\DeclareMathOperator{\Spec}{Spec}
\DeclareMathOperator{\ProjSym}{ProjSym}
\DeclareMathOperator{\Pic}{Pic}
\DeclareMathOperator{\im}{im}
\DeclareMathOperator{\mult}{mult}
\DeclareMathOperator{\rank}{rank}
\DeclareMathOperator{\Fr}{Fr}
\DeclareMathOperator{\Mat}{Mat}
\DeclareMathOperator{\Sht}{Sht}
\DeclareMathOperator{\all}{all}
\DeclareMathOperator{\I}{I}
\DeclareMathOperator{\II}{II}
\DeclareMathOperator{\Id}{Id}
\DeclareMathOperator{\Isom}{Isom}
\DeclareMathOperator{\Aut}{Aut}
\DeclareMathOperator{\SHom}{\mathscr{H}\kern -2pt \textit{om}}
\DeclareMathOperator{\ArtAlg}{ArtAlg}
\DeclareMathOperator{\Sets}{Sets}
\DeclareMathOperator{\bcirc}{\mathbin{\circ}}
\DeclareMathOperator{\simto}{\xrightarrow{\sim}}
\theoremstyle{definition}
\declaretheorem[parent=subsection]{Definition}
\theoremstyle{remark}
\declaretheorem[sibling=Definition]{Remark}
\theoremstyle{plain}
\declaretheorem[sibling=Definition]{Theorem}
\declaretheorem[sibling=Definition]{Lemma}
\declaretheorem[sibling=Definition]{Proposition}
\declaretheorem[sibling=Definition]{Corollary}
\crefname{Definition}{Definition}{Definitions}
\crefname{Remark}{Remark}{Remarks}
\crefname{Example}{Example}{Examples}
\crefname{Lemma}{Lemma}{Lemmas}
\crefname{Theorem}{Theorem}{Theorems}
\crefname{Proposition}{Proposition}{Propositions}
\crefname{Corollary}{Corollary}{Corollaries}
\crefname{Conjecture}{Conjecture}{Conjectures}
\crefname{Section}{Section}{Sections}
\crefname{Chapter}{Chapter}{Chapters}
\numberwithin{equation}{section}
\title[Rational functions on the moduli stack of Drinfeld shtukas]{Construction of certain rational functions on the moduli stack of Drinfeld shtukas}
\author{Zhiyuan Ding}
\begin{document}
\begin{abstract}
  We construct certain rational functions (modular units) on the moduli stack of Drinfeld shtukas. The divisors of these rational functions are supported on horospherical divisors of the moduli stack. The key to our construction is a vanishing theorem for shtukas with zeros and poles satisfying certain conditions. Using deformation theory, we calculate the divisors of these rational functions.
\end{abstract}

\maketitle

\section{Introduction}
\subsection{Conventions and notation}
The following conventions and notation will be used throughout the article.

Fix a prime number $p$ and fix a finite field $\mathbb{F}_q$ of characteristic $p$ with $q$ elements.

Let $X$ be a smooth projective geometrically connected curve over $\mathbb{F}_q$. Let $k$ be the field of rational functions on $X$. Let $\mathbb{A}$ be the ring of adeles of $k$. Let $O$ be the subring of integral adeles in $\mathbb{A}$.

For any scheme $S$ over $\mathbb{F}_q$, denote $\Phi_S=\Id_X\times \Fr_S:X\times S\to X\times S$, and let $\pi_S:X\times S\to S$ be the projection. We sometimes write $\Phi$ and $\pi$ instead of $\Phi_S$ and $\pi_S$ when there is no ambiguity about $S$.

For any scheme $S$ over $\mathbb{F}_q$, we denote $\Fr_S$ to be its Frobenius endomorphism relative to $\mathbb{F}_q$. For two schemes $S_1$ and $S_2$ over $\mathbb{F}_q$, $S_1\times S_2$ denotes the fiber product of $S_1$ and $S_2$ over $\mathbb{F}_q$, and by a morphism $S_1\to S_2$ we mean a morphism over $\mathbb{F}_q$.

For a scheme $S$ over $\mathbb{F}_q$ and two morphisms $\alpha,\beta:S\to X$, we say that they satisfy condition (\ref{Frobenius shifts of zero and pole mutually disjoint}) if
\begin{equation}\label{Frobenius shifts of zero and pole mutually disjoint}
\text{$\Gamma_{\Fr_X^i\bcirc\alpha},\Gamma_{\Fr_X^j\bcirc\beta},(i,j\in \mathbb{Z}_{\ge 0})$ are mutually disjoint subsets of $X\times S$.} \tag{$\ast$}
\end{equation}
We say that they satisfy condition (\ref{zero and pole not related by Frobenius}) if
\begin{equation}\label{zero and pole not related by Frobenius}
\text{$\Gamma_{\Fr_X^i\bcirc\alpha}\cap\Gamma_{\Fr_X^j\bcirc\beta}=\emptyset$ for all $i,j\in \mathbb{Z}_{\ge 0}$.} \tag{$+$}
\end{equation}
Condition (\ref{Frobenius shifts of zero and pole mutually disjoint}) is equivalent to the combination of (\ref{zero and pole not related by Frobenius}) and the condition that $\alpha$ and $\beta$ map $S$ to the generic point of $X$.

For a morphism $\alpha:S\to X$, we sometimes write $\alpha$ instead of $\Gamma_\alpha$.

\subsection{Background and motivation}
The notion of a shtuka (originally called an $F$-sheaf) was defined by Drinfeld in~\cite{Dr78,Drinfeld1}, and was used to prove the Langlands correspondence over function fields by Drinfeld in the case of $GL_2$ and by L. Lafforgue in the case of $GL_d$ for arbitrary $d\ge 2$. More recently, V. Lafforgue proved the automorphic-to-Galois direction of the Langlands correspondence for arbitrary reductive groups over function fields using a more general notion of shtukas.

In Section 6 of~\cite{Drinfeld3}, Drinfeld calculated the kernel of the intersection pairing for horospherical divisors of the (compactified) moduli scheme of shtukas of rank 2 with all level structures. Since $H^1$ of the moduli scheme vanishes as shown in~\cite{Drinfeld2}, this kernel equals the space of those horospherical divisors that are principal up to torsion. Its description can be thought of as a function field analogue of the Manin-Drinfeld theorem on cusps of modular curves. In Remark (2) of Section 6 of~\cite{Drinfeld3}, Drinfeld asked to explicitly construct the rational functions corresponding to those principal horospherical divisors.

In this article we construct certain rational functions on the moduli stack of Drinfeld shtukas of rank $\ge 2$. The divisors of these rational functions are supported on the horospherical divisors. The approach via toy shtukas in~\cite{Ding18} provides more principal horospherical divisors than the ones constructed here.

\subsection{Contents}
We give a review of Drinfeld shtukas in Section 1. In particular, we recall the notion of partial Frobeniuses, which is important in the theory of shtukas and will be used extensively throughout the article.

In Section 2, we introduce the notion of reducible shtukas. \cref{maximal trivial sub and maximal trivial quotient} plays an important role in our study of cohomology of shtukas.

The key to our construction of rational functions is given in Section 3. We prove several vanishing results (\cref{cohomology of shtukas over a field}, \cref{h^0 and h^1 of a shtuka} and \cref{cohomology of irreducible shtukas}) for shtukas over a field with zeros and poles satisfying condition (\ref{zero and pole not related by Frobenius}). In particular, if such a shtuka is irreducible, the dimension of its cohomology behaves as if the shtuka corresponds to a Drinfeld module. (See the remark following Proposition 3 in~\cite{Dr77} for the case of Drinfeld modules.)

In Section 4, we study the deformation theory of shtukas as a preparation for the next two sections. With the notion of toy shtukas introduced in~\cite{Ding18}, deformation theory is no longer necessary for our main result. But we  hope that it is interesting of its own right.

Sections 5 and 6 are devoted to the proof of our main result \cref{divisor of determinant of cohomology of shtukas}, which describes the determinant of cohomology of the universal shtuka over the moduli stack of shtukas. In Section 5, we use deformation theory to calculate the cohomology of shtukas over formal neighborhoods of a point on the moduli stack. In Section 6, we use the result in Section 5 to obtain the multiplicities of cohomology of shtukas at horospherical divisors.

In Section 7, we construct some rational functions on the moduli scheme of shtukas  with all level structures. The existence of these rational functions follows from \cref{cohomology of irreducible shtukas}. With \cref{divisor of determinant of cohomology of shtukas}, we are able to determine the divisors of these rational functions, and therefore explicitly exhibit some principal horospherical divisors of the moduli scheme.

\subsection{Acknowledgements}
The research was partially supported by NSF grant DMS-1303100. I would like to express my deep gratitude to my advisor Vladimir Drinfeld for his continual guidance and support. I thank Alexander Beilinson and Madhav Nori for helpful discussions.

\section{Review of Drinfeld shtukas}
\subsection{Definition of Drinfeld shtukas}
Let $S$ be a scheme over $\mathbb{F}_q$. Denote $\Phi=\Id_X\times\Fr_S:X\times S\to X\times S$.
\begin{Definition}[Drinfeld]
  A \emph{left shtuka} of rank $d$ over $S$ is a diagram
  \[\Phi^*\mathscr{F}\xhookleftarrow{j}\mathscr{F}'\xhookrightarrow{i}\mathscr{F}\]
  of locally free sheaves of rank $d$ on $X\times S$, where $i$ and $j$ are  injective morphisms, the cokernel of $i$ is an invertible sheaf on the graph $\Gamma_\alpha$ of some morphism $\alpha:S\to X$, the cokernel of $j$ is an invertible sheaf on the graph $\Gamma_\beta$ of some morphism $\beta:S\to X$.

  A \emph{right shtuka} of rank $d$ over $S$ is a diagram
  \[\Phi^*\mathscr{F}\xhookrightarrow{f}\mathscr{F}'\xhookleftarrow{g}\mathscr{F}\]
  of locally free sheaves of rank $d$ on $X\times S$, where $f$ and $g$ are  injective morphisms, the cokernel of $f$ is an invertible sheaf on the graph $\Gamma_\alpha$ of some morphism $\alpha:S\to X$, the cokernel of $g$ is an invertible sheaf on the graph $\Gamma_\beta$ of some morphism $\beta:S\to X$.

  We say that $\alpha$ is the zero of the shtuka and $\beta$ is the pole of the shtuka.
\end{Definition}

\begin{Definition}[Drinfeld]
For two morphisms $\alpha,\beta:S\to X$, a \emph{shtuka} of rank $d$ over $S$ with zero $\alpha$ and pole $\beta$ is a locally free sheaf $\mathscr{F}$ of rank $d$ on $X\times S$ equipped with an injective morphism $\Phi^*\mathscr{F}\to\mathscr{F}(\Gamma_\beta)$ inducing an isomorphism $\Phi^*\det\mathscr{F}\xrightarrow{\sim}(\det\mathscr{F})(\Gamma_\beta-\Gamma_\alpha)$, such that the image of the composition $\Phi^*\mathscr{F}\to\mathscr{F}(\Gamma_\beta)\to\mathscr{F}(\Gamma_\beta)/\mathscr{F}$ has rank at most 1 at $\Gamma_\beta$.
\end{Definition}

\begin{Remark}
  When the zero and the pole have disjoint graph, Construction A in \cref{(Section)general constructions for shtukas} shows that there is no difference between a shtuka, a left shtuka and a right shtuka. This remains true after applying any powers of partial Frobeniuses (see \cref{Section: partial Frobenius}) if we impose condition (\ref{zero and pole not related by Frobenius}) on the zero and the pole.

  From now on, when the zero and the pole satisfy condition (\ref{zero and pole not related by Frobenius}), we do not distinguish left and right shtukas, and simply call them shtukas.
\end{Remark}

\begin{Lemma}
  Let $\mathscr{M}$ be a quasi-coherent sheaf on $X$ and let $\widetilde{\mathscr{M}}$ be the pullback of $\mathscr{M}$ under the projection $X\times S\to X$. Then we have a canonical isomorphism $\Phi^*\widetilde{\mathscr{M}}\cong\widetilde{\mathscr{M}}$. \qed
\end{Lemma}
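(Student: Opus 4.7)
The plan is to reduce the statement to the elementary identity $\pr \circ \Phi = \pr$, where $\pr:X\times S\to X$ is the first projection, and then invoke the functoriality of pullback.

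First I would unpack the definitions: by construction $\widetilde{\mathscr{M}}=\pr^*\mathscr{M}$, and $\Phi=\Id_X\times\Fr_S$. Since the Frobenius of $S$ acts only on the $S$-factor, composing $\Phi$ with the projection to $X$ gives back the projection to $X$; explicitly, $\pr\circ\Phi=\pr\circ(\Id_X\times\Fr_S)=\Id_X\circ\pr=\pr$. Diagrammatically,
\[
\begin{tikzcd}
X\times S \arrow[r,"\Phi"] \arrow[dr,"\pr"'] & X\times S \arrow[d,"\pr"] \\
& X
\end{tikzcd}
\]
commutes.

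Next I would apply the canonical isomorphism $(g\circ f)^*\mathscr{M}\simto f^*g^*\mathscr{M}$ associated to a composition of morphisms of schemes. Taking $f=\Phi$ and $g=\pr$, this yields
\[
\Phi^*\widetilde{\mathscr{M}} \;=\; \Phi^*\pr^*\mathscr{M} \;\simto\; (\pr\circ\Phi)^*\mathscr{M} \;=\; \pr^*\mathscr{M} \;=\; \widetilde{\mathscr{M}},
\]
which is the desired canonical isomorphism. Since each arrow in the chain is the canonical one supplied by the pseudofunctoriality of pullback, the composite isomorphism is canonical as claimed.

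There is no real obstacle here; the only minor point to check is that the isomorphism deserves to be called \emph{canonical}, which follows because it is assembled entirely from the universal comparison isomorphisms for pullbacks along compositions, and these are natural in $\mathscr{M}$. This is why no additional hypothesis on $S$ or on $\mathscr{M}$ beyond quasi-coherence is needed.
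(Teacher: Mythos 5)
Your proof is correct and is precisely the immediate argument the paper has in mind when it marks the lemma with a bare \qed: the identity $\pr\bcirc\Phi=\pr$ together with the canonical isomorphism for pullback along a composition. Nothing more needs to be said.
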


\begin{Definition}[Drinfeld]
Let $\mathscr{F}$ be a shtuka (resp. a left shtuka, resp. a right shtuka) of rank $d$ over $S$ with zero $\alpha$ and pole $\beta$. Let $D$ be a finite subscheme of $X$ such that $\alpha$ and $\beta$ map to $X-D$. A structure of level $D$ on $\mathscr{F}$ is an isomorphism $\iota:\mathscr{F}\otimes\mathscr{O}_{D\times S}\xrightarrow{\sim}\mathscr{O}_{D\times S}^d$ such that the following diagram commutes:
\[\begin{tikzcd}
  \mathscr{F}\otimes\mathscr{O}_{D\times S}\arrow[r,"\iota","\sim"']&\mathscr{O}_{D\times S}^d\\
  \Phi^*\mathscr{F}\otimes\mathscr{O}_{D\times S}\arrow[r,"\Phi^*\iota","\sim"']\arrow[u,"\sim"]&\Phi^*\mathscr{O}_{D\times S}^d\arrow[u,"\sim"]
\end{tikzcd}\]
\end{Definition}

\subsection{General constructions for shtukas}\label{(Section)general constructions for shtukas}
We have the following constructions for shtukas, which induce morphisms between moduli stacks of shtukas.

\textbf{Construction A:}

(i)  Let $\Phi^*\mathscr{G}\xhookleftarrow{j}\mathscr{F}\xhookrightarrow{i}\mathscr{G}$ be a left shtuka with zero $\alpha$ and pole $\beta$ such that $\Gamma_\alpha\cap\Gamma_\beta=\emptyset$. We form the pushout diagram
\[\begin{tikzcd}
    \mathscr{G}\arrow{r}{g}&\mathscr{H}\\
    \mathscr{F}\arrow{r}{j}\arrow{u}{i}&\Phi^*\mathscr{G}\arrow{u}{f}
\end{tikzcd}\]
Then $\Phi^*\mathscr{G}\xhookrightarrow{f}\mathscr{H}\xhookleftarrow{g}\mathscr{G}$ is a right shtuka of the same rank, with the same zero and pole.

(ii)  Let $\Phi^*\mathscr{G}\xhookrightarrow{f}\mathscr{F}\xhookleftarrow{g}\mathscr{G}$ be a right shtuka with zero $\alpha$ and pole $\beta$ such that $\Gamma_\alpha\cap\Gamma_\beta=\emptyset$. We form the pullback diagram
\[\begin{tikzcd}
    \mathscr{G}\arrow{r}{g}&\mathscr{F}\\
    \mathscr{H}\arrow{r}{j}\arrow{u}{i}&\Phi^*\mathscr{G}\arrow{u}{f}
\end{tikzcd}\]
Then $\Phi^*\mathscr{G}\xhookleftarrow{j}\mathscr{H}\xhookrightarrow{i}\mathscr{G}$ is a left shtuka of the same rank, with the same zero and pole.

\textbf{Construction B:}

(i) For a left shtuka $\Phi^*\mathscr{G}\xhookleftarrow{j}\mathscr{F}\xhookrightarrow{i}\mathscr{G}$ with zero $\alpha$ and pole $\beta$, we can construct a right shtuka $\Phi^*\mathscr{F}\xhookrightarrow{\Phi^*i}\Phi^*\mathscr{G}\xhookleftarrow{j}\mathscr{F}$ of the same rank, with zero $\Fr_X\bcirc\alpha$ and pole $\beta$.

(ii) For a right shtuka $\Phi^*\mathscr{F}\xhookrightarrow{f}\mathscr{G}\xhookleftarrow{g}\mathscr{F}$ with zero $\alpha$ and pole $\beta$, we can construct a left shtuka $\Phi^*\mathscr{G}\xhookleftarrow{\Phi^*g}\Phi^*\mathscr{F}\xhookrightarrow{f}\mathscr{G}$ of the same rank, with zero $\alpha$ and pole $\Fr_X\bcirc\beta$.

\textbf{Construction C:}
For a shtuka $\mathscr{F}$ with zero $\alpha$ and pole $\beta$ satisfying $\Gamma_\alpha\cap\Gamma_\beta=\emptyset$, its dual $\mathscr{F}^\vee$ is a shtuka of the same rank, with zero $\beta$ and pole $\alpha$.

\textbf{Construction D:}
Let $\mathscr{F}$ be a shtuka over $S$. Let $\mathscr{L}$ be an invertible sheaf on $X$, and let $\widetilde{\mathscr{L}}$ be the pullback of $\mathscr{L}$ under the projection $X\times S\to X$. then $\mathscr{F}\otimes\widetilde{\mathscr{L}}$ is a shtuka of the same rank, with the same zero and pole.

\textbf{Construction D':}
Suppose in Construction D the shtuka $\mathscr{F}$ is equipped with a structure of level $D$ and the invertible sheaf $\mathscr{L}$ is trivialized at $D$. Then the shtuka $\mathscr{F}\otimes\widetilde{\mathscr{L}}$ is naturally equipped with a structure of level $D$.

\textbf{Construction E:}
Let $\mathscr{F}$ be a shtuka of rank $d$ equipped with a structure of level $D$. Suppose we are given an $\mathscr{O}_D$-submodule $\mathscr{R}\subset \mathscr{O}_D^d$. Let $\mathscr{F}'$ be the kernel of the composition $\mathscr{F}\to\mathscr{F}\otimes_{\mathscr{O}_{X\times S}}\mathscr{O}_{D\times S}\xrightarrow{\sim}\mathscr{O}_{D\times S}^d\to\mathscr{O}_{D\times S}^d/(\mathscr{R}\boxtimes\mathscr{O}_S)$. Then $\mathscr{F}'$ is a shtuka of the same rank, with the same zero and pole.

\textbf{Construction E':}
Suppose in Construction E we are given a surjective morphism of $\mathscr{O}_D$-modules $\mathscr{R}\to\mathscr{O}_{D'}^d$, where $D'$ is a subscheme of $D$. Then the composition $\mathscr{F}'\to\mathscr{R}\boxtimes\mathscr{O}_S\to\mathscr{O}_{D'\times S}^d$ defines a structure of level $D'$ on $\mathscr{F}'$.

\subsection{Partial Frobenius}\label{Section: partial Frobenius}
\begin{Definition}
Let $F_1$ be the construction of first applying A(ii) and then applying B(i). Let $F_2$ be the construction of first applying B(ii) and then applying A(i). They are called partial Frobenius.
\end{Definition}

\begin{Proposition}
For a shtuka $\mathscr{F}$ over $S$ with zero and pole satisfying condition (\ref{zero and pole not related by Frobenius}), we have natural isomorphisms $F_1F_2\mathscr{F}\cong F_2F_1\mathscr{F}\cong\Fr_S^*\mathscr{F}$. \qed
\end{Proposition}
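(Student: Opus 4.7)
The plan is to represent $\mathscr{F}$ as a right shtuka $\Phi^*\mathscr{G}\xhookrightarrow{f}\mathscr{F}\xhookleftarrow{g}\mathscr{G}$ (legitimate by the remark preceding this proposition, since condition (\ref{zero and pole not related by Frobenius}) in particular implies $\Gamma_\alpha\cap\Gamma_\beta=\emptyset$) and to compute both $F_1F_2\mathscr{F}$ and $F_2F_1\mathscr{F}$ explicitly in terms of the data $(\mathscr{G},\mathscr{F},f,g)$. The proof then rests on two structural observations: (a) the A-constructions are mutually inverse up to canonical isomorphism — forming the pushout and then the pullback of the two coherence arrows returns the original middle sheaf, because $f,g$ are injections whose cokernels are supported on the disjoint closed subschemes $\Gamma_\alpha,\Gamma_\beta$; (b) the B-constructions commute with pullback along $\Phi=\Id_X\times\Fr_S$ in the most naive sense, since they are defined by applying $\Phi^*$ to one of the two arrows.

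For $F_1F_2$, construction B(ii) turns the right shtuka into the left shtuka $\Phi^*\mathscr{F}\xhookleftarrow{\Phi^*g}\Phi^*\mathscr{G}\xhookrightarrow{f}\mathscr{F}$, and A(i) then forms the pushout $\mathscr{K}$ of $\Phi^*g$ and $f$ to produce a right shtuka with zero $\alpha$ and pole $\Fr_X\bcirc\beta$; these remain disjoint thanks to condition (\ref{zero and pole not related by Frobenius}), so A(ii) is available. Applying A(ii) to the pushout $\mathscr{K}$ recovers, by observation (a), the same left shtuka $\Phi^*\mathscr{F}\xhookleftarrow{\Phi^*g}\Phi^*\mathscr{G}\xhookrightarrow{f}\mathscr{F}$; then B(i) replaces $f$ by $\Phi^*f$, yielding the right shtuka
\[\Phi^*(\Phi^*\mathscr{G})\xhookrightarrow{\Phi^*f}\Phi^*\mathscr{F}\xhookleftarrow{\Phi^*g}\Phi^*\mathscr{G},\]
which is precisely $\Phi^*$ applied term-by-term to the original right shtuka. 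Symmetrically, for $F_2F_1$: A(ii) forms the fiber product $\mathscr{H}=\Phi^*\mathscr{G}\times_\mathscr{F}\mathscr{G}$ and B(i) replaces $i:\mathscr{H}\to\mathscr{G}$ by $\Phi^*i$; then B(ii) converts this right shtuka into a left shtuka whose two maps out of $\Phi^*\mathscr{H}$ are $\Phi^*i$ and $\Phi^*j$; and A(i) takes the pushout of $\Phi^*i,\Phi^*j$, which by exactness of $\Phi^*$ equals $\Phi^*$ of the pushout of $i,j$, which by observation (a) is $\Phi^*\mathscr{F}$. The result is the same diagram as above.

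Comparing with $\Fr_S^*\mathscr{F}=\Phi^*\mathscr{F}$: pulling back the original right shtuka along $\Phi$ produces exactly the diagram $\Phi^*(\Phi^*\mathscr{G})\xhookrightarrow{\Phi^*f}\Phi^*\mathscr{F}\xhookleftarrow{\Phi^*g}\Phi^*\mathscr{G}$, and since morphisms of $\mathbb{F}_q$-schemes commute with Frobenius we have $\alpha\bcirc\Fr_S=\Fr_X\bcirc\alpha$ and similarly for $\beta$, so the zero and pole match. Naturality in $\mathscr{F}$ of each canonical isomorphism used above is routine. The main obstacle is purely notational: one must keep careful track, at each of the four steps of $F_iF_j$, of which sheaf plays the role of the ``main'' sheaf versus the ``middle'' sheaf, and verify at each application of A or B that the zero/pole configuration is still compatible with that construction — all of which follows from condition (\ref{zero and pole not related by Frobenius}) applied to the shifts generated along the way.
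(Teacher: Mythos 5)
Your proof is correct and is exactly the direct unwinding of the definitions that the paper leaves implicit (the proposition is stated with a \qed, i.e.\ with the proof regarded as routine from Constructions A and B). The two structural observations you isolate — that A(i) and A(ii) are mutually inverse pushout/pullback constructions on the middle sheaf whenever the cokernels of the two arrows are supported on disjoint graphs, and that B(i)/B(ii) simply apply $\Phi^*$ to one arrow of the diagram — are precisely what is needed, and your bookkeeping of zeros and poles (using condition $(+)$ at each step to justify that the relevant A-construction is available) is sound.

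One small imprecision worth noting: where you invoke ``exactness of $\Phi^*$'' to commute $\Phi^*$ past the pushout, what is actually being used is that a pullback functor is right exact (commutes with colimits, in particular with cokernels and hence with pushouts of injections); $\Phi^* = (\Id_X\times\Fr_S)^*$ is not left exact in general since $\Fr_S$ need not be flat. The injectivity of $\Phi^*i$ and $\Phi^*j$ that you need is a separate fact, but it is already built into the paper's Construction B, so nothing is missing — only the stated reason should be sharpened.
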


\begin{Remark}
  If a shtuka $\mathscr{F}$ over $S$ has zero $\alpha$ and pole $\beta$, then $F_1\mathscr{F}$ has zero $\alpha\bcirc\Fr_S=\Fr_X\bcirc\alpha$ and pole $\beta$, and $F_2\mathscr{F}$ has zero $\alpha$ and pole $\beta\bcirc\Fr_S=\Fr_X\bcirc\beta$.
\end{Remark}

\begin{Lemma}\label{inverse of partial Frobenius}
  Let $E$ be a perfect field over $\mathbb{F}_q$. Let $\mathscr{F}$ be a shtuka over $\Spec E$ with zero $\alpha$ and pole $\beta$ satisfying condition (\ref{zero and pole not related by Frobenius}). Then there is a unique subsheaf $\mathscr{H}\subset\mathscr{F}$ satisfying the following conditions.

  (i) The image of $\Phi^*\mathscr{H}$ in $\mathscr{F}(\beta)$ is contained in $\mathscr{F}\cap\mathscr{H}(\beta\bcirc\Fr_E^{-1})$.

  (ii) The sheaf $\mathscr{H}$ equipped with the morphism $\Phi^*\mathscr{H}\to\mathscr{H}(\beta\bcirc\Fr_E^{-1})$ is a shtuka over $\Spec E$ with zero $\alpha$ and pole $\beta\bcirc\Fr_E^{-1}$.

  (iii) Applying partial Frobenius $F_2$ to $\mathscr{H}$, we get the original shtuka $\mathscr{F}$.
\end{Lemma}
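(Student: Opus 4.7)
The plan is to invert the partial Frobenius $F_2$ by exploiting the identity $F_2 F_1\cong\Fr_S^{*}$ together with the fact that $E$ is perfect, so that $\Phi=\Id_X\times\Fr_E$ is an automorphism of $X\times\Spec E$ and $(\Phi^{-1})^{*}$ is available. Concretely, set $\mathscr{F}_1:=(\Phi^{-1})^{*}\mathscr{F}$. Pulling the shtuka morphism $\phi:\Phi^{*}\mathscr{F}\hookrightarrow\mathscr{F}(\beta)$ back along $\Phi^{-1}$, and using the identifications $\Phi^{*}\mathscr{F}_1=\mathscr{F}$ and $(\Phi^{-1})^{*}\mathscr{O}(\Gamma_\beta)=\mathscr{O}(\Gamma_{\beta\bcirc\Fr_E^{-1}})$, yields an injection $\psi:\mathscr{F}\hookrightarrow\mathscr{F}_1(\beta\bcirc\Fr_E^{-1})$ that endows $\mathscr{F}_1$ with the structure of a shtuka with zero $\alpha\bcirc\Fr_E^{-1}$ and pole $\beta\bcirc\Fr_E^{-1}$. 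Condition (\ref{zero and pole not related by Frobenius}) for $\mathscr{F}_1$ follows from condition (\ref{zero and pole not related by Frobenius}) for $\mathscr{F}$, since the latter is invariant under simultaneous shifts of both graphs by the automorphism $\Phi$.

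Now define $\mathscr{H}:=F_1\mathscr{F}_1$. Unwinding the recipe for $F_1$ (Construction $A(ii)$ followed by $B(i)$), the underlying sheaf of $\mathscr{H}$ is identified with $\mathscr{F}_1\cap\psi(\mathscr{F})$ inside $\mathscr{F}_1(\beta\bcirc\Fr_E^{-1})$; equivalently, $\mathscr{H}=\psi^{-1}(\mathscr{F}_1)$ as a subsheaf of $\mathscr{F}$. The resulting shtuka has zero $\Fr_X\bcirc(\alpha\bcirc\Fr_E^{-1})=\alpha$ and pole $\beta\bcirc\Fr_E^{-1}$, as required. For condition (i), applying $\Phi^{*}$ to the inclusion $\mathscr{H}\subset\mathscr{F}_1$ gives $\Phi^{*}\mathscr{H}\subset\Phi^{*}\mathscr{F}_1=\mathscr{F}$, which is precisely the image of $\Phi^{*}\mathscr{H}$ in $\mathscr{F}(\beta)$ under $\phi$; and $\mathscr{F}\subset\mathscr{H}(\beta\bcirc\Fr_E^{-1})$ because $\mathscr{F}/\mathscr{H}$ is an invertible sheaf on $\Gamma_{\beta\bcirc\Fr_E^{-1}}$. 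Condition (ii) then records exactly the shtuka structure inherited from $F_1\mathscr{F}_1$. For (iii), the Proposition on partial Frobeniuses yields $F_2\mathscr{H}=F_2F_1\mathscr{F}_1\cong\Fr_S^{*}\mathscr{F}_1=\Phi^{*}(\Phi^{-1})^{*}\mathscr{F}=\mathscr{F}$.

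For uniqueness, let $\mathscr{H}'\subset\mathscr{F}$ also satisfy (i)--(iii). Condition (i) asserts that $\phi(\Phi^{*}\mathscr{H}')\subset\mathscr{F}$; pulling back by $\Phi^{-1}$ this becomes $\psi(\mathscr{H}')\subset\mathscr{F}_1$, which forces $\mathscr{H}'\subset\psi^{-1}(\mathscr{F}_1)=\mathscr{H}$. Condition (iii) says that $\mathscr{F}$ is the middle sheaf of the right-shtuka view of $\mathscr{H}'$, so $\mathscr{F}/\mathscr{H}'$ is an invertible sheaf on $\Gamma_{\beta\bcirc\Fr_E^{-1}}$ of the same length as $\mathscr{F}/\mathscr{H}$. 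Combined with $\mathscr{H}'\subset\mathscr{H}\subset\mathscr{F}$, this forces $\mathscr{H}'=\mathscr{H}$.

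The main delicate point is the bookkeeping of which sheaves embed in which ambient sheaf; in particular, the intersection ``$\mathscr{F}\cap\mathscr{H}(\beta\bcirc\Fr_E^{-1})$'' acquires meaning only through the inclusion $\mathscr{F}\subset\mathscr{H}(\beta\bcirc\Fr_E^{-1})$ established above, and one must carefully match the shtuka structure on $\mathscr{H}$ produced by $F_1\mathscr{F}_1$ with the one described abstractly by (i)--(ii). Conceptually, the lemma is the statement that $F_2$ becomes an invertible functor once $\Phi$ is invertible, with explicit inverse $F_1\circ(\Phi^{-1})^{*}$.
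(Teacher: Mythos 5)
Your proof is correct, and it is in substance the same as the paper's. The paper's one-line argument is that conditions (i)--(iii) together pin $\mathscr{H}$ down by the requirement that the image of $\Phi^*\mathscr{H}$ in $\mathscr{F}(\beta)$ equal $\Phi^*\mathscr{F}\cap\mathscr{F}$, and perfectness of $E$ lets one solve this for $\mathscr{H}$ by applying $(\Phi^{-1})^*$. Your construction $\mathscr{H}=F_1\bigl((\Phi^{-1})^*\mathscr{F}\bigr)=\mathscr{F}\cap(\Phi^{-1})^*\mathscr{F}$ is precisely $(\Phi^{-1})^*\bigl(\Phi^*\mathscr{F}\cap\mathscr{F}\bigr)$, so you are exhibiting the same subsheaf, just built up explicitly through the $F_1$ construction and with the verification of (i)--(iii) and uniqueness spelled out in detail rather than compressed into the single equivalence the paper asserts.
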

\begin{proof}
  A subsheaf $\mathscr{H}\subset\mathscr{F}$ satisfies condition (i), (ii) and (iii) if and only if the image of $\Phi^*\mathscr{H}$ in $\mathscr{F}(\beta)$ equals $\Phi^*\mathscr{F}\cap\mathscr{F}$. Since $E$ is perfect, such $\mathscr{H}$ uniquely exists.
\end{proof}

\begin{Remark}\label{all integer powers of partial Frobenius}
For a shtuka $\mathscr{F}$ over a perfect field with zero and pole satisfying condition (\ref{zero and pole not related by Frobenius}), \cref{inverse of partial Frobenius} allows us to define its partial Frobeniuses $(F_2)^i\mathscr{F}$ for all $i\in \mathbb{Z}$. We have $\chi((F_2)^i\mathscr{F})=\chi(\mathscr{F})+i$ for all $i\in \mathbb{Z}$.
\end{Remark}

\section{Reducible shtukas over a field}
In this section, we fix a field $E$ over $\mathbb{F}_q$.

\subsection{Definitions}
\begin{Definition}\label{definition of reducible shtukas}
A shtuka $\mathscr{F}$ over $\Spec E$ of rank $d$ with zero $\alpha$ and pole $\beta$ is said to be \emph{reducible} if $\mathscr{F}$ contains a nonzero subsheaf $\mathscr{E}$ of rank $<d$ such that the image of $\Phi^*\mathscr{E}$ in $\mathscr{F}(\beta)$ is contained in $\mathscr{E}(\beta)$.
\end{Definition}

\begin{Lemma}\label{saturation of a Frobenius stable subsheaf}
  With the same notation as in \cref{definition of reducible shtukas}, if $\mathscr{G}$ is the saturation of $\mathscr{E}$ in $\mathscr{F}$, then the image of $\Phi^*\mathscr{G}$ in $\mathscr{F}(\beta)$ is contained in $\mathscr{G}(\beta)$, $(\Phi^*\mathscr{G}+\mathscr{G})/\mathscr{G}$ is a torsion sheaf at $\beta$ of length at most one, and $(\Phi^*\mathscr{G}+\mathscr{G})/\Phi^*\mathscr{G}$ is a torsion sheaf at $\alpha$ of length at most one.
\end{Lemma}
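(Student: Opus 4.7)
The plan is to work inside $\mathscr{F}(\beta)$, exploiting that $\mathscr{F}/\mathscr{G}$ is locally free on $X\times\Spec E$ (by the definition of saturation together with regularity of $X\times\Spec E$) and hence its pullback $\Phi^*\mathscr{F}/\Phi^*\mathscr{G}$ is also locally free (since $\Phi$ is flat). A torsion subsheaf of such a sheaf vanishes, and this one principle will drive the whole argument.

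First I would show that $\Phi^*\mathscr{G}$ maps into $\mathscr{G}(\beta)$. The composition $\Phi^*\mathscr{G}\to\mathscr{F}(\beta)\to(\mathscr{F}/\mathscr{G})(\beta)$ kills $\Phi^*\mathscr{E}$ by the hypothesis in \cref{definition of reducible shtukas}, so it factors through the torsion sheaf $\Phi^*\mathscr{G}/\Phi^*\mathscr{E}\cong\Phi^*(\mathscr{G}/\mathscr{E})$; the torsion-free target then forces it to vanish.

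Next I would establish two intersection identities inside $\mathscr{F}(\beta)$: $\mathscr{G}(\beta)\cap\mathscr{F}=\mathscr{G}$ and $\mathscr{G}(\beta)\cap\Phi^*\mathscr{F}=\Phi^*\mathscr{G}$. Each follows from the same dichotomy: the relevant quotient is at once a subsheaf of a torsion sheaf ($\mathscr{G}(\beta)/\mathscr{G}$ or $\mathscr{G}(\beta)/\Phi^*\mathscr{G}$) and of a torsion-free sheaf ($\mathscr{F}/\mathscr{G}$ or $\Phi^*\mathscr{F}/\Phi^*\mathscr{G}$), hence zero. From the first identity and $\Phi^*\mathscr{G}+\mathscr{G}\subset\mathscr{G}(\beta)$, one gets $(\Phi^*\mathscr{G}+\mathscr{G})\cap\mathscr{F}=\mathscr{G}$; for the analogous statement with $\Phi^*\mathscr{F}$, given $x=a+b\in(\Phi^*\mathscr{G}+\mathscr{G})\cap\Phi^*\mathscr{F}$ with $a\in\Phi^*\mathscr{G}$ and $b\in\mathscr{G}$, one observes $b=x-a\in\mathscr{G}\cap\Phi^*\mathscr{F}\subset\mathscr{G}(\beta)\cap\Phi^*\mathscr{F}=\Phi^*\mathscr{G}$, forcing $x\in\Phi^*\mathscr{G}$.

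These identities give injections
\[(\Phi^*\mathscr{G}+\mathscr{G})/\mathscr{G}\hookrightarrow(\Phi^*\mathscr{F}+\mathscr{F})/\mathscr{F}\quad\text{and}\quad(\Phi^*\mathscr{G}+\mathscr{G})/\Phi^*\mathscr{G}\hookrightarrow(\Phi^*\mathscr{F}+\mathscr{F})/\Phi^*\mathscr{F}.\]
The first target, being the image of $\Phi^*\mathscr{F}$ in $\mathscr{F}(\beta)/\mathscr{F}$, has rank at most one at $\beta$ directly from the definition of a shtuka. The second target is canonically $\mathscr{F}/(\mathscr{F}\cap\Phi^*\mathscr{F})$; the determinant condition $\Phi^*\det\mathscr{F}\xrightarrow{\sim}(\det\mathscr{F})(\beta-\alpha)$ of the shtuka definition (equivalently, the left-shtuka presentation of Construction~A in \cref{(Section)general constructions for shtukas}) shows it is supported on $\alpha$ with rank at most one there. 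This yields both length bounds. The most delicate point of the argument is the second intersection identity, which visibly requires the output of the first step in order to make sense of the inclusion $\Phi^*\mathscr{G}\subset\mathscr{G}(\beta)\cap\Phi^*\mathscr{F}$.
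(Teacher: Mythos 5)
Your proof is correct and follows essentially the same route as the paper's. Both rest on the single key fact that $\mathscr{F}/\mathscr{G}$ is locally free (hence torsion-free, as is $(\mathscr{F}/\mathscr{G})(\beta)$ and its flat pullback under $\Phi$), which forces $\Phi^*\mathscr{G}\subset\mathscr{G}(\beta)$, and then the two quotients $(\Phi^*\mathscr{G}+\mathscr{G})/\mathscr{G}$ and $(\Phi^*\mathscr{G}+\mathscr{G})/\Phi^*\mathscr{G}$ inject into the corresponding quotients built from $\mathscr{F}$, which have the required support and length by the definition of a shtuka. The only cosmetic difference is in phrasing the first step: the paper observes at the generic point of $X\otimes E$ that $\mathscr{G}$ and $\Phi^*\mathscr{G}$ cut out the same fiber, so the saturated subsheaf $\mathscr{G}(\beta)\subset\mathscr{F}(\beta)$ must contain the image of $\Phi^*\mathscr{G}$; you instead note that $\Phi^*\mathscr{G}\to(\mathscr{F}/\mathscr{G})(\beta)$ factors through the torsion sheaf $\Phi^*(\mathscr{G}/\mathscr{E})$, hence vanishes. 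These are the same argument viewed from two angles. You spell out the intersection identities $(\Phi^*\mathscr{G}+\mathscr{G})\cap\mathscr{F}=\mathscr{G}$ and $(\Phi^*\mathscr{G}+\mathscr{G})\cap\Phi^*\mathscr{F}=\Phi^*\mathscr{G}$ that the paper leaves implicit when it asserts the length bound transfers from $(\Phi^*\mathscr{F}+\mathscr{F})/\mathscr{F}$ down to $(\Phi^*\mathscr{G}+\mathscr{G})/\mathscr{G}$; this is a useful clarification but not a new idea.
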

\begin{proof}
  Let $\eta=\Spec E'$ be the generic point of $X\otimes E$. Recall that $k$ is the field of rational functions on $X$. The injective homomorphism $\Id_k\otimes\Fr_E:k\otimes E\to k\otimes E$ induces a homomorphism $\psi: E'\to E'$. The morphism $\Phi^*\mathscr{F}\hookrightarrow\mathscr{F}(\beta)$ induces a linear map of $E'$-vectors spaces $\phi:\psi^*\mathscr{F}_\eta\to\mathscr{F}_\eta$. The condition $\Phi^*\mathscr{E}\subset\mathscr{E}(\beta)$ implies that $\phi(\psi^*\mathscr{E}_\eta)=\mathscr{E}_\eta$. Note that $\mathscr{F}_\eta=(\Phi^*\mathscr{F})_\eta$. Thus the two subbundles $\mathscr{G}\subset\mathscr{F}$ and $\Phi^*\mathscr{G}\subset\Phi^*\mathscr{F}$ correspond to the same $E'$-subspace $\mathscr{E}_\eta\subset\mathscr{F}_\eta$. Since $\mathscr{G}(\beta)$ is saturated in $\mathscr{F}(\beta)$, we have $\Phi^*\mathscr{G}\subset\mathscr{G}(\beta)$. The definition of a shtuka implies that $(\Phi^*\mathscr{F}+\mathscr{F})/\mathscr{F}$ is a torsion sheaf at $\beta$ of length at most one, hence is $(\Phi^*\mathscr{G}+\mathscr{G})/\mathscr{G}$. The proof for $(\Phi^*\mathscr{G}+\mathscr{G})/\Phi^*\mathscr{G}$ is similar.
\end{proof}

\begin{Remark}\label{two types of exact sequences for reducible shtukas}
  Let the notation be the same as in \cref{definition of reducible shtukas}. Let $\mathscr{G}$ be the saturation of $\mathscr{E}$ in $\mathscr{F}$. We have an exact sequence of locally free sheaves on $X\otimes E$
\[\begin{tikzcd}0\arrow[r]&\mathscr{G}\arrow[r]&\mathscr{F}\arrow[r]&\mathscr{H}\arrow[r]&0.\end{tikzcd}\]

Assume $\alpha\ne \beta$. From \cref{saturation of a Frobenius stable subsheaf} we see that one (and only one) of the following two possibilities holds.

(1) $\mathscr{G}$ is a shtuka with zero $\alpha$ and pole $\beta$, and the morphism $\Phi^*\mathscr{F}\hookrightarrow\mathscr{F}(\beta)$ induces an isomorphism $\Phi^*\mathscr{H}\xrightarrow{\sim}\mathscr{H}$.

(2) $\mathscr{H}$ is a shtuka with zero $\alpha$ and pole $\beta$, and the morphism $\Phi^*\mathscr{F}\hookrightarrow\mathscr{F}(\beta)$ induces an isomorphism $\Phi^*\mathscr{G}\xrightarrow{\sim}\mathscr{G}$.
\end{Remark}

\begin{Remark}\label{Remark: exact sequence for reducible shtuka}
Let $\mathscr{F}$ be a shtuka over $\Spec E$ with zero and pole satisfying condition (\ref{zero and pole not related by Frobenius}). Let $n\in\mathbb{Z}$ if $E$ is perfect and $n\in\mathbb{Z}_{\ge0}$ in general. Let $\mathscr{F}_n$ be the shtuka obtained from $\mathscr{F}$ by applying partial Frobenius $(F_2)^n$. (See \cref{all integer powers of partial Frobenius}.)

Assume $\mathscr{F}$ is reducible, and let $\mathscr{G},\mathscr{H}$ be as in \cref{two types of exact sequences for reducible shtukas}. Then $\mathscr{F}_n$ is also irreducible with the following more precise description.

In case (1) of \cref{two types of exact sequences for reducible shtukas}, we have an exact sequence of locally free sheaves on $X\otimes E$
\[\begin{tikzcd}0\arrow[r]&\mathscr{G}_n\arrow[r]&\mathscr{F}_n\arrow[r]&\mathscr{H}\arrow[r]&0\end{tikzcd}\]
in which $\mathscr{G}_n$ is the shtuka obtained from $\mathscr{G}$ by applying $(F_2)^n$, and $\Phi^*\mathscr{F}_n\hookrightarrow\mathscr{F}_{n+1}$ induces an isomorphism $\Phi^*\mathscr{H}\xrightarrow{\sim}\mathscr{H}$.

In case (2) of \cref{two types of exact sequences for reducible shtukas}, we have an exact sequence of locally free sheaves on $X\otimes E$.
\[\begin{tikzcd}0\arrow[r]&\mathscr{G}\arrow[r]&\mathscr{F}_n\arrow[r]&\mathscr{H}_n\arrow[r]&0\end{tikzcd}\]
in which $\mathscr{H}_n$ is the shtuka obtained from $\mathscr{H}$ by applying $(F_2)^n$, and $\Phi^*\mathscr{F}_n\hookrightarrow\mathscr{F}_{n+1}$ induces an isomorphism $\Phi^*\mathscr{G}\xrightarrow{\sim}\mathscr{G}$.
\end{Remark}

\subsection{Maximal trivial sub and maximal trivial quotient}\label{section: maximal trivial sub and maximal trivial quotient}
\begin{Lemma}\label{difference between divisor and its Frobenius}
Let $\alpha,\beta:\Spec E\to X$ be two morphisms. If an effective divisor $D$ of $X\otimes E$ satisfies $\Phi^*D+\alpha=D+\beta$, then $\beta=\Fr_X^n\bcirc\alpha$ for some $n\ge 0$.
\end{Lemma}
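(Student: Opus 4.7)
The hypothesis reads $\Phi^*D+\Gamma_\alpha=D+\Gamma_\beta$, an equality of effective Cartier divisors on $X\otimes E$. My plan is to reduce to the case where $E$ is perfect and then analyze the equation one $\Phi$-orbit at a time on closed points.

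First, I would reduce to $E$ perfect by pulling back along $\iota:\Spec\bar E\to\Spec E$, where $\bar E$ is an algebraic closure of $E$. Since $\Id_X\times\Fr_E$ pulls back along $\iota$ to $\Id_X\times\Fr_{\bar E}$, the divisor equation pulls back to the analogous equation on $X\otimes\bar E$ with $\alpha,\beta$ replaced by $\alpha\bcirc\iota,\beta\bcirc\iota$. Since $\iota$ is faithfully flat, two morphisms $\Spec E\to X$ equal after composing with $\iota$ are already equal, so a conclusion of the form $\beta\bcirc\iota=\Fr_X^n\bcirc\alpha\bcirc\iota$ with $n\ge 0$ descends to $\beta=\Fr_X^n\bcirc\alpha$. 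Hence it suffices to treat $E$ perfect.

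With $E$ perfect, $\Phi$ is an automorphism of $X\otimes E$, inducing a bijection on closed points, and $\Fr_X^n\bcirc\gamma$ is defined for every $n\in\mathbb Z$ by invertibility of $\Fr_E$. Combining $\Phi^*\Gamma_\gamma=\Gamma_{\Fr_X\bcirc\gamma}$ with the standard identity $v_Q(\Phi^*D)=v_{\Phi(Q)}(D)$ (valid because $\Phi$ is an isomorphism) for every closed point $Q$, the hypothesis rewrites as
\[
v_{\Phi(Q)}(D)-v_Q(D)=[Q=\Gamma_\beta]-[Q=\Gamma_\alpha]\qquad\text{for every closed point }Q.
\]
I would then parameterize the $\Phi$-orbit of $\Gamma_\alpha$ by $Q_n:=\Gamma_{\Fr_X^n\bcirc\alpha}$ with $\Phi(Q_n)=Q_{n-1}$; this orbit is finite iff $\alpha$ factors through a closed point of $X$, and infinite iff $\alpha$ hits the generic point.

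For a finite orbit of size $d$, summing the identity over one period telescopes to zero on the left, so the right-hand side also sums to zero, forcing exactly one $Q_n$ in the orbit to equal $\Gamma_\beta$; this gives $\beta=\Fr_X^n\bcirc\alpha$ with $0\le n<d$. For an infinite orbit, set $g(n):=v_{Q_n}(D)\ge 0$, a finitely supported function on $\mathbb Z$ satisfying $g(n-1)-g(n)=[Q_n=\Gamma_\beta]-[n=0]$. Telescoping over $\mathbb Z$ forces $\Gamma_\beta=Q_m$ for some $m$. The key step then uses effectiveness of $D$: if $m<0$, the identity for $n>0$ reads $g(n-1)=g(n)$, so $g$ is constant and hence identically zero on $\{n\ge 0\}$ by finite support, but plugging $n=0$ then gives $g(-1)=g(0)-1=-1$, contradicting $g\ge 0$. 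Hence $m\ge 0$. The main obstacle I anticipate is keeping the direction of the orbit parameterization straight---$\Phi^*$ shifts the Frobenius index up while $\Phi$ on closed points shifts it down---and applying effectiveness in the right place to pin down $n\ge 0$ rather than merely $n\in\mathbb Z$.
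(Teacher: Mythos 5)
Your proof is correct, but it is a genuinely different argument from the one in the paper. The paper's proof is a short induction on $\deg D$: if $\alpha\ne\beta$, then $v_{\Gamma_\alpha}(D+\beta)=v_{\Gamma_\alpha}(\Phi^*D+\alpha)\ge 1$ forces $D-\alpha$ to be effective; setting $D'=D-\alpha$ one finds $\Phi^*D'+\Fr_X\bcirc\alpha=D'+\beta$ with $\deg D'<\deg D$, and the induction hypothesis gives $\beta=\Fr_X^{m+1}\bcirc\alpha$. Your argument instead reduces to $E$ perfect so that $\Phi$ becomes an automorphism, rewrites the divisor equation as a difference equation along the $\Phi$-orbit of $\Gamma_\alpha$, and uses telescoping (together with effectivity of $D$ on the forward half-orbit) to locate $\Gamma_\beta=Q_m$ with $m\ge 0$. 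Both proofs are sound and both ultimately hinge on effectivity of $D$ in the same essential way, but the paper's induction avoids the perfectness reduction and the finite-versus-infinite orbit case split, landing in three lines. Your route is longer but makes the orbit structure and the source of the sign constraint $n\ge 0$ more transparent, and it would be the natural phrasing if one wanted to extract a quantitative statement about which closed points of $X\otimes E$ can carry $D$.
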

\begin{proof}
We prove by induction on the degree of $D$. When $D=0$, we get $\alpha=\beta$. When $D>0$ and $\alpha\ne \beta$, set $D'=D-\alpha$. Then $D'$ is effective,  $\deg D'<\deg D$, and $\Phi^*D'-D'=\beta-\Fr_X\bcirc\alpha$. The induction hypothesis shows that $\beta=\Fr_X^m\bcirc\Fr_X\bcirc\alpha=\Fr_X^{m+1}\bcirc\alpha$ for some $m\ge 0$.
\end{proof}

\begin{Proposition}\label{Frobenius stable subsheaf (equal rank)}
Let $\mathscr{G}$ be a shtuka over $\Spec E$ with zero $\alpha $ and pole $\beta$. Assume that there exists a subsheaf $\mathscr{E}\subset\mathscr{G}$ satisfying

(i) $\rank\mathscr{E}=\rank\mathscr{G}$;

(ii) the image of $\Phi^*\mathscr{E}$ in $\mathscr{G}(\beta)$ is contained in $\mathscr{E}$.

\noindent Then $\beta=\Fr_X^n\bcirc\alpha$ for some $n\ge 0$.
\end{Proposition}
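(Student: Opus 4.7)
The plan is to produce an effective divisor $D$ on $X\otimes E$ satisfying $\Phi^*D+\alpha=D+\beta$, so that \cref{difference between divisor and its Frobenius} applies directly. The natural candidate is the divisor $D$ of the torsion quotient $\mathscr{G}/\mathscr{E}$, characterized by $\det\mathscr{E}=\det\mathscr{G}(-D)$.

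The first step is to extract a divisor identity from the hypothesis. The assumption that the shtuka map $\phi_\mathscr{G}\colon\Phi^*\mathscr{G}\to\mathscr{G}(\beta)$ carries $\Phi^*\mathscr{E}$ into $\mathscr{E}$ produces an arrow $\phi_\mathscr{E}\colon\Phi^*\mathscr{E}\to\mathscr{E}$ that, together with the inclusions $\Phi^*\mathscr{E}\hookrightarrow\Phi^*\mathscr{G}$ and $\mathscr{E}\hookrightarrow\mathscr{G}(\beta)$, forms a commutative square with $\phi_\mathscr{G}$ on the bottom. Applying $\det$ to this square and invoking both the shtuka isomorphism $\Phi^*\det\mathscr{G}\simto\det\mathscr{G}(\beta-\alpha)$ and the identification $\det\mathscr{E}=\det\mathscr{G}(-D)$ shows that $\det\phi_\mathscr{E}$ is a nonzero morphism of line bundles $\det\mathscr{G}(\beta-\alpha-\Phi^*D)\to\det\mathscr{G}(-D)$, i.e.\ multiplication by a regular section of $\mathscr{O}(\Phi^*D+\alpha-D-\beta)$. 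The divisor $D_1$ of that section is effective, and
\[\Phi^*D+\alpha=D+D_1+\beta.\]

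The second step is to show $D_1=0$. Base change along the inclusion of $E$ into its perfection $E^{\mathrm{perf}}$ preserves the hypothesis, and the conclusion over $E^{\mathrm{perf}}$ descends to $E$ because $X(E)\to X(E^{\mathrm{perf}})$ is injective; hence I may assume $E$ is perfect. Then $\Phi$ is an automorphism of $X\otimes E$, so $\Phi^*$ preserves divisor degrees. Iterating $\Phi^*D-D=D_1+\beta-\alpha$ yields
\[\Phi^{*n}D-D=\sum_{i=0}^{n-1}\Phi^{*i}(D_1+\beta-\alpha).\]
The left-hand side has degree zero; on the right, each $\Fr_X^i\bcirc\alpha$ and $\Fr_X^i\bcirc\beta$ is a section of degree $1$, so the right-hand degree equals $n\deg D_1$. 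Hence $\deg D_1=0$ and the effective divisor $D_1$ vanishes.

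With $D_1=0$ the relation reduces to $\Phi^*D+\alpha=D+\beta$, and \cref{difference between divisor and its Frobenius} delivers $\beta=\Fr_X^n\bcirc\alpha$ for some $n\ge 0$. The hard part is the vanishing of $D_1$: the determinant comparison is largely formal bookkeeping, but the degree-counting argument requires $\Phi$ to preserve degrees, which is precisely what forces the reduction to a perfect base field.
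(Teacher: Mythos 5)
Your proof is correct, but it takes a route that is a bit more roundabout than the paper's. Both arguments reduce to the rank-one situation (you via determinants, the paper via the $d$-th exterior power) and aim at the identity $\Phi^*D+\alpha=D+\beta$ so that \cref{difference between divisor and its Frobenius} finishes the job. The paper gets there directly: since $\Phi=\Id_X\times\Fr_E$ is flat over $\Spec E$, one has $\chi(\Phi^*\mathscr{E})=\chi(\mathscr{E})$, so the injection $\Phi^*\mathscr{E}\hookrightarrow\mathscr{E}$ inside $\mathscr{G}(\beta)$ must be an isomorphism, and the divisor equation pops out with no extra term. You instead allow a spurious effective divisor $D_1$ measuring the failure of surjectivity and then kill it by a degree count, first passing to the perfection $E^{\mathrm{perf}}$ so that $\Phi$ is an automorphism. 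This is valid (both the base change and the descent via injectivity of $X(E)\to X(E^{\mathrm{perf}})$ are fine), but the detour is unnecessary: $\Phi^*$ preserves degrees of line bundles over \emph{any} base field $E$, precisely because it preserves Euler characteristics, so you could have taken degrees directly in $\Phi^*D-D=D_1+\beta-\alpha$ and concluded $\deg D_1=0$ without ever changing the base field or iterating the relation. In other words, both proofs ultimately run on the same engine --- invariance of $\chi$ under $\Phi^*$ --- but the paper invokes it once at the level of the rank-one subsheaf, while you invoke it implicitly through degrees after a reduction you didn't actually need.
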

\begin{proof}
The $d$-th exterior power of a shtuka of rank $d$ is a shtuka of rank 1 with the same zero and pole. So we can apply the $d$-th exterior power to all sheaves involved to reduce the problem to the case $d=1$.

Now we have $\mathscr{G}=\mathscr{E}(D)$, where $D$ is an effective divisor of $X\otimes E$. We see that $\Phi^*\mathscr{G}=(\Phi^*\mathscr{E})(\Phi^*D).$ Since $\chi(\Phi^*\mathscr{E})=\chi(\mathscr{E})$, the assumption $f(\Phi^*\mathscr{E})\subset g(\mathscr{E})$ implies $f(\Phi^*\mathscr{E})=g(\mathscr{E})$. Hence $\Phi^*\mathscr{G}\cong\mathscr{E}(\Phi^*D)$. Since the shtuka has rank 1, we have $\Phi^*\mathscr{G}\cong\mathscr{G}(\beta-\alpha)=\mathscr{E}(D+\beta-\alpha)$. Thus we get $\Phi^*D=D+\beta-\alpha$. The statement now follows from \cref{difference between divisor and its Frobenius}.
\end{proof}

\begin{Proposition}\label{maximal trivial sub and maximal trivial quotient}
Let $\mathscr{F}$ be a shtuka over $\Spec E$ with zero $\alpha$ and pole $\beta$ satisfying condition (\ref{zero and pole not related by Frobenius}). Let $S_1$ (resp. $S_2$) be the poset of all subsheaves $\mathscr{E}\subset\mathscr{F}$ satisfying the following condition (1) (resp. (2)).

(1) The image of $\Phi^*\mathscr{E}$ in $\mathscr{F}(\beta)$ is contained in $\mathscr{E}(\beta)$, the sheaf $\mathscr{F}/\mathscr{E}$ is locally free, and the morphism $\Phi^*(\mathscr{F}/\mathscr{E})\to(\mathscr{F}/\mathscr{E})(\beta)$ induces an isomorphism $\Phi^*(\mathscr{F}/\mathscr{E})\xrightarrow{\sim}(\mathscr{F}/\mathscr{E})$.

(2) The image of $\Phi^*\mathscr{E}$ in $\mathscr{F}(\beta)$ is $\mathscr{E}$.

Then the poset $S_1$ has a least element, denoted by $\mathscr{F}^{\I}$. The poset $S_2$ has a greatest element, denoted by $\mathscr{F}^{\II}$, and $\mathscr{F}/\mathscr{F}^{\II}$ is locally free.
\end{Proposition}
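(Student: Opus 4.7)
For $S_1$, I would first verify that $\mathscr{F}$ itself lies in $S_1$ and that $S_1$ is closed under binary intersections. Given $\mathscr{E}_1,\mathscr{E}_2\in S_1$, exactness of $\Phi^*$ (since $\Fr_E$ is flat, $E$ being a field) together with compatibility of intersection with twisting by $\mathscr{O}(\beta)$ gives $\Phi^*(\mathscr{E}_1\cap\mathscr{E}_2)\subset(\mathscr{E}_1\cap\mathscr{E}_2)(\beta)$ inside $\mathscr{F}(\beta)$; the quotient $\mathscr{F}/(\mathscr{E}_1\cap\mathscr{E}_2)$ embeds into the locally free sheaf $\mathscr{F}/\mathscr{E}_1\oplus\mathscr{F}/\mathscr{E}_2$, hence is torsion-free, hence locally free on the smooth curve $X\otimes E$. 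Using this embedding into a sheaf on which the induced shtuka map acts as an isomorphism, the image of $\Phi^*\bigl(\mathscr{F}/(\mathscr{E}_1\cap\mathscr{E}_2)\bigr)$ inside $\bigl(\mathscr{F}/(\mathscr{E}_1\cap\mathscr{E}_2)\bigr)(\beta)$ lies in $\mathscr{F}/(\mathscr{E}_1\cap\mathscr{E}_2)$, and a rank/$\chi$ comparison (using $\chi(\Phi^*\mathscr{M})=\chi(\mathscr{M})$ over the field $E$) upgrades this inclusion to the required isomorphism. Since every element of $S_1$ is saturated in $\mathscr{F}$, a descending chain in $S_1$ has nonincreasing rank bounded below by $0$; once the rank stabilizes, successive inclusions of equal-rank saturated subbundles are equalities. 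Thus $S_1$ satisfies the descending chain condition, and combined with intersection-closure we obtain a unique least element $\mathscr{F}^{\I}$.

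For $S_2$, closure under sums is straightforward: $\Phi^*(\mathscr{E}_1+\mathscr{E}_2)=\Phi^*\mathscr{E}_1+\Phi^*\mathscr{E}_2$ has image $\mathscr{E}_1+\mathscr{E}_2$ in $\mathscr{F}(\beta)$. Noetherianness of coherent sheaves on $X\otimes E$ gives the ascending chain condition for subsheaves of $\mathscr{F}$, so a maximal element exists; sum-closure makes it unique. Call it $\mathscr{F}^{\II}$.

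The substantive assertion is that $\mathscr{F}/\mathscr{F}^{\II}$ is locally free, and I would prove more generally that for any $\mathscr{E}\in S_2$ its saturation $\mathscr{G}$ in $\mathscr{F}$ again lies in $S_2$; by maximality this then forces $\mathscr{F}^{\II}$ to be saturated. Applying \cref{saturation of a Frobenius stable subsheaf} (whose hypothesis $\Phi^*\mathscr{E}\subset\mathscr{E}(\beta)$ holds a fortiori) and the dichotomy of \cref{two types of exact sequences for reducible shtukas} (applicable since (\ref{zero and pole not related by Frobenius}) implies $\alpha\ne\beta$), either (1) $\mathscr{G}$ is itself a shtuka of rank $\rank\mathscr{E}$ with zero $\alpha$ and pole $\beta$, or (2) the shtuka map induces an isomorphism $\Phi^*\mathscr{G}\xrightarrow{\sim}\mathscr{G}$ inside $\mathscr{G}(\beta)$. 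Case (2) is precisely the statement $\mathscr{G}\in S_2$. To rule out case (1), note that in that case $\mathscr{E}\subset\mathscr{G}$ is a subsheaf of full rank with the image of $\Phi^*\mathscr{E}$ in $\mathscr{G}(\beta)$ contained in $\mathscr{E}$; \cref{Frobenius stable subsheaf (equal rank)} then forces $\beta=\Fr_X^n\bcirc\alpha$ for some $n\ge0$, contradicting (\ref{zero and pole not related by Frobenius}). The main obstacle is this final reduction, and it is exactly the point at which condition (\ref{zero and pole not related by Frobenius}) is used (channelled through \cref{Frobenius stable subsheaf (equal rank)}, and ultimately through \cref{difference between divisor and its Frobenius}).
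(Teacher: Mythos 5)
The treatment of $S_2$ is correct and is essentially the paper's argument: form the sum of all members of $S_2$, use the $\chi$-comparison to upgrade $\Phi^*\mathscr{F}^{\II}\subset\mathscr{F}^{\II}$ to equality, then rule out non-saturation via the dichotomy of \cref{two types of exact sequences for reducible shtukas} together with \cref{Frobenius stable subsheaf (equal rank)}; your slight generalization (saturations of all members of $S_2$ remain in $S_2$) is fine but not needed.

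The $S_1$ part, however, has a gap in the intersection-closure step, and it is precisely this step that the paper sidesteps by instead dualizing and reducing $S_1$ to $S_2$ for $\mathscr{F}^\vee$. Write $\mathscr{Q}=\mathscr{F}/(\mathscr{E}_1\cap\mathscr{E}_2)$ and $\mathscr{Q}'=\mathscr{F}/\mathscr{E}_1\oplus\mathscr{F}/\mathscr{E}_2$. From the embedding $\mathscr{Q}\hookrightarrow\mathscr{Q}'$, the commutative square involving $\Phi^*\mathscr{Q}\to\mathscr{Q}(\beta)$ and $\Phi^*\mathscr{Q}'\xrightarrow{\sim}\mathscr{Q}'$ only shows that the image of $\Phi^*\mathscr{Q}$ lies in $\mathscr{Q}(\beta)\cap\mathscr{Q}'$ (inside $\mathscr{Q}'(\beta)$), not in $\mathscr{Q}$ itself. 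The discrepancy $(\mathscr{Q}(\beta)\cap\mathscr{Q}')/\mathscr{Q}$ is a torsion subsheaf of $\mathscr{Q}'/\mathscr{Q}\cong\mathscr{F}/(\mathscr{E}_1+\mathscr{E}_2)$ supported on $\Gamma_\beta$, and nothing you have established forces $\mathscr{E}_1+\mathscr{E}_2$ to be saturated in $\mathscr{F}$ along $\Gamma_\beta$; your subsequent $\chi$-comparison then has nothing to compare against, since the putative containment in $\mathscr{Q}$ is the very thing at issue. In general, a subsheaf $\mathscr{Q}$ of a trivial shtuka $\mathscr{Q}'$ with $\Phi^*\mathscr{Q}$ mapping into $\mathscr{Q}(\beta)$ need not have $\Phi^*\mathscr{Q}$ mapping into $\mathscr{Q}$; that conclusion requires saturation at $\beta$, which is exactly what is missing here. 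The cleanest repair is the paper's route: apply the already-proved $S_2$ statement to $\mathscr{F}^\vee$ (whose zero and pole still satisfy condition (\ref{zero and pole not related by Frobenius})), and set $\mathscr{F}^{\I}=(\mathscr{F}^\vee/(\mathscr{F}^\vee)^{\II})^\vee$; one checks directly that this lies in $S_1$, and minimality follows by dualizing any $\mathscr{E}\in S_1$ to a saturated element of $S_2$ for $\mathscr{F}^\vee$ and invoking maximality of $(\mathscr{F}^\vee)^{\II}$. In particular, the existence of $\mathscr{F}^{\I}$ does not rely on $S_1$ being closed under intersections.
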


\begin{proof}
The set $S_2$ is nonempty since it contains the zero subsheaf. Let $\mathscr{F}^{\II}=\sum_{\mathscr{E}\in S_2}\mathscr{E}$. We see that the image of $\Phi^*\mathscr{F}^{\II}$ in $\mathscr{F}(\beta)$ is contained in $\mathscr{F}^{\II}$. Since $\chi(\Phi^*\mathscr{F}^{\II})=\chi(\mathscr{F}^{\II})$, the image of $\Phi^*\mathscr{F}^{\II}$ in $\mathscr{F}(\beta)$ is equal to $\mathscr{F}^{\II}$. Thus $\mathscr{F}^{\II}$ is the greatest element of $S_2$.

Suppose that $\mathscr{F}/\mathscr{F}^{\II}$ is not locally free. Let $\mathscr{G}$ be the saturation of $\mathscr{F}^{\II}$ in $\mathscr{F}$. We have $\mathscr{F}^{\II}\subsetneqq\mathscr{G}$. Hence the image of $\Phi^*\mathscr{G}$ in $\mathscr{F}(\beta)$ is not contained in $\mathscr{G}$. Now \cref{two types of exact sequences for reducible shtukas} implies that $\mathscr{G}$ is a shtuka with zero $\alpha$ and pole $\beta$. Applying \cref{Frobenius stable subsheaf (equal rank)} to $\mathscr{G}$ and its subsheaf $\mathscr{F}^{\II}$, we deduce that $\beta=\Fr_X^n\bcirc\alpha$ for some $n\ge 0$, contradicting condition (\ref{zero and pole not related by Frobenius}).

Let $\mathscr{F}^\vee$ be the dual of the given shtuka $\mathscr{F}$. It has zero $\beta$ and pole $\alpha$ which also satisfy condition (\ref{zero and pole not related by Frobenius}). Therefore, there is a unique maximal subsheaf $(\mathscr{F}^\vee)^{\II}\subset\mathscr{F}^\vee$ satisfying condition (2), and $\mathscr{F}^\vee/(\mathscr{F}^\vee)^{\II}$ is locally free. Then $\mathscr{F}^{\I}=(\mathscr{F}^\vee/(\mathscr{F}^\vee)^{\II})^\vee$ is a least element of $S_1$.
\end{proof}

\begin{Remark}\label{criterion for irreducibility of shtuka using maximal trivial sub and quotient}
Suppose $\mathscr{F}$ is a right shtuka over $\Spec E$ with zero and pole satisfying condition (\ref{zero and pole not related by Frobenius}). Then $\mathscr{F}$ is irreducible if and only if $\mathscr{F}^{\I}=\mathscr{F}$ and $\mathscr{F}^{\II}=0$.
\end{Remark}

\section{Cohomology of Shtukas over a Perfect Field}
In this section, we fix a perfect field $E$ over $\mathbb{F}_q$. Denote $\Phi=\Id_X\otimes\Fr_E:X\otimes E\to X\otimes E$.

\subsection{Notation}\label{notation for cohomology of shtukas over a perfect field}
Let $\mathscr{F}_0$ be a shtuka over $\Spec E$ with zero $\alpha$ and pole $\beta$ satisfying condition (\ref{zero and pole not related by Frobenius}). We further assume that $\chi(\mathscr{F}_0)=0$. As in \cref{all integer powers of partial Frobenius}, for each $i\in\mathbb{Z}$ we obtain a shtuka $\mathscr{F}_i$ from the above one by applying partial Frobenius $(F_2)^i$. We get a chain of inclusions
\[\dots\hookrightarrow\mathscr{F}_i\hookrightarrow\mathscr{F}_{i+1}\hookrightarrow\mathscr{F}_{i+2}\hookrightarrow\dots\]
We have $\chi(\mathscr{F}_i)=i$ for all $i\in\mathbb{Z}$.

For a right shtuka $\mathscr{F}$ over $\Spec E$, let $\mathscr{F}^{\I},\mathscr{F}^{\II}$ be as in \cref{maximal trivial sub and maximal trivial quotient}.

\cref{Remark: exact sequence for reducible shtuka} allows us to identify $\mathscr{F}_i^{\II}$ for all $i\in \mathbb{Z}$, and we denote it by $\mathscr{A}$. Similarly we can identify $\mathscr{F}_i/\mathscr{F}_i^{\I}$ for all $i\in \mathbb{Z}$, and we denote it by $\mathscr{B}$. We know that $\mathscr{A}$ is a subbundle of all $\mathscr{F}_i$, and $\mathscr{B}$ is a quotient bundle of all $\mathscr{F}_i$. Put $a=h^0(X\otimes E,\mathscr{A}),b=h^1(X\otimes E,\mathscr{B})$.

\subsection{Computation of cohomology}
Recall that $\chi(\mathscr{F}_i)=i$ for all $i\in \mathbb{Z}$.

The goal of this section is to prove the following proposition.
\begin{Proposition}\label{cohomology of shtukas over a field}
The morphism $\mathscr{A}\to \mathscr{F}_i$ induces an isomorphism $H^0(X\otimes E,\mathscr{A})\xrightarrow{\sim}H^0(X\otimes E,\mathscr{F}_i)$ for $i\le a-b$. The morphism $\mathscr{F}_i\to\mathscr{B}$ induces an isomorphism $H^1(X\otimes E,\mathscr{F}_i)\xrightarrow{\sim}H^1(X\otimes E,\mathscr{B})$ for $i\ge a-b$.
\end{Proposition}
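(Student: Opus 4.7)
The plan is to reduce the proposition to a single dimension equality $h^0(\mathscr{F}_{a-b})=a$ via elementary monotonicity, and then establish that equality by contradicting the maximality of $\mathscr{A}=\mathscr{F}_{a-b}^{\II}$.

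First, basic bookkeeping. From $\mathscr{A}\hookrightarrow\mathscr{F}_i$, $\mathscr{F}_i\twoheadrightarrow\mathscr{B}$, and the Euler characteristic identity $h^0(\mathscr{F}_i)-h^1(\mathscr{F}_i)=\chi(\mathscr{F}_i)=i$, we obtain the lower bounds $h^0(\mathscr{F}_i)\ge\max(a,b+i)$ and $h^1(\mathscr{F}_i)\ge\max(b,a-i)$. The short exact sequences $0\to\mathscr{F}_i\to\mathscr{F}_{i+1}\to T\to 0$ (with $T$ torsion of length one supported on the pole of $\mathscr{F}_i$, coming from the partial Frobenius step) yield monotonicity: $h^0(\mathscr{F}_i)$ is non-decreasing and $h^1(\mathscr{F}_i)$ is non-increasing in $i$, with each step either incrementing $h^0$ by $1$ or decrementing $h^1$ by $1$. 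These two facts reduce the whole proposition to the single equality $h^0(\mathscr{F}_{a-b})=a$: monotonicity together with the lower bounds then forces $h^0(\mathscr{F}_i)=\max(a,b+i)$ and $h^1(\mathscr{F}_i)=\max(b,a-i)$ for every $i$, and the claimed isomorphisms follow from the automatic injectivity of $H^0(\mathscr{A})\to H^0(\mathscr{F}_i)$ and surjectivity of $H^1(\mathscr{F}_i)\to H^1(\mathscr{B})$.

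Second, to prove $h^0(\mathscr{F}_{a-b})=a$, I would show that every section $s\in H^0(X\otimes E,\mathscr{F}_{a-b})$ lies in $H^0(\mathscr{A})$. Starting from $\mathscr{A}+s(\mathscr{O}_{X\otimes E})\subset\mathscr{F}_{a-b}$, I would iteratively enlarge by taking shtuka-Frobenius images (via $\Phi^*\mathscr{F}_{a-b}\to\mathscr{F}_{a-b}(\beta)$) and pulling back through the inverse partial Frobenius of \cref{inverse of partial Frobenius} (available since $E$ is perfect), then take the saturation. This produces a locally free subsheaf $\mathscr{E}\subset\mathscr{F}_{a-b}$ on which $\Phi^*\mathscr{E}$ maps isomorphically to $\mathscr{E}$ inside $\mathscr{F}_{a-b}(\beta)$, i.e., $\mathscr{E}$ satisfies condition (2) of \cref{maximal trivial sub and maximal trivial quotient}. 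By maximality of $\mathscr{A}=\mathscr{F}_{a-b}^{\II}$ in the poset $S_2$ we get $\mathscr{E}\subseteq\mathscr{A}$, and combined with $\mathscr{A}\subseteq\mathscr{E}$ this forces $\mathscr{E}=\mathscr{A}$, so $s\in H^0(\mathscr{A})$ as desired.

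The main obstacle is executing the Frobenius-closure construction inside $\mathscr{F}_{a-b}$: a priori the shtuka-Frobenius image of a section is only a section of $\mathscr{F}_{a-b}(\beta)$ rather than of $\mathscr{F}_{a-b}$ itself, so one must carefully use the chain $\mathscr{F}_i\hookrightarrow\mathscr{F}_{i+1}$ together with its inverse to keep the construction inside $\mathscr{F}_{a-b}$. Condition (\ref{zero and pole not related by Frobenius}) is essential here: it ensures that all Frobenius translates of the pole remain disjoint from those of the zero, preventing pathologies in the saturation step and forcing $\mathscr{E}$ to be genuinely Frobenius-stable as an $\mathscr{O}_{X\otimes E}$-subsheaf of $\mathscr{F}_{a-b}$. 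The threshold $a-b$ emerges from Step 1 as the intersection point of the two lower-bound curves; for $i>a-b$ the existing lower bound already forces $h^0(\mathscr{F}_i)>a$, and the construction above must (correctly) fail, which is what the careful chain-based descent is designed to detect.
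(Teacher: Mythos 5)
Your Step~1 reduction is correct and matches the bookkeeping of the paper's proof: $h^0(\mathscr{F}_i)\ge\max(a,b+i)$ from the injection $\mathscr{A}\hookrightarrow\mathscr{F}_i$, the surjection $H^1(\mathscr{F}_i)\twoheadrightarrow H^1(\mathscr{B})$, and $\chi(\mathscr{F}_i)=i$, together with the unit-step monotonicity of $h^0$ and $h^1$ along the chain, do indeed reduce everything to $h^0(\mathscr{F}_{a-b})=a$.

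Step~2, however, has a genuine gap, and it is precisely at the point you flag as the ``main obstacle.'' You want to build a subsheaf $\mathscr{E}\subset\mathscr{F}_{a-b}$ containing $\mathscr{A}$ and $s$ with the image of $\Phi^*\mathscr{E}$ in $\mathscr{F}_{a-b}(\beta)$ equal to $\mathscr{E}$. But the condition that $\Phi^*\mathscr{E}$ even lands in $\mathscr{F}_{a-b}$ is equivalent (by the identity $\Phi^*\mathscr{F}_{i-1}=\Phi^*\mathscr{F}_i\cap\mathscr{F}_i$, proved in \cref{increments in a sequence of shtukas are isomorphic under Frobenius}) to $\mathscr{E}\subset\mathscr{F}_{a-b-1}$, and there is no a priori reason that $s$ lies in $\mathscr{F}_{a-b-1}$. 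More fundamentally, if $s\in H^0(\mathscr{F}_{a-b})\setminus H^0(\mathscr{A})$ existed, then the shtuka-Frobenius image of $s$ \emph{would} escape $\mathscr{F}_{a-b}$: this is exactly what \cref{increase of H^0 after the first one} says (via \cref{increments in a sequence of shtukas are isomorphic under Frobenius}, a section whose image in $\mathscr{F}_i/\mathscr{F}_{i-1}$ is nonzero is pushed by $\Phi^*$ to a section whose image in $\mathscr{F}_{i+1}/\mathscr{F}_i$ is nonzero). So the Frobenius-closure you propose cannot be kept inside $\mathscr{F}_{a-b}$, and the ``pull back through inverse partial Frobenius and saturate'' step is not doing any work --- the argument as sketched is circular, since the containment $\Phi^*\mathscr{E}\subset\mathscr{F}_{a-b}$ you need is equivalent to the conclusion you want.

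What is missing is the mechanism that pins down the threshold $a-b$. The paper supplies this with two asymptotic inputs that your proposal does not reproduce: \cref{asymptotic behavior for H^0(F_i) when i<<0} proves that $M_i:=H^0(\mathscr{F}_i)$ stabilizes at $M_A=H^0(\mathscr{A})$ for $i\ll 0$ (this is where the maximality of $\mathscr{A}=\mathscr{F}^{\II}$ is actually invoked, applied to the subsheaf generated by the \emph{limit} $M_{-\infty}=\bigcap_i M_i$, which genuinely is $\Phi_E^*$-stable), and \cref{asymptotic behavior for H^1(F_i) when i>>0} proves the dual statement for $H^1$ and $\mathscr{B}$ for $i\gg 0$. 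Combined with \cref{increase of H^0 after the first one} (once $h^0$ jumps, it jumps at every subsequent step) and $\chi(\mathscr{F}_i)=i$, these force the unique transition index to be $a-b+1$, giving $h^0(\mathscr{F}_{a-b})=a$. To repair your proof you would need to replace the Frobenius-closure construction by this asymptotic-plus-propagation argument, or by something functionally equivalent; the maximality of $\mathscr{A}$ alone, applied only at the level $i=a-b$, does not suffice.
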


\begin{Corollary}\label{h^0 and h^1 of a shtuka}
Let $\mathscr{G}$ be a shtuka over $\Spec E$ with zero and pole satisfying condition (\ref{zero and pole not related by Frobenius}). Put $m=h^0(X\otimes E,\mathscr{G}^{\II}), n=h^1(X\otimes E,\mathscr{G}/\mathscr{G}^{\I})$. Then $h^0(\mathscr{G})=\max\{m,n+\chi(\mathscr{G})\}$ and $h^1(\mathscr{G})=\max\{n,m-\chi(\mathscr{G})\}$. \qed
\end{Corollary}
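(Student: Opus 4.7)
The plan is to deduce this as an immediate consequence of \cref{cohomology of shtukas over a field} by a partial-Frobenius shift of the Euler characteristic, together with the Riemann--Roch identity $h^0-h^1=\chi$.

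First, I would set $c:=\chi(\mathscr{G})$. Since $E$ is perfect, \cref{all integer powers of partial Frobenius} allows us to form $\mathscr{F}_0:=(F_2)^{-c}\mathscr{G}$, a shtuka with the same zero and pole as $\mathscr{G}$, hence still satisfying condition~(\ref{zero and pole not related by Frobenius}), and with $\chi(\mathscr{F}_0)=0$. The chain $\mathscr{F}_i=(F_2)^i\mathscr{F}_0$ of \cref{notation for cohomology of shtukas over a perfect field} then places $\mathscr{G}$ at index $c$, i.e.\ $\mathscr{G}=\mathscr{F}_c$. By \cref{Remark: exact sequence for reducible shtuka}, the maximal trivial subsheaves $\mathscr{F}_i^{\II}$ and the maximal trivial quotients $\mathscr{F}_i/\mathscr{F}_i^{\I}$ are canonically identified across all $i$, so $\mathscr{A}=\mathscr{G}^{\II}$ and $\mathscr{B}=\mathscr{G}/\mathscr{G}^{\I}$; in particular the quantities $a,b$ of \cref{notation for cohomology of shtukas over a perfect field} coincide with the $m,n$ of the corollary.

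Next, I would split into two cases according to whether $c\le m-n$ or $c\ge m-n$. In the first case, \cref{cohomology of shtukas over a field} gives $H^0(X\otimes E,\mathscr{A})\simto H^0(X\otimes E,\mathscr{G})$, so $h^0(\mathscr{G})=m$; the Euler characteristic identity then forces $h^1(\mathscr{G})=m-c$. The hypothesis $c\le m-n$ is equivalent to $n+c\le m$ and to $m-c\ge n$, so these values agree with $\max\{m,n+c\}$ and $\max\{n,m-c\}$. In the second case, the proposition gives $H^1(X\otimes E,\mathscr{G})\simto H^1(X\otimes E,\mathscr{B})$, so $h^1(\mathscr{G})=n$ and $h^0(\mathscr{G})=n+c$; the inequality $c\ge m-n$ again yields the required maxima. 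The boundary case $c=m-n$ is consistent because both formulas give $h^0=m=n+c$ and $h^1=n=m-c$.

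There is essentially no obstacle: all the substantive content lies in \cref{cohomology of shtukas over a field} and in the stability of $\mathscr{F}_i^{\II}$ and $\mathscr{F}_i/\mathscr{F}_i^{\I}$ under partial Frobenius established in \cref{Remark: exact sequence for reducible shtuka}. The only care needed is the bookkeeping of the shift $\mathscr{G}=\mathscr{F}_c$ and the verification that the two ranges $i\le a-b$ and $i\ge a-b$ in the proposition, combined with Riemann--Roch, exhaust all cases and produce the two maxima uniformly.
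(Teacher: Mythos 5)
Your proof is correct and matches the paper's intended argument: the corollary is just Proposition~\ref{cohomology of shtukas over a field} specialized to $i=\chi(\mathscr{G})$ after building the chain $\mathscr{F}_i$ around $\mathscr{G}$ via partial Frobenius, combined with $h^0-h^1=\chi$; the identifications $\mathscr{A}=\mathscr{G}^{\II}$, $\mathscr{B}=\mathscr{G}/\mathscr{G}^{\I}$ from Remark~\ref{Remark: exact sequence for reducible shtuka} are exactly what lets $a,b$ become $m,n$. Your case split on $c\lessgtr m-n$ is the correct bookkeeping and the boundary case check is sound.
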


The following corollary follows from \cref{h^0 and h^1 of a shtuka,criterion for irreducibility of shtuka using maximal trivial sub and quotient}.
\begin{Corollary}\label{cohomology of irreducible shtukas}
If $\mathscr{G}$ is an irreducible shtuka over $\Spec E$ with zero and pole satisfying condition (\ref{zero and pole not related by Frobenius}) and we have $\chi(\mathscr{G})=0$, then $H^0(X\otimes E,\mathscr{G})=0$ and $H^1(X\otimes E, \mathscr{G})=0$. \qed
\end{Corollary}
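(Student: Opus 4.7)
The plan is to invoke the two results flagged immediately before the statement, so no new work is needed beyond tracing definitions. First I would use the criterion in \cref{criterion for irreducibility of shtuka using maximal trivial sub and quotient}: since $\mathscr{G}$ is irreducible (and its zero and pole satisfy condition (\ref{zero and pole not related by Frobenius})), we have $\mathscr{G}^{\I}=\mathscr{G}$ and $\mathscr{G}^{\II}=0$. In particular the quotient $\mathscr{G}/\mathscr{G}^{\I}$ is the zero sheaf, and so is $\mathscr{G}^{\II}$.

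Next I would plug these into \cref{h^0 and h^1 of a shtuka}. With the notation there, $m=h^0(X\otimes E,\mathscr{G}^{\II})=h^0(X\otimes E,0)=0$ and $n=h^1(X\otimes E,\mathscr{G}/\mathscr{G}^{\I})=h^1(X\otimes E,0)=0$. Combined with the assumption $\chi(\mathscr{G})=0$, the formulas of \cref{h^0 and h^1 of a shtuka} give
\[h^0(X\otimes E,\mathscr{G})=\max\{0,\,0+0\}=0,\qquad h^1(X\otimes E,\mathscr{G})=\max\{0,\,0-0\}=0,\]
which is exactly the assertion.

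There is essentially no obstacle at this step: the corollary is a direct specialization. The genuine content has already been absorbed into the two cited results, and in particular into \cref{cohomology of shtukas over a field}, whose proof (via the partial Frobenius chain $\mathscr{F}_i$ and the maximal trivial sub/quotient $\mathscr{A},\mathscr{B}$) is where the real argument lives. So the only thing to verify here is the bookkeeping: that the cohomological dimensions of the zero sheaf are indeed zero and that condition (\ref{zero and pole not related by Frobenius}) is inherited so that \cref{h^0 and h^1 of a shtuka} applies to $\mathscr{G}$ as stated.
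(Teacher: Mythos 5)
Your proposal is correct and matches the paper's own argument exactly: the paper explicitly states that the corollary follows from \cref{h^0 and h^1 of a shtuka} together with \cref{criterion for irreducibility of shtuka using maximal trivial sub and quotient}, which is precisely the specialization you carry out. Nothing to add.
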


For $i\in \mathbb{Z}$, put $M_i=H^0(X\otimes E,\mathscr{F}_i)$. We have inclusions $\Phi_E^*M_i\subset M_{i+1}$ induced by inclusions $\Phi^*\mathscr{F}_i\hookrightarrow\mathscr{F}_{i+1}$. Put $M_A=H^0(X\otimes E,\mathscr{A})$.

\begin{Lemma}\label{increments in a sequence of shtukas are isomorphic under Frobenius}
  For all $i\in\mathbb{Z}$, the composition $\Phi^*(\mathscr{F}_i/\mathscr{F}_{i-1})\xrightarrow{\sim}\Phi^*\mathscr{F}_i/\Phi^*\mathscr{F}_{i-1} \to\mathscr{F}_{i+1}/\mathscr{F}_i$ is an isomorphism.
\end{Lemma}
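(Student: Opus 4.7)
The plan is to realize $\mathscr{F}_{i+1}$ as an explicit pushout and then to deduce the statement from the general fact that in a pushout square in an abelian category the induced map between the cokernels of a pair of parallel arrows is an isomorphism.

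First I would unwind the definition of $F_2 = A(i)\circ B(ii)$ applied to $\mathscr{F}_{i-1}$. The right shtuka structure underlying $\mathscr{F}_{i-1}$ is a diagram $\Phi^*\mathscr{F}_{i-1}\hookrightarrow\mathscr{F}_i\hookleftarrow\mathscr{F}_{i-1}$, whose middle sheaf, by definition of $F_2$, becomes the main sheaf $\mathscr{F}_i$. Applying B(ii) to this produces a left shtuka with main sheaf $\mathscr{F}_i$, and applying A(i) to the resulting diagram $\mathscr{F}_i \leftarrow \Phi^*\mathscr{F}_{i-1} \rightarrow \Phi^*\mathscr{F}_i$ forms a pushout whose new middle sheaf is $\mathscr{F}_{i+1}$. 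Thus we obtain a pushout square
\[
\begin{tikzcd}
\Phi^*\mathscr{F}_{i-1}\arrow[hook]{r}\arrow[hook]{d}&\Phi^*\mathscr{F}_i\arrow[hook]{d}\\
\mathscr{F}_i\arrow[hook]{r}&\mathscr{F}_{i+1}
\end{tikzcd}
\]
in which the horizontal arrows come from $\mathscr{F}_{i-1}\hookrightarrow\mathscr{F}_i$ (with and without $\Phi^*$ applied) and the vertical arrows are the canonical inclusions $\Phi^*\mathscr{F}_j\hookrightarrow\mathscr{F}_{j+1}$ coming from the right shtuka structures on $\mathscr{F}_{i-1}$ and $\mathscr{F}_i$.

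Second, I would apply the standard fact that in a pushout square the induced map between the cokernels of the two vertical (or horizontal) arrows is an isomorphism. For our square this yields a natural isomorphism $\Phi^*\mathscr{F}_i/\Phi^*\mathscr{F}_{i-1}\xrightarrow{\sim}\mathscr{F}_{i+1}/\mathscr{F}_i$, and inspecting the definition of the induced map shows that it coincides with the second arrow in the composition appearing in the statement. Composing with the canonical isomorphism $\Phi^*(\mathscr{F}_i/\mathscr{F}_{i-1})\xrightarrow{\sim}\Phi^*\mathscr{F}_i/\Phi^*\mathscr{F}_{i-1}$ gives the desired conclusion; this latter isomorphism is available because $E$ is perfect, so $\Phi=\Id_X\otimes\Fr_E$ is an isomorphism and hence $\Phi^*$ is exact.

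The main obstacle is entirely notational: one must carefully trace which sheaves in Constructions A(i) and B(ii) correspond to which $\mathscr{F}_j$ in order to correctly identify the pushout square. Once this square is in hand, the argument is purely formal. Condition (\ref{zero and pole not related by Frobenius}) is used only through \cref{inverse of partial Frobenius}, which allows the chain $\{\mathscr{F}_i\}_{i\in\mathbb{Z}}$ to extend to negative indices; the pushout description and the consequent isomorphism hold uniformly for every $i\in\mathbb{Z}$.
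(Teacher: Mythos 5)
Your argument is correct and takes a route formally dual to the paper's. You observe that the square with top row $\Phi^*\mathscr{F}_{i-1}\to\Phi^*\mathscr{F}_i$, bottom row $\mathscr{F}_i\to\mathscr{F}_{i+1}$, and vertical arrows the two right-shtuka inclusions, is a pushout --- which is exactly what Construction~A(i) builds when $F_2$ is applied to $\mathscr{F}_{i-1}$ --- and then invoke the formal fact that the two parallel cokernels in a pushout square are isomorphic via the induced map. The paper instead uses the pullback identity $\Phi^*\mathscr{F}_{i-1}=\Phi^*\mathscr{F}_i\cap\mathscr{F}_i$ (derived from condition~(\ref{zero and pole not related by Frobenius})) to get injectivity of the map in question and then closes by comparing lengths, both quotients being skyscrapers of length one. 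Since a pushout square of monomorphisms is automatically a pullback, your observation actually subsumes the paper's intersection identity, and your route avoids the length count, so it is arguably a bit cleaner. One small correction: the bottom horizontal arrow $\mathscr{F}_i\hookrightarrow\mathscr{F}_{i+1}$ is the new inclusion produced by the pushout (equivalently, the $g$-part of the right shtuka structure on $\mathscr{F}_i$), not the map $\mathscr{F}_{i-1}\hookrightarrow\mathscr{F}_i$ ``without $\Phi^*$'' as your prose describes it; the diagram you drew is nonetheless the right one. Also note that condition~(\ref{zero and pole not related by Frobenius}) enters not only via \cref{inverse of partial Frobenius} but also through Construction~A(i) itself, which requires the zero and the Frobenius-shifted pole to have disjoint graphs so that the pushout is again locally free; since the $\mathscr{F}_i$ are given as valid shtukas this is already built in, but it is worth knowing where the hypothesis hides.
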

\begin{proof}
  Since $\alpha$ and $\beta$ satisfy condition (\ref{zero and pole not related by Frobenius}), we have $\Phi^*\mathscr{F}_{i-1}=\Phi^*\mathscr{F}_i\cap\mathscr{F}_i$ for all $i\in\mathbb{Z}$. Hence the composition is injective. Since both $\Phi^*(\mathscr{F}_i/\mathscr{F}_{i-1})$ and $\mathscr{F}_{i+1}/\mathscr{F}_i$ have length one, the statement follows.
\end{proof}

\begin{Lemma}\label{increase of H^0 after the first one}
If $M_{i-1}\subsetneqq M_i$, then $M_{j-1}\subsetneqq M_j$ for all $j>i$.
\end{Lemma}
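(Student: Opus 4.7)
By induction, the plan is to reduce to the single implication: if $M_{i-1}\subsetneqq M_i$, then $M_i\subsetneqq M_{i+1}$. The idea is to upgrade the identity $\Phi^*\mathscr{F}_{i-1}=\Phi^*\mathscr{F}_i\cap\mathscr{F}_i$ inside $\mathscr{F}_{i+1}$ (already invoked in the proof of \cref{increments in a sequence of shtukas are isomorphic under Frobenius}, and a consequence of condition~(\ref{zero and pole not related by Frobenius})) to the statement that the square
\[\begin{tikzcd}
\Phi^*\mathscr{F}_{i-1}\arrow[r,hook]\arrow[d,hook] & \Phi^*\mathscr{F}_i\arrow[d,hook]\\
\mathscr{F}_i\arrow[r,hook] & \mathscr{F}_{i+1}
\end{tikzcd}\]
is cartesian in quasi-coherent sheaves on $X\otimes E$, and then to pass to global sections.

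First I would apply $H^0(X\otimes E,-)$. Left-exactness preserves cartesian squares of monomorphisms, and since $E$ is perfect the Frobenius $\Fr_E$ is an isomorphism of $\Spec E$, so flat base change yields canonical identifications $H^0(X\otimes E,\Phi^*\mathscr{F}_k)\cong\Phi_E^*M_k$. Thus the induced square
\[\begin{tikzcd}
\Phi_E^*M_{i-1}\arrow[r,hook]\arrow[d,hook] & \Phi_E^*M_i\arrow[d,hook]\\
M_i\arrow[r,hook] & M_{i+1}
\end{tikzcd}\]
is cartesian, i.e.\ $\Phi_E^*M_i\cap M_i=\Phi_E^*M_{i-1}$ inside $M_{i+1}$.

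Finally, assuming $M_{i-1}\subsetneqq M_i$, the functor $\Phi_E^*$ on $E$-vector spaces is an equivalence (perfectness again) and so preserves strict inclusions, giving $\Phi_E^*M_{i-1}\subsetneqq\Phi_E^*M_i$. The cartesian identity then forces $\Phi_E^*M_i\not\subset M_i$, hence $M_i\subsetneqq M_{i+1}$, and the induction is complete. The only nontrivial step to verify is that taking $H^0$ sends the cartesian square of sheaves to a cartesian square of vector spaces; this is the point where the perfectness of $E$ enters essentially, via flat base change along $\Fr_E$ to identify $\Phi_E^*M_k$ with $H^0(X\otimes E,\Phi^*\mathscr{F}_k)$.
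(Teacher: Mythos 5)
Your proof is correct, and it runs on the same engine as the paper's: both hinge on the cartesian identity $\Phi^*\mathscr{F}_{i-1}=\Phi^*\mathscr{F}_i\cap\mathscr{F}_i$ inside $\mathscr{F}_{i+1}$, which comes from \cref{inverse of partial Frobenius} under condition (\ref{zero and pole not related by Frobenius}). The difference is one of packaging. The paper passes to cokernels: it cites \cref{increments in a sequence of shtukas are isomorphic under Frobenius} (whose two-line proof is exactly the cartesian identity above) to get $\Phi^*(\mathscr{F}_i/\mathscr{F}_{i-1})\simto\mathscr{F}_{i+1}/\mathscr{F}_i$, picks a concrete $s\in M_i\setminus M_{i-1}$, notes its image in $H^0(\mathscr{F}_i/\mathscr{F}_{i-1})$ is nonzero, and transports it to conclude $\Phi_E^*s\in M_{i+1}\setminus M_i$. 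You stay on the kernel side and apply $H^0$ directly to the cartesian square. That step is legitimate: a cartesian square of monomorphisms is encoded by a left-exact sequence $0\to A\to B\oplus C\to D$, and $H^0$ is left exact; combined with the base-change identification $H^0(X\otimes E,\Phi^*\mathscr{F}_k)\cong\Phi_E^*M_k$ (which does use that $\Fr_E$ is an isomorphism for $E$ perfect), you get $\Phi_E^*M_i\cap M_i=\Phi_E^*M_{i-1}$ in $M_{i+1}$, and the strict inclusion $M_i\subsetneqq M_{i+1}$ follows element-freely since $\Phi_E^*$ preserves strict inclusions. The two arguments are dual readings of the same square---cokernel versus pullback---and yours re-derives the key step of \cref{increments in a sequence of shtukas are isomorphic under Frobenius} rather than invoking it. No gap.
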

\begin{proof}
  Pick $s\in M_i$ such that $s\notin M_{i-1}$. Then the image of $s$ in $H^0(X\otimes E,\mathscr{F}_i/\mathscr{F}_{i-1})$ is nonzero. Hence the image of $\Phi^*s$ under the composition $\Phi^*M_i\hookrightarrow M_{i+1}\to H^0(X\otimes E,\mathscr{F}_{i+1}/\mathscr{F}_i)$ is nonzero by \cref{increments in a sequence of shtukas are isomorphic under Frobenius}. This implies that $\Phi^*s\notin M_i$. The statement follows by induction.
\end{proof}

\begin{Lemma}\label{asymptotic behavior for H^0(F_i) when i<<0}
We have $M_i=M_A$ when $i$ is sufficiently small.
\end{Lemma}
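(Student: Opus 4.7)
The plan is as follows. First, I would exploit the uniform containment $\mathscr{A} \subseteq \mathscr{F}_i$ for every $i \in \mathbb{Z}$, which gives $M_A \subseteq M_i$ for every $i$. Combined with the injections $\mathscr{F}_{i-1} \hookrightarrow \mathscr{F}_i$ inducing $M_{i-1} \hookrightarrow M_i$, the sequence $(M_i)_{i \le 0}$ is a descending chain (as $i \to -\infty$) of finite-dimensional $E$-subspaces of $M_0$, all sandwiched between $M_A$ and $M_0$. Finite-dimensionality forces eventual stabilization: there exist $i_0 \le 0$ and a subspace $W$ with $M_A \subseteq W \subseteq M_0$ such that $M_i = W$ for all $i \le i_0$. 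The task reduces to proving $W = M_A$.

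To identify $W$ with $M_A$, I would package $W$ as a subsheaf: let $\mathscr{W} \subseteq \mathscr{F}_{i_0}$ be the image of the evaluation morphism $W \otimes_E \mathscr{O}_{X \otimes E} \to \mathscr{F}_{i_0}$. Since every element of $W$ is a global section of each $\mathscr{F}_i$ with $i \le i_0$, this morphism factors through $\mathscr{F}_i$, so $\mathscr{W}$ lies in every such $\mathscr{F}_i$. The goal is then to verify that $\mathscr{W}$ satisfies condition~(2) of \cref{maximal trivial sub and maximal trivial quotient} for the shtuka $\mathscr{F}_{i_0}$, which forces $\mathscr{W} \subseteq \mathscr{F}_{i_0}^{\II} = \mathscr{A}$ and hence $W \subseteq H^0(\mathscr{W}) \subseteq M_A$, as desired.

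The verification of condition~(2) rests on the identity $\Phi^*\mathscr{F}_{i_0 - 1} = \Phi^*\mathscr{F}_{i_0} \cap \mathscr{F}_{i_0}$ used in the proof of \cref{increments in a sequence of shtukas are isomorphic under Frobenius}. This identity implies that for $s \in W = M_{i_0 - 1}$, the shtuka structure carries $\Phi^* s$ into $\mathscr{F}_{i_0}$, hence into $M_{i_0} = W$; since $\Phi^*\mathscr{W}$ is generated by such $\Phi^* s$, its image in $\mathscr{F}_{i_0}(\beta \bcirc \Fr_E^{i_0})$ is contained in $\mathscr{W}$. Equality then follows from a rank and Euler characteristic comparison, using that $\Phi$ is an automorphism of $X \otimes E$ (since $E$ is perfect) and that the shtuka structure is injective, so that the quotient is a torsion sheaf of length zero.

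The main obstacle I anticipate is the book-keeping in the last step---ensuring that Frobenius iterates of stabilized sections really do remain in the stabilized subspace $W$ rather than in some strictly larger $M_j$. This hinges on combining the stabilization $M_{i_0 - 1} = M_{i_0}$ from the first step with the consequence of condition~(\ref{zero and pole not related by Frobenius}) embodied in $\Phi^*\mathscr{F}_{i_0 - 1} = \Phi^*\mathscr{F}_{i_0} \cap \mathscr{F}_{i_0}$.
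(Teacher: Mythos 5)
Your proposal is correct and follows essentially the same route as the paper: the paper forms $M_{-\infty}=\bigcap_i M_i$, uses finite-dimensionality to find $j$ with $M_j=M_{-\infty}$, shows $\Phi_E^*M_j=M_j$ by a dimension count, and then observes that the subsheaf generated by $M_j$ satisfies condition~(2), hence lies in $\mathscr{A}$. Your version is a reordering of the same steps (stabilize first, form $\mathscr{W}$, verify condition~(2) for $\mathscr{W}$, deduce $\mathscr{W}\subseteq\mathscr{A}$), with the equality $\Phi^*\mathscr{W}=\mathscr{W}$ obtained by Euler characteristic comparison on the sheaf level rather than dimension comparison on global sections; both work, and the argument is sound.
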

\begin{proof}
Put $M_{-\infty}=\bigcap_{i\in \mathbb{Z}}M_i$. We have $\Phi_E^*M_{-\infty}\subset M_{-\infty}$, and hence $\Phi^*_E M_{-\infty}=M_{-\infty}$ by dimension comparison. Since each $M_i$ is finite dimensional, there exists $j\in\mathbb{Z}$ such that $M_j=M_{-\infty}$. Thus $\Phi_E^* M_j=M_j$.  Let $\mathscr{E}$ be the $\mathscr{O}_{X\otimes E}$-submodule of $\mathscr{F}_j$ generated by its global sections. Then $\Phi^*\mathscr{F}_j\hookrightarrow\mathscr{F}_{j+1}$ induces an isomorphism $\Phi^*\mathscr{E}\xrightarrow{\sim}\mathscr{E}$. By the definition of $\mathscr{A}$ we have $\mathscr{E}\subset\mathscr{A}$ . Hence $M_j\subset M_A$. On the other hand, we have $\mathscr{A}\subset\mathscr{F}_j$. Therefore, $M_A=M_j=M_{-\infty}$, and we see that $M_i=M_A$ for all $i\le j$.
\end{proof}

\begin{Lemma}\label{asymptotic behavior for H^1(F_i) when i>>0}
The morphism $\mathscr{F}_i\to\mathscr{B}$ induces an isomorphism $H^1(X\otimes E, \mathscr{F}_i)\xrightarrow{\sim}H^1(X\otimes E,\mathscr{B})$ when $i$ is sufficiently large.
\end{Lemma}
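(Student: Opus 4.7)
The plan is to reduce to \cref{asymptotic behavior for H^0(F_i) when i<<0} via Serre duality. The identification $H^1(X\otimes E,\mathscr{F}_i)^{*}\cong H^0(X\otimes E,\mathscr{F}_i^\vee\otimes\widetilde{\omega})$ (where $\widetilde{\omega}$ is the pullback to $X\otimes E$ of the dualizing sheaf on $X$) converts the asymptotic behavior of $H^1(\mathscr{F}_i)$ for large $i$ into that of $H^0$ of a dual chain at very negative index. Accordingly, set $\mathscr{G}_0:=\mathscr{F}_0^\vee\otimes\widetilde{\omega}$; by Constructions C and D of \cref{(Section)general constructions for shtukas} this is a shtuka over $\Spec E$ with zero $\beta$ and pole $\alpha$ (both still satisfying condition~(\ref{zero and pole not related by Frobenius})), and with $\chi(\mathscr{G}_0)=0$. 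Let $\mathscr{G}_j:=(F_2)^j\mathscr{G}_0$ be the corresponding chain from \cref{notation for cohomology of shtukas over a perfect field}.

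The next step is to identify $\mathscr{F}_i^\vee\otimes\widetilde{\omega}$ with a member of the chain $\{\mathscr{G}_j\}$ up to a Frobenius twist. A direct zero/pole computation shows that Construction C intertwines the partial Frobeniuses $F_1$ and $F_2$, while Construction D commutes with both; hence $\mathscr{F}_i^\vee\otimes\widetilde{\omega}=F_1^i\mathscr{G}_0$. Over the perfect base $E$, the relation $F_1F_2\cong\Fr_E^*$ gives $F_1^i\cong(\Fr_E^*)^i\,F_2^{-i}$, so $\mathscr{F}_i^\vee\otimes\widetilde{\omega}\cong(\Fr_E^*)^i\mathscr{G}_{-i}$. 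Since pullback by the automorphism $\Fr_E$ preserves cohomology dimensions, $h^0(\mathscr{F}_i^\vee\otimes\widetilde{\omega})=h^0(\mathscr{G}_{-i})$.

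Applying \cref{asymptotic behavior for H^0(F_i) when i<<0} to $\{\mathscr{G}_j\}$ yields $H^0(\mathscr{G}_{-i})\cong H^0(\mathscr{A}_\mathscr{G})$ for $i$ sufficiently large, where $\mathscr{A}_\mathscr{G}:=(\mathscr{G}_0)^{\II}$. Using the dual description $(\mathscr{F}^\vee)^{\II}=(\mathscr{F}/\mathscr{F}^{\I})^\vee$ from the proof of \cref{maximal trivial sub and maximal trivial quotient}, together with the fact that $\otimes\widetilde{\omega}$ commutes with the formation of the maximal trivial sub (since $\widetilde{\omega}$ is Frobenius-invariant), we obtain $\mathscr{A}_\mathscr{G}\cong\mathscr{B}^\vee\otimes\widetilde{\omega}$. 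Serre duality then gives $h^0(\mathscr{A}_\mathscr{G})=h^1(\mathscr{B})=b$, so $h^1(\mathscr{F}_i)=b$ for $i$ large. The map $H^1(\mathscr{F}_i)\to H^1(\mathscr{B})$ induced by $\mathscr{F}_i\to\mathscr{B}$ is surjective (from the long exact sequence of $0\to\mathscr{F}_i^{\I}\to\mathscr{F}_i\to\mathscr{B}\to 0$ and the vanishing of $H^2$ on the curve $X\otimes E$), so the dimension equality forces it to be an isomorphism.

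The main obstacle I anticipate is the careful bookkeeping of how duality interacts with the partial Frobenius operators: one must verify that Construction C intertwines $F_1$ and $F_2$ and absorb the Frobenius twist introduced by the relation $F_1F_2\cong\Fr_E^*$. This is the step where perfectness of $E$ is essential.
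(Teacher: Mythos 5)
Your proof is correct and follows essentially the same route the paper sketches: Serre duality reduces the $H^1$ statement to the $H^0$ statement of \cref{asymptotic behavior for H^0(F_i) when i<<0} applied to the dual chain, with the Frobenius twist (your step via $F_1\cong\Fr_E^*F_2^{-1}$) playing the role of what the paper calls ``base change for $H^1$ under $\Phi^*$.'' You have simply filled in the details the paper leaves implicit, including the verification that duality intertwines $F_1$ and $F_2$, the identification $(\mathscr{G}_0)^{\II}\cong\mathscr{B}^\vee\otimes\widetilde{\omega}$, and the final surjectivity-plus-dimension-count argument.
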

\begin{proof}
The statement follows from \cref{asymptotic behavior for H^0(F_i) when i<<0}, Serre duality and base change for $H^1$ under $\Phi^*$.
\end{proof}

\begin{proof}[Proof of \cref{cohomology of shtukas over a field}]
  Since $\mathscr{F}_{i+1}/\mathscr{F}_i$ has length one for all $i\in \mathbb{Z}$, we know that $h^0(X\otimes E,\mathscr{F}_{i+1})-h^0(X\otimes E,\mathscr{F}_i)$ equals 0 or 1. Recall that $\chi(\mathscr{F}_i)=i$ for all $i\in \mathbb{Z}$. Now the statement follows from \cref{increase of H^0 after the first one,asymptotic behavior for H^0(F_i) when i<<0,asymptotic behavior for H^1(F_i) when i>>0}.
\end{proof}

\section{Deformation of Shtukas}
In this section, we fix a field $E$ over $\mathbb{F}_q$. For a scheme $S$ over $\Spec E$, we denote $\mathsf{p}_S:S\to\Spec E$ to be the structure morphism.

\subsection{The category $\ArtAlg_E$}\label{Section: the category of Artinian E-algebras for deformation}
We define a category $\ArtAlg_E$. Its objects are local Artinian $E$-algebras whose residue fields are identified with $E$. We further require that for any $A\in\ArtAlg_E$, the composition $E\to A\to E$ is the identity. Morphisms in $\ArtAlg_E$ are local homomorphisms of $E$-algebra which induce the identity on the residue field.

Define $\ArtAlg_E^{(n)}$ to be the full subcategory of $\ArtAlg_E$ whose objects satisfy the condition that $x^{q^n}=0$ for any $x$ in the maximal ideal.

For any $A\in \ArtAlg_E$, let $\mathsf{p}_A:\Spec A\to \Spec E$ be the structure morphism, and let $e_A:\Spec E\to \Spec A$ be the closed point.

\subsection{Upper and lower modifications of vector bundles}
Let $S$ be a scheme over $\mathbb{F}_q$ equipped with a morphism $\lambda: S\to X$. Let $\mathscr{E}$ be a locally free sheaf of finite rank on $X\times S$. By a \emph{upper modification} of $\mathscr{E}$ at $\lambda$ we mean a locally free sheaf $\mathscr{E}'$ on $X\times S$ together with an injective morphism $\mathscr{E}\hookrightarrow\mathscr{E}'$ such that $\mathscr{E}'/\mathscr{E}$ is an invertible sheaf on $\Gamma_{\lambda}$. By a \emph{lower modification} of $\mathscr{E}$ at $\lambda$ we mean a locally free sheaf $\mathscr{E}'$ together with an injective morphism $\mathscr{E}'\hookrightarrow\mathscr{E}$ such that $\mathscr{E}/\mathscr{E}'$ is an invertible sheaf on $\Gamma_{\lambda}$.

\begin{Remark}\label{modifications at disjoint closed subschemes}
  Let $\lambda_1,\dots,\lambda_m$ be morphisms from $S$ to $X$. Suppose we are given upper/lower modifications $\mathscr{E}_i$ of a locally free sheaf $\mathscr{E}$ at each $\lambda_i$. If $\Gamma_{\lambda_i}(1\le i\le m)$ are mutually disjoint, we can form a locally free sheaf by applying modifications consecutively at each $\lambda_i$ according to $\mathscr{E}_i$, and the sheaf we obtain is independent of the order of modifications.
\end{Remark}

\begin{Remark}\label{bijection between modifications and morphisms to projective spaces}
  Let $T$ be a scheme over $\Spec E$ with structure morphism $\mathsf{p}_T:T\to\Spec E$. Let $\mathscr{E}$ be a locally free sheaf of finite rank on $X\times T$. For a morphism $\mu:\Spec E\to X$, there is a natural bijection between $E$-morphisms from $T$ to $\mathbb{P}(\mathscr{E}(\mu)/\mathscr{E})$ (resp. $\mathbb{P}(\mathscr{E}/\mathscr{E}(-\mu))$) and upper (resp. lower) modifications of $\mathscr{E}$ at $\mu\bcirc\mathsf{p}_T$.
\end{Remark}

\subsection{Notation for the shtuka to be deformed}
Fix a shtuka $\mathscr{F}$ over $\Spec E$ with zero $\alpha$ and pole $\beta$ satisfying condition (\ref{Frobenius shifts of zero and pole mutually disjoint}).

We put $\tensor[^{\tau^i}]{\mathscr{F}}{}=(\Phi_E^*)^i\mathscr{F}$.

We know that $\tensor[^{\tau^i}]{\mathscr{F}}{}$  is obtained from $\tensor[^{\tau^{i+1}}]{\mathscr{F}}{}$ by an upper modification at $\Fr_X^i\bcirc\alpha$ and a lower modification at $\Fr_X^i\bcirc\beta$. Note that for $i>0$, $\Gamma_{\Fr_X^{i}\bcirc\alpha}$ (resp. $\Gamma_{\Fr_X^{i}\bcirc\beta}$) is disjoint from $\Gamma_\alpha$ (resp. $\Gamma_\beta$). So we get natural isomorphisms
\begin{equation}\label{isomorphism of fiber at zero}
(\tensor[^\tau]{\mathscr{F}}{})(\alpha)/\tensor[^\tau]{\mathscr{F}}{}
\cong(\tensor[^{\tau^2}]{\mathscr{F}}{})(\alpha)/\tensor[^{\tau^2}]{\mathscr{F}}{}
\cong(\tensor[^{\tau^3}]{\mathscr{F}}{})(\alpha)/\tensor[^{\tau^3}]{\mathscr{F}}{}
\cong\dots
\end{equation}
\begin{equation}\label{isomorphism of fiber at pole}
\tensor[^\tau]{\mathscr{F}}{}/(\tensor[^\tau]{\mathscr{F}}{})(-\beta)
\cong\tensor[^{\tau^2}]{\mathscr{F}}{}/(\tensor[^{\tau^2}]{\mathscr{F}}{})(-\beta)
\cong\tensor[^{\tau^3}]{\mathscr{F}}{}/(\tensor[^{\tau^3}]{\mathscr{F}}{})(-\beta)
\cong\dots
\end{equation}
induced by upper and lower modifications.

We denote $P^\sharp=\mathbb{P}^\vee((\tensor[^\tau]{\mathscr{F}}{})(\alpha)/\tensor[^\tau]{\mathscr{F}}{})$, $P^\flat=\mathbb{P}(\tensor[^\tau]{\mathscr{F}}{}/(\tensor[^\tau]{\mathscr{F}}{})(-\beta))$, $P^\natural=P^\sharp\times P^\flat$.

For an $E$-scheme $S$, we denote $\mathsf{p}_S: S\to\Spec E$ to be the structure morphism. When $i\ge 1$, \cref{bijection between modifications and morphisms to projective spaces} says that there is a natural bijection between morphisms from $S$ to $P^\sharp$ (resp. $P^\flat$) and upper (resp. lower) modifications of $(\Id_X\times \mathsf{p}_S)^*(\tensor[^{\tau^i}]{\mathscr{F}}{})$ at $\alpha\bcirc\mathsf{p}_S$ (resp. $\beta\bcirc\mathsf{p}_S$).

We know that $\mathscr{F}$ is obtained from $\tensor[^\tau]{\mathscr{F}}{}$ by an upper modification at $\alpha$ and a lower modification at $\beta$. Let $p^\sharp:\Spec E\to P^\sharp$ (resp. $p^\flat:\Spec E\to P^\flat$) be the $E$-point corresponding to that upper (resp. lower) modification.

\subsection{Infinitesimal deformation of shtukas}
Let $A\in\ArtAlg_E$. A deformation of the given shtuka $\mathscr{F}$ to $\Spec A$ is a shtuka $\widetilde{\mathscr{F}}$ over $\Spec A$ together with an isomorphism $e_A^*\widetilde{\mathscr{F}}\xrightarrow{\sim}\mathscr{F}$. We say that two deformations $\widetilde{\mathscr{F}}_1$ and $\widetilde{\mathscr{F}}_2$ are isomorphic if there is an isomorphism $\widetilde{\mathscr{F}}_1\xrightarrow{\sim}\widetilde{\mathscr{F}}_2$ of shtukas compatible with the isomorphisms $e_A^*\widetilde{\mathscr{F}}_1\xrightarrow{\sim}\mathscr{F}$ and $e_A^*\widetilde{\mathscr{F}}_2\xrightarrow{\sim}\mathscr{F}$.

\subsection{Universal Deformation}\label{section: universal deformation}
Let $G_1:\ArtAlg_E\to \Sets$ be the functor which associates to each $A\in \ArtAlg_E$ the set of isomorphism classes of deformations of the given  shtuka $\mathscr{F}$ to $\Spec A$ with zero $\alpha\bcirc\mathsf{p}_A$ and pole $\beta\bcirc\mathsf{p}_A$. Let $G_2:\ArtAlg_E\to \Sets$ be the functor which associates to each $A\in \ArtAlg_E$ the set of $E$-morphisms $f:\Spec A\to P^\natural$ such that $f\bcirc e_A=p^\natural$. Let $G_1^{(n)},G_2^{(n)}$ be the restrictions of the functors $G_1,G_2$ to $\ArtAlg_E^{(n)}$.

Now we define a morphism of functors $\xi:G_1\to G_2$. Let $A$ be an object of $\ArtAlg_E^{(n)}$. Let $\widetilde{\mathscr{F}}$ be a deformation of the given shtuka to $\Spec A$ with zero $\alpha\bcirc\mathsf{p}_A$ and pole $\beta\bcirc\mathsf{p}_A$.  Then we have an isomorphism
\[(\Phi_A^*)^n\widetilde{\mathscr{F}}\cong (\Id_X\times \mathsf{p}_A)^*(\tensor[^{\tau^n}]{\mathscr{F}}{})\]
So $\widetilde{\mathscr{F}}$ is obtained from $(\Id_X\times \mathsf{p}_A)^*(\tensor[^{\tau^n}]{\mathscr{F}}{})$ by upper modifications at $\alpha\bcirc\mathsf{p}_A$, $\Fr_X\bcirc\alpha\bcirc\mathsf{p}_A$, $\dots$, $\Fr_X^{n-1}\bcirc\alpha\bcirc\mathsf{p}_A$ and lower modifications at $\beta\bcirc\mathsf{p}_A,\Fr_X\bcirc\beta\bcirc\mathsf{p}_A,\dots,\Fr_X^{n-1}\bcirc\beta\bcirc\mathsf{p}_A$. Note that the modifications are at disjoint closed subschemes of $X\otimes A$ since $\alpha$ and $\beta$ satisfy condition (\ref{Frobenius shifts of zero and pole mutually disjoint}), so \cref{modifications at disjoint closed subschemes} applies.

For the modifications at $\alpha\bcirc\mathsf{p}_A$ and $\beta\bcirc\mathsf{p}_A$, we get two morphisms $f^\sharp:\Spec A\to P^\sharp$ and $f^\flat:\Spec A\to P^\flat$. Put $f^\natural=f^\sharp\times f^\flat:\Spec A\to P^\natural$. We see that $f^\natural\bcirc e_A=p^\natural$. If two deformations are isomorphic, they are obtained from the same sheaf $(\Id_X\times \mathsf{p}_A)^*(\tensor[^{\tau^n}]{\mathscr{F}}{})$ by the same modifications. This defines a morphism $\xi_A^{(n)}:G_1^{(n)}(A)\to G_2^{(n)}(A)$ which is functorial in $A$. Isomorphisms (\ref{isomorphism of fiber at zero}) and (\ref{isomorphism of fiber at pole}) show that $\xi_A^{(n)}=\xi_A^{(m)}$ for all $m\ge n$. Thus we get a morphism of functors $\xi:G_1\to G_2$.

\begin{Proposition}\label{infinitesimal neighborhood of moduli space}
The morphism of functors $\xi:G_1\to G_2$ defined above is an isomorphism.
\end{Proposition}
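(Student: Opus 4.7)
The plan is to construct an explicit inverse $\eta:G_2\to G_1$ of $\xi$. Fix $A\in\ArtAlg_E^{(n)}$ and $f^\natural=(f^\sharp,f^\flat)\in G_2(A)$. By \cref{bijection between modifications and morphisms to projective spaces} combined with the isomorphisms (\ref{isomorphism of fiber at zero}) and (\ref{isomorphism of fiber at pole}), the morphism $f^\sharp$ specifies an upper modification of $(\Id_X\times\mathsf{p}_A)^*(\tensor[^{\tau^j}]{\mathscr{F}}{})$ at $\alpha\bcirc\mathsf{p}_A$ for each $j\ge 1$, and similarly $f^\flat$ gives lower modifications at $\beta\bcirc\mathsf{p}_A$. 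Using the commutation $\Phi_A^*\bcirc(\Id_X\times\mathsf{p}_A)^*=(\Id_X\times\mathsf{p}_A)^*\bcirc\Phi_E^*$ (which rests on $\mathsf{p}_A\bcirc\Fr_A=\Fr_E\bcirc\mathsf{p}_A$), iterated application of $\Phi_A^*$ transports these to modification data at the Frobenius-shifted loci $\Fr_X^i\bcirc\alpha\bcirc\mathsf{p}_A$ and $\Fr_X^i\bcirc\beta\bcirc\mathsf{p}_A$ for $0\le i\le n-1$. Define $\eta(f^\natural)=\widetilde{\mathscr{F}}$ to be the sheaf obtained from $(\Id_X\times\mathsf{p}_A)^*(\tensor[^{\tau^n}]{\mathscr{F}}{})$ by applying all $2n$ such modifications; the loci are mutually disjoint by condition (\ref{Frobenius shifts of zero and pole mutually disjoint}), so \cref{modifications at disjoint closed subschemes} makes the construction order-independent.

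The shtuka structure on $\widetilde{\mathscr{F}}$ arises from comparing it with $\Phi_A^*\widetilde{\mathscr{F}}$. The latter is obtained from $(\Id_X\times\mathsf{p}_A)^*(\tensor[^{\tau^{n+1}}]{\mathscr{F}}{})$ by the $\Phi_A^*$-pullbacks of the above modifications, located at $\Fr_X^i$ of $\alpha,\beta$ for $1\le i\le n$. Meanwhile $(\Id_X\times\mathsf{p}_A)^*(\tensor[^{\tau^n}]{\mathscr{F}}{})$ differs from $(\Id_X\times\mathsf{p}_A)^*(\tensor[^{\tau^{n+1}}]{\mathscr{F}}{})$ exactly by the base-changes of the $E$-level partial Frobenius modifications at $\Fr_X^n\bcirc\alpha$ and $\Fr_X^n\bcirc\beta$ (which arise from $p^\sharp,p^\flat$). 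After cancellation, $\widetilde{\mathscr{F}}$ and $\Phi_A^*\widetilde{\mathscr{F}}$ differ by precisely an upper modification at $\alpha\bcirc\mathsf{p}_A$ and a lower modification at $\beta\bcirc\mathsf{p}_A$, giving the injection $\Phi_A^*\widetilde{\mathscr{F}}\hookrightarrow\widetilde{\mathscr{F}}(\beta\bcirc\mathsf{p}_A)$ with the cokernel of a shtuka having zero $\alpha\bcirc\mathsf{p}_A$ and pole $\beta\bcirc\mathsf{p}_A$. Finally, $\xi\bcirc\eta=\Id$ is immediate from construction, while $\eta\bcirc\xi=\Id$ follows because for any deformation the iterated shtuka morphism identifies its intermediate modifications at $\Fr_X^i\bcirc\alpha,\Fr_X^i\bcirc\beta$ ($1\le i\le n-1$) with the $\Phi_A^{*i}$-pullbacks of those at $\alpha,\beta$, matching what $\eta$ reconstructs.

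The hard part is the cancellation step: reconciling, at the locus $\Fr_X^n\bcirc\alpha\bcirc\mathsf{p}_A$, the base-changed $E$-level partial Frobenius modification with the $\Phi_A^{*n}$-pullback of the modification coming from $f^\sharp$ (and analogously for $\beta$). This is where the defining property of $\ArtAlg_E^{(n)}$ enters decisively: since $x^{q^n}=0$ for every $x$ in the maximal ideal of $A$, on schemes one has $\Fr_A^n=e_A\bcirc\Fr_E^n\bcirc\mathsf{p}_A$, so the $n$-th Frobenius pullback of $f^\sharp$ factors through $e_A^*f^\sharp=p^\sharp$ and yields precisely the $E$-level datum after base change. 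Without this identity the two modifications at $\Fr_X^n$ would disagree and the Frobenius map on $\widetilde{\mathscr{F}}$ would fail to be well-defined.
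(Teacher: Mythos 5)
Your proposal is correct and follows essentially the same strategy as the paper's proof: the construction of $\eta$ (iterated $\Phi_A^*$-pullback of the modification data defined by $f^\natural$ applied to $(\Id_X\times\mathsf{p}_A)^*(\tensor[^{\tau^n}]{\mathscr{F}}{})$, with the coincidence verification at $\Fr_X^n$ using $A\in\ArtAlg_E^{(n)}$) is precisely the paper's surjectivity argument, and your verification that $\eta\bcirc\xi=\Id$ is the paper's injectivity argument that the modifications at $\alpha\bcirc\mathsf{p}_A$ and $\beta\bcirc\mathsf{p}_A$ determine the others via $\Phi_A^{*i}$-pullback. The only novelty is cosmetic: you package injectivity and surjectivity as an explicit two-sided inverse, and you express the role of $\ArtAlg_E^{(n)}$ through the clean scheme-level factorization $\Fr_A^n=e_A\bcirc\Fr_E^n\bcirc\mathsf{p}_A$, which is a nice reformulation of the paper's more ad hoc coincidence-of-modifications statement.
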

\begin{proof}
Let $A\in \ArtAlg_E^{(n)}$.

To prove that $\xi_A^{(n)}$ is injective, it suffices to show that in the above construction the modifications for the pair $((\Id_X\times \mathsf{p}_A)^*(\tensor[^{\tau^n}]{\mathscr{F}}{}),\mathscr{F})$ at $\alpha\bcirc\mathsf{p}_A$ and $\beta\bcirc\mathsf{p}_A$ determines the modifications at other places. For the upper modifications, this follows from the fact that the upper modification $\tensor[^{\tau^{i+1}}]{\mathscr{F}}{}\hookrightarrow(\tensor[^{\tau^i}]{\mathscr{F}}{}+\tensor[^{\tau^{i+1}}]{\mathscr{F}}{})$ at $\Gamma_{\Fr^i_X\bcirc \alpha\bcirc\mathsf{p}_A}$ is obtained from the upper modification $\tensor[^\tau]{\mathscr{F}}{}\hookrightarrow(\mathscr{F}+\tensor[^\tau]{\mathscr{F}}{})$ at $\alpha\bcirc\mathsf{p}_A$ by the pullback $(\Phi_A^*)^i$. The proof for the lower modifications is similar.

To prove that $\xi_A^{(n)}$ is surjective, for any morphism $f^\natural:\Spec A\to P^\natural$ such that $f\bcirc e_A=p^\natural$, we construct a deformation of the given shtuka to $\Spec A$ with zero $\alpha\bcirc\mathsf{p}_A$ and pole $\beta\bcirc\mathsf{p}_A$ such that the corresponding morphism is $f^\natural$.

Put $\mathscr{E}_i=(\Id_X\times \mathsf{p}_A)^*(\tensor[^{\tau^i}]{\mathscr{F}}{})$. Write $f^\natural=f^\sharp\times f^\flat:\Spec A\to P^\sharp\times P^\flat$.

Let $\mathscr{E}_i\hookrightarrow\mathscr{E}^\sharp_i$ be the upper modification at $\alpha\bcirc\mathsf{p}_A$ corresponding to $f^\sharp$. Let $\mathscr{E}_i\hookleftarrow\mathscr{E}^{\flat}_i$ be the lower modification at $\beta\bcirc\mathsf{p}_A$ corresponding to $f^\flat$. Then $\mathscr{E}_n\hookrightarrow(\Phi_A^*)^i \mathscr{E}_{n-i}^\sharp$ is an upper modification at $\Fr_X^i\bcirc\alpha\bcirc\mathsf{p}_A$, and $\mathscr{E}_n\hookleftarrow(\Phi_A^*)^i \mathscr{E}_{n-i}^\flat$ is a lower modification at $\Fr_X^i\bcirc\beta\bcirc\mathsf{p}_A$. Let $\mathscr{E}_n^\natural$ be the sheaf obtained from $\mathscr{E}_n$ by upper modifications at $\Fr_X^i\bcirc\alpha\bcirc\mathsf{p}_A$ according to $\mathscr{E}_n\hookrightarrow(\Phi_A^*)^i \mathscr{E}^{\sharp}_{n-i}$ and lower modifications at $\Fr_X^i\bcirc\beta\bcirc\mathsf{p}_A$ according to $\mathscr{E}_n\hookleftarrow(\Phi_A^*)^i \mathscr{E}^{\flat}_{n-i}$ for $i=0,1,\dots,n-1$.

Then $\Phi_A^*\mathscr{E}_n^\natural$ is obtained from $\mathscr{E}_{n+1}$ by upper modifications at $\Fr_X^i\bcirc\alpha\bcirc\mathsf{p}_A$ and lower modifications at $\Fr_X^i\bcirc\beta\bcirc\mathsf{p}_A$ for $i=1,2,\dots,n$. Since $A\in \ArtAlg_E^{(n)}$, the modifications for the pair $(\mathscr{E}_{n+1},\Phi_A^*\mathscr{E}_n^\natural)$ and the pair $(\mathscr{E}_{n+1},\mathscr{E}_n)$ coincide at $\Fr^n_X\bcirc\alpha\bcirc\mathsf{p}_A$ and $\Fr^n_X\bcirc\beta\bcirc\mathsf{p}_A$. From our construction, the modifications for the pair $(\mathscr{E}_{n+1},\Phi_A^*\mathscr{E}_n^\natural)$ and the pair $(\mathscr{E}_{n},\mathscr{E}_n^\natural)$ coincide at $\Fr_X^i\bcirc\alpha\bcirc\mathsf{p}_A$ and $\Fr_X^i\bcirc\beta\bcirc\mathsf{p}_A$ for $i=1,2,\dots,n-1$. Therefore, $\mathscr{E}_n^\natural$ is obtained from $\Phi^*_A\mathscr{E}_n^\natural$ by an upper modification defined by $f^\sharp $ and a lower modification defined by $f^\flat$.
\end{proof}

\section{Multiplicity: Infinitesimal Calculation}
In this section, we use deformation theory to study cohomology of shtukas on infinitesimal neighborhoods of a point on the moduli stack. The goal is to prove \cref{deformation in both directions}.

\subsection{Notation}
Fix a perfect field $E$ over $\mathbb{F}_q$ and fix two morphisms $\alpha,\beta:\Spec E\to X$ satisfying condition (\ref{Frobenius shifts of zero and pole mutually disjoint}). For a scheme $S$ over $\Spec E$, we denote $\mathsf{p}_S:S\to\Spec E$ to be the structure morphism.

Let $\mathscr{B}$ be an invertible sheaf on $X\otimes E$ equipped with an isomorphism $\Phi_E^*\mathscr{B}\xrightarrow{\sim}\mathscr{B}$. We put $h=h^1(X\otimes E,\mathscr{B})$. Let $n$ denote a positive integer greater than or equal to $h$.

Fix a shtuka $\mathscr{F}$ over $\Spec E$ of rank $d$ with zero $\alpha$ and pole $\beta$ such that $\chi(\mathscr{F})=0$. Assume that $\mathscr{F}/\mathscr{F}^{\I}\cong \mathscr{B}$ and $\mathscr{F}^{\II}=0$, where $\mathscr{F}^{\I}$ and $\mathscr{F}^{\II}$ are the subsheaves of $\mathscr{F}$ as in \cref{maximal trivial sub and maximal trivial quotient}.

Recall that the category $\ArtAlg_E^{(n)}$ is defined in \cref{Section: the category of Artinian E-algebras for deformation}. Let $A\in \ArtAlg_E^{(n)}$, and let $\widetilde{\mathscr{F}}$ be a deformation of the given shtuka to $\Spec A$ with zero $\alpha\bcirc\mathsf{p}_A$ and pole $\beta\bcirc\mathsf{p}_A$.

Put $\mathscr{E}_i=(\Id_X\times\mathsf{p}_A)^*(\tensor[^{\tau^i}]{\mathscr{F}}{})$ for $i=0,1,\dots,n$. Then $\widetilde{\mathscr{F}}$ is obtained from $\mathscr{E}_n$ by upper modifications at $\alpha\bcirc\mathsf{p}_A,\Fr_X\bcirc\alpha\bcirc\mathsf{p}_A,\dots,\Fr_X^{n-1}\bcirc\alpha\bcirc\mathsf{p}_A$ and lower modifications at $\beta\bcirc\mathsf{p}_A,\Fr_X\bcirc\beta\bcirc\mathsf{p}_A,\dots,\Fr_X^{n-1}\bcirc\beta\bcirc\mathsf{p}_A$.

Denote $P^\sharp=\mathbb{P}^\vee(\tensor[^\tau]{\mathscr{F}}{}(\alpha)/\tensor[^\tau]{\mathscr{F}}{})$, $P^\flat=\mathbb{P}(\tensor[^\tau]{\mathscr{F}}{}/\tensor[^\tau]{\mathscr{F}}{}(-\beta))$, $P^\natural=P^\sharp\times P^\flat$.

Let $f^\sharp:\Spec A\to P^\sharp$ be the morphism corresponding to the upper modification of $\mathscr{E}_n$ at $\alpha\bcirc\mathsf{p}_A$, and $f^\flat:\Spec A\to P^\flat$ be the morphism corresponding to the lower modification of $\mathscr{E}_n$ at $\beta\bcirc\mathsf{p}_A$. Put $f^\natural=f^\sharp\times f^\flat$.

Let $\mathscr{E}^\sharp_i$ be the upper modification of $\mathscr{E}_i$ at $\alpha\bcirc\mathsf{p}_A$ defined by $f^\sharp$. Let $\mathscr{E}^{\flat}_i$ be the lower modification of $\mathscr{E}_i$ at $\beta\bcirc\mathsf{p}_A$ defined by $f^\flat$.

For $i=0,1,\dots,n-1$, let $\mathscr{E}_n\hookrightarrow\mathscr{E}_n^{\sharp i}$ be the upper modification at $\Fr_X^i\bcirc\alpha\bcirc\mathsf{p}_A$ according to $(\mathscr{E}_n,\widetilde{\mathscr{F}})$, and $\mathscr{E}_n\hookleftarrow\mathscr{E}_n^{\flat i}$ be the lower modification of $\mathscr{E}_n$ at $\Fr_X^i\bcirc\beta\bcirc\mathsf{p}_A$ according to $(\mathscr{E}_n,\widetilde{\mathscr{F}})$.

\cref{infinitesimal neighborhood of moduli space} shows that $\mathscr{E}_n^{\sharp i}\cong (\Phi_A^*)^i\mathscr{E}_{n-i}^\sharp$ and $\mathscr{E}_n^{\flat i}\cong (\Phi_A^*)^i\mathscr{E}_{n-i}^\flat$.

Let $\mathscr{E}_n^+=\sum_{i=0}^{n-1}\mathscr{E}_n^{\sharp i}$ and $\mathscr{E}_n^-=\bigcap_{i=0}^{n-1}\mathscr{E}_n^{\flat i}$. Put $\mathscr{E}_n^\pm=\widetilde{\mathscr{F}}$. So $\mathscr{E}_n^\pm$ is the unique subsheaf of $\mathscr{E}_n^+$ such that $\mathscr{E}_n^\pm + \mathscr{E}_n=\mathscr{E}_n^+$ and $\mathscr{E}_n^\pm\cap\mathscr{E}_n=\mathscr{E}_n^-$.

\subsection{Decrease of $H^1$}
\begin{Lemma}\label{injectivity of the boundary map for generic point of X}
Let $\mathscr{B}_k=\mathscr{B}(\sum_{i=0}^{k-1}\Fr_X^i\bcirc\alpha)$. Then the coboundary map $\delta:H^0(X\otimes E, \mathscr{B}_k/\mathscr{B})\to H^1(X\otimes E,\mathscr{B})$ is injective when $1\le k\le h= h^1(X\otimes E, \mathscr{B})$.
\end{Lemma}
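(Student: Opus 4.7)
The plan is to reformulate the statement via Serre duality and then exploit the Frobenius isomorphism $\Phi^*\mathscr{B} \xrightarrow{\sim} \mathscr{B}$ iteratively. First I would pass to the dual problem: from the long exact cohomology sequence of $0 \to \mathscr{B} \to \mathscr{B}_k \to \mathscr{B}_k/\mathscr{B} \to 0$, injectivity of $\delta$ is equivalent to $h^1(\mathscr{B}_k) = h - k$. Since the canonical bundle $\omega_{X \otimes E}$ is pulled back from $X$ and is therefore canonically $\Phi$-invariant, Serre duality converts this into $h^0(\mathscr{M}_k) = h - k$, where $\mathscr{M} = \mathscr{B}^{-1} \otimes \omega_{X \otimes E}$ and $\mathscr{M}_k = \mathscr{M}(-\sum_{i=0}^{k-1}\Fr_X^i\bcirc\alpha)$.

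The given $\Phi^*\mathscr{B} \cong \mathscr{B}$ together with the canonical $\Phi^*\omega_{X \otimes E} \cong \omega_{X \otimes E}$ yields $\Phi^*\mathscr{M} \cong \mathscr{M}$, inducing a $q$-semilinear automorphism $\phi$ of the finite-dimensional $E$-vector space $H^0(\mathscr{M})$ (bijective because $E$ is perfect). The crucial structural fact is a vanishing shift: since $\Fr_X \bcirc \alpha = \alpha \bcirc \Fr_E$, one has $\Phi^{-1}(\Gamma_{\Fr_X^i\bcirc\alpha}) = \Gamma_{\Fr_X^{i+1}\bcirc\alpha}$, so for any $s \in H^0(\mathscr{M})$, the section $\phi^{-1}(s)$ vanishes at $\Fr_X^j\bcirc\alpha$ if and only if $s$ vanishes at $\Fr_X^{j+1}\bcirc\alpha$.

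I would then induct on $k$. Assuming $h^0(\mathscr{M}_{k-1}) = h - k + 1 \ge 1$, the inequality $h^0(\mathscr{M}_k) \ge h - k$ is automatic, so it suffices to rule out the bad case that $\Fr_X^{k-1}\bcirc\alpha$ is a base point of $|\mathscr{M}_{k-1}|$, equivalently $H^0(\mathscr{M}_{k-1}) = H^0(\mathscr{M}_k)$. Under this bad supposition, I would show by a secondary induction on $m \ge 0$ that every $s \in H^0(\mathscr{M}_{k-1})$ vanishes at $\Fr_X^{k-1+m}\bcirc\alpha$. The base $m = 0$ is the bad hypothesis. For the inductive step, if $s$ vanishes at $\Fr_X^j\bcirc\alpha$ for $0 \le j \le k-1+m$, the shift property forces $(\phi^{-1})^{m+1}(s) \in H^0(\mathscr{M}_{k-1})$; applying the bad hypothesis to this element and shifting back yields vanishing of $s$ at $\Fr_X^{k+m}\bcirc\alpha$.

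The contradiction then comes from condition (\ref{Frobenius shifts of zero and pole mutually disjoint}): the graphs $\Gamma_{\Fr_X^j\bcirc\alpha}$ are mutually disjoint codimension-one closed points of $X \otimes E$, whereas a nonzero section of the line bundle $\mathscr{M}_{k-1}$ has a zero divisor of finite degree and so vanishes at only finitely many of them. Since $h^0(\mathscr{M}_{k-1}) \ge 1$ such a nonzero $s$ exists, giving the contradiction. The main obstacle is conceptual rather than computational: once the Frobenius shift is set up, the iterative cascade is routine, and the crucial use of condition (\ref{Frobenius shifts of zero and pole mutually disjoint}) enters only at the final step, where it provides infinitely many distinct closed points at which a finite-degree zero divisor cannot simultaneously lie.
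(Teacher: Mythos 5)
Your proof is correct, but it takes a genuinely different route from the paper's. The paper works directly with $H^0(\mathscr{B}_k)$: it lets $m$ be the first index at which $h^0(\mathscr{B}_m)>h^0(\mathscr{B})$, picks a section $s$ with nonzero image in $H^0(\mathscr{B}_m/\mathscr{B}_{m-1})$, and uses the Frobenius semilinearity $(\Phi_E^*)^j\mathscr{B}_m\hookrightarrow\mathscr{B}_{m+j}$ together with condition (\ref{Frobenius shifts of zero and pole mutually disjoint}) to show that $s,\Phi_E^*s,\dots,(\Phi_E^*)^js$ are linearly independent modulo $H^0(\mathscr{B})$; equating $h^0(\mathscr{B}_{m+j})=h^0(\mathscr{B})+j+1$ with the Riemann--Roch value for $j\gg 0$ then forces $m=h+1$. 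You instead pass through Serre duality to $\mathscr{M}=\mathscr{B}^{-1}\otimes\omega$, reformulate the claim as $h^0(\mathscr{M}_k)=h-k$, and show that if the sequence $h^0(\mathscr{M}_k)$ ever stalls (i.e.\ $\Fr_X^{k-1}\bcirc\alpha$ is a base point of $\mathscr{M}_{k-1}$), the Frobenius shift $\Phi^{-1}(\Gamma_{\Fr_X^i\bcirc\alpha})=\Gamma_{\Fr_X^{i+1}\bcirc\alpha}$ propagates the base point to all $\Fr_X^j\bcirc\alpha$ with $j\ge k-1$, contradicting the finiteness of the zero divisor of a nonzero section. The two arguments are Serre-dual to one another: ``the first $h^0(\mathscr{B}_k)$-jump propagates forward'' is exactly ``a stall in $h^0(\mathscr{M}_k)$ propagates forward,'' and both exploit condition (\ref{Frobenius shifts of zero and pole mutually disjoint}) to guarantee the Frobenius-shifted graphs of $\alpha$ are pairwise distinct. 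The paper's version is slightly more economical (no appeal to Serre duality and the concluding contradiction is a one-line Riemann--Roch count rather than a base-point cascade), while yours makes the geometric mechanism --- base points propagating under Frobenius --- more transparent. Both proofs are sound.
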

\begin{proof}
Let $m$ be the smallest positive integer such that $h^0(\mathscr{B}_m)>h^0(\mathscr{B})$. Choose $s\in H^0(X\otimes E,\mathscr{B}_m)$ such that $s\notin H^0(X\otimes E,\mathscr{B})$. Then $s$ has nonzero image in $H^0(X\otimes E, \mathscr{B}_m/\mathscr{B}_{m-1})$. The isomorphism $\Phi_E^*\mathscr{B}\xrightarrow{\sim}\mathscr{B}$ induces for each $j\ge 0$  an isomorphism $(\Phi_E^*)^j\mathscr{B}_m\xrightarrow{\sim}\mathscr{B}(\sum_{i=j}^{m+j-1}\Fr_X^i\bcirc\alpha)\subset \mathscr{B}_{m+j}$. Since $\Gamma_{\Fr_X^{i}\bcirc\alpha}$ are mutually disjoint for all $i$ by condition (\ref{Frobenius shifts of zero and pole mutually disjoint}), the element $(\Phi_E^*)^j s\in H^0(X\otimes E,(\Phi_E^*)^j\mathscr{B}_m)\subset H^0(X\otimes E,\mathscr{B}_{m+j})$ has nonzero image in $H^0(X\otimes E, \mathscr{B}_{m+j}/\mathscr{B}_{m+j-1})$. Therefore, for all $j\ge 0$, the elements $s,\Phi_E^* s,\dots,(\Phi_E^*)^j s$  in $H^0(X\otimes E,\mathscr{B}_{m+j})$ are linearly independent, and their linear span has zero intersection with $H^0(X\otimes E,\mathscr{B})$. Thus $h^0(\mathscr{B}_{m+j})= h^0(\mathscr{B})+j+1$. When $j$ is sufficiently large, we have
\[\chi(\mathscr{B})+m+j=\chi(\mathscr{B}_{m+j})=h^0(\mathscr{B}_{m+j})=h^0(\mathscr{B})+j+1=\chi(\mathscr{B})+h+j+1\]
Thus $m=h+1$. This means that $h^0(\mathscr{B})=h^0(\mathscr{B}_k)$ when $1\le k\le h$. The statement follows.
\end{proof}

\subsection{Identification of the boundary maps}
By isomorphism (\ref{isomorphism of fiber at zero}) we can identify ${\pi_E}_*(\tensor[^{\tau^i}]{\mathscr{F}}{}(\alpha)/\tensor[^{\tau^i}]{\mathscr{F}}{})$ for all $i>0$. Since $E$ is perfect, $\alpha$ and $\beta$ satisfy condition (\ref{Frobenius shifts of zero and pole mutually disjoint}), $\chi(\mathscr{F})=0$, $\mathscr{F}/\mathscr{F}^{\I}\cong\mathscr{B}$, and $\mathscr{F}^{\II}=0$, we have an isomorphism $R^1{\pi_E}_*\mathscr{F}\xrightarrow{\sim}R^1{\pi_E}_*\mathscr{B}$ from \cref{cohomology of shtukas over a field}. From the isomorphism $\Phi^*_E\mathscr{B}\xrightarrow{\sim}\mathscr{B}$, we get isomorphisms $R^1{\pi_E}_*\tensor[^{\tau^i}]{\mathscr{F}}{}\xrightarrow{\sim} R^1{\pi_E}_*\mathscr{B}$ for all $i>0$.
\begin{Lemma}\label{identification of coboundary maps}
The morphisms ${\pi_E}_*(\tensor[^{\tau^i}]{\mathscr{F}}{}(\alpha)/\tensor[^{\tau^i}]{\mathscr{F}}{})\to R^1{\pi_E}_*(\mathscr{B})$ don't depend on $i$ for $i>0$ and they have rank 1. Their kernels are the same as the kernel of the morphism ${\pi_E}_*(\tensor[^{\tau}]{\mathscr{F}}{}(\alpha)/\tensor[^{\tau}]{\mathscr{F}}{})\to{\pi_E}_*(\mathscr{B}(\alpha)/\mathscr{B})$.
\end{Lemma}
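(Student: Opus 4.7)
The plan is to reduce the claim to \cref{injectivity of the boundary map for generic point of X} via a functorial comparison with $\mathscr{B}$. For each $i\ge 1$, the natural surjection $p_i\colon\tensor[^{\tau^i}]{\mathscr{F}}{}\twoheadrightarrow\tensor[^{\tau^i}]{\mathscr{F}}{}/(\tensor[^{\tau^i}]{\mathscr{F}}{})^{\I}\cong\mathscr{B}$, where the final isomorphism is obtained by iterating $\Phi_E^*\mathscr{B}\simto\mathscr{B}$, fits into a morphism of short exact sequences
\[
\begin{tikzcd}[column sep=small]
0\arrow[r]&\tensor[^{\tau^i}]{\mathscr{F}}{}\arrow[r]\arrow[d,twoheadrightarrow,"p_i"']&\tensor[^{\tau^i}]{\mathscr{F}}{}(\alpha)\arrow[r]\arrow[d,twoheadrightarrow]&\tensor[^{\tau^i}]{\mathscr{F}}{}(\alpha)/\tensor[^{\tau^i}]{\mathscr{F}}{}\arrow[r]\arrow[d,twoheadrightarrow]&0\\
0\arrow[r]&\mathscr{B}\arrow[r]&\mathscr{B}(\alpha)\arrow[r]&\mathscr{B}(\alpha)/\mathscr{B}\arrow[r]&0,
\end{tikzcd}
\]
and applying $R\pi_{E*}$ yields the commutative square
\[
\begin{tikzcd}
{\pi_E}_*(\tensor[^{\tau^i}]{\mathscr{F}}{}(\alpha)/\tensor[^{\tau^i}]{\mathscr{F}}{})\arrow[r,"\delta_i"]\arrow[d,twoheadrightarrow,"q_i"']&R^1{\pi_E}_*\tensor[^{\tau^i}]{\mathscr{F}}{}\arrow[d,"\sim"]\\
{\pi_E}_*(\mathscr{B}(\alpha)/\mathscr{B})\arrow[r,"\delta"]&R^1{\pi_E}_*\mathscr{B},
\end{tikzcd}
\]
in which the right vertical arrow is precisely the isomorphism described in the paragraph preceding the lemma: both arise from applying $\Phi_E^{*i}$ to the quotient $\mathscr{F}\twoheadrightarrow\mathscr{B}$ and composing with the chosen iterated $\Phi_E^{*i}\mathscr{B}\cong\mathscr{B}$, i.e. with $R^1\pi_{E*}(p_i)$.

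Next I would invoke \cref{injectivity of the boundary map for generic point of X} with $k=1$ to conclude that $\delta$ is injective (the case $h=0$ being degenerate since then $R^1\pi_{E*}\mathscr{B}=0$ and all claims are trivial). Because ${\pi_E}_*(\mathscr{B}(\alpha)/\mathscr{B})$ is the fiber of an invertible sheaf at a closed point and therefore $1$-dimensional, and $q_i$ is surjective (as $p_i$ is), the composition $\delta\bcirc q_i$ has rank exactly $1$ with kernel $\ker q_i$. Transporting $\delta_i$ across the right-hand isomorphism, the commutative square forces the map in question to coincide with $\delta\bcirc q_i$; this produces the rank assertion and, at $i=1$, identifies the kernel with the kernel of ${\pi_E}_*(\tensor[^\tau]{\mathscr{F}}{}(\alpha)/\tensor[^\tau]{\mathscr{F}}{})\to{\pi_E}_*(\mathscr{B}(\alpha)/\mathscr{B})$ as claimed.

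The one genuine point of care, and the main obstacle I anticipate, is verifying independence of $i$ under the identification (\ref{isomorphism of fiber at zero}). Condition (\ref{Frobenius shifts of zero and pole mutually disjoint}) forces the modifications relating $\tensor[^{\tau^i}]{\mathscr{F}}{}$ to $\tensor[^{\tau^{i+1}}]{\mathscr{F}}{}$ to occur at $\Fr_X^i\bcirc\alpha$ and $\Fr_X^i\bcirc\beta$, both disjoint from $\alpha$ whenever $i\ge 1$. Hence the germs of $\tensor[^{\tau^i}]{\mathscr{F}}{}$ at $\alpha$ are independent of $i\ge 1$; this is precisely how (\ref{isomorphism of fiber at zero}) is defined. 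A bookkeeping of stalks at $\alpha$ then shows that the quotient maps $p_i$ (hence the $q_i$) are matched by this local identification, since $p_{i+1}$ is obtained from $p_i$ by applying $\Phi_E^*$ and composing with $\Phi_E^*\mathscr{B}\cong\mathscr{B}$, and both operations preserve restriction to a small neighborhood of $\alpha$ lying outside the supports of all successive modifications. Once this local compatibility is nailed down, the commutative square applies uniformly in $i$ and the lemma follows.
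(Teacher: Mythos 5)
Your factoring argument for the rank-1 claim is essentially the paper's: the paper asserts that the boundary map factors as ${\pi_E}_*(\tensor[^{\tau^i}]{\mathscr{F}}{}(\alpha)/\tensor[^{\tau^i}]{\mathscr{F}}{})\to{\pi_E}_*(\mathscr{B}(\alpha)/\mathscr{B})\to R^1{\pi_E}_*\mathscr{B}$ and then combines surjectivity of the first arrow, $1$-dimensionality of the middle term, and \cref{injectivity of the boundary map for generic point of X}; your commutative square is the same computation made explicit, and the right-hand vertical arrow is indeed $R^1\pi_{E*}(p_i)$, which matches the identification the paper set up before the lemma. The kernel statement at $i=1$ also follows as you say.

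Where the two proofs diverge is the independence of $i$. The paper introduces the intermediate sheaf $\mathscr{G}=\mathscr{F}+\Phi_E^*\mathscr{F}=F_2\mathscr{F}$ and uses the honest inclusions of $\mathscr{O}_{X\otimes E}$-modules $\tensor[^{\tau^i}]{\mathscr{F}}{}\hookrightarrow\tensor[^{\tau^i}]{\mathscr{G}}{}\hookleftarrow\tensor[^{\tau^{i+1}}]{\mathscr{F}}{}$; these induce a three-row commutative diagram (with the left vertical arrows being exactly the isomorphisms in (\ref{isomorphism of fiber at zero})) from which equality of kernel and image for rows $i$ and $i+1$ drops out. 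Your route instead tries to match $q_i$ with $q_{i+1}$ directly by a germ computation at $\alpha$. The conclusion is correct, but the sentence \emph{``both operations preserve restriction to a small neighborhood of $\alpha$''} is not: $\Phi_E^*$ does not preserve neighborhoods of $\Gamma_\alpha$, it carries them to neighborhoods of $\Gamma_{\Fr_X\bcirc\alpha}$ (indeed $\Phi^{-1}\Gamma_\alpha=\Gamma_{\Fr_X\bcirc\alpha}$). So the mere fact that $p_{i+1}=[\Phi_E^*\mathscr{B}\simto\mathscr{B}]\bcirc\Phi_E^*p_i$ does not by itself give agreement of $p_i$ and $p_{i+1}$ near $\alpha$. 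What actually produces the agreement is the compatibility of the quotient maps with the shtuka structures: the square
\[
\begin{tikzcd}
\Phi_E^*(\tensor[^{\tau^i}]{\mathscr{F}}{})\arrow[r]\arrow[d,"\Phi_E^*p_i"']&\tensor[^{\tau^i}]{\mathscr{F}}{}(\Fr_X^i\bcirc\beta)\arrow[d,"p_i"]\\
\Phi_E^*\mathscr{B}\arrow[r,"\sim"]&\mathscr{B}
\end{tikzcd}
\]
commutes because $\mathscr{F}/\mathscr{F}^{\I}\cong\mathscr{B}$ is an isomorphism of \emph{trivial shtukas}, and restricting this square to a neighborhood of $\alpha$ (away from $\Fr_X^i\bcirc\alpha$ and $\Fr_X^i\bcirc\beta$) turns the top arrow into the local identification $\tensor[^{\tau^{i+1}}]{\mathscr{F}}{}\simto\tensor[^{\tau^i}]{\mathscr{F}}{}$. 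You should make this step explicit; as written it is the one genuine gap. The paper's $\mathscr{G}$-trick is precisely a clean way to package the same compatibility without ever invoking $\Phi_E^*$ as a ``local'' operation, since the relevant compatibility of the surjections onto $\mathscr{B}$ along $\mathscr{F}\hookrightarrow\mathscr{G}\hookleftarrow\Phi_E^*\mathscr{F}$ is guaranteed by \cref{Remark: exact sequence for reducible shtuka}.
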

\begin{proof}
The morphism ${\pi_E}_*(\tensor[^{\tau^i}]{\mathscr{F}}{}(\alpha)/\tensor[^{\tau^i}]{\mathscr{F}}{})\to R^1{\pi_E}_*(\mathscr{B})$ factors as ${\pi_E}_*(\tensor[^{\tau^i}]{\mathscr{F}}{}(\alpha)/\tensor[^{\tau^i}]{\mathscr{F}}{})\to {\pi_E}_*(\mathscr{B}(\alpha)/\mathscr{B})\to
R^1{\pi_E}_*\mathscr{B}$.
The morphism ${\pi_E}_*(\tensor[^{\tau^i}]{\mathscr{F}}{}(\alpha)/\tensor[^{\tau^i}]{\mathscr{F}}{})\to {\pi_E}_*(\mathscr{B}(\alpha)/\mathscr{B})$ is surjective since $\tensor[^{\tau^i}]{\mathscr{F}}{}\to \mathscr{B}$ is surjective. The $E$-vector space ${\pi_E}_*(\mathscr{B}(\alpha)/\mathscr{B})$ has dimension 1 since $\rank\mathscr{B}=1$. The morphism ${\pi_E}_*(\mathscr{B}(\alpha)/\mathscr{B})\to
R^1{\pi_E}_*\mathscr{B}$ is injective by \cref{injectivity of the boundary map for generic point of X}. Therefore the morphisms in question have rank 1.

Let $\mathscr{G}=\mathscr{F}+\Phi_E^*\mathscr{F}$. Since $\mathscr{G}$ is obtained from $\mathscr{F}$ by partial Frobenius $F_2$, \cref{cohomology of shtukas over a field} gives an isomorphism $R^1{\pi_E}_*(\mathscr{G})\xrightarrow{\sim} R^1{\pi_E}_*(\mathscr{B})$. Hence we get isomorphisms $R^1{\pi_E}_*(\tensor[^{\tau^i}]{\mathscr{G}}{})\xrightarrow{\sim} R^1{\pi_E}_*(\mathscr{B})$ for all $i\ge 0$.

Let $i\ge 1$. The diagram $ \tensor[^{\tau^i}]{\mathscr{F}}{}\hookrightarrow\tensor[^{\tau^i}]{\mathscr{G}}{}\hookleftarrow
\tensor[^{\tau^{i+1}}]{\mathscr{F}}{}$ induces the following commutative diagram
\[\begin{tikzcd}
  {\pi_E}_*(\tensor[^{\tau^i}]{\mathscr{F}}{}(\alpha)/\tensor[^{\tau^i}]{\mathscr{F}}{})\arrow[r]\arrow[d,"\cong"]
  & R^1{\pi_E}_*(\tensor[^{\tau^i}]{\mathscr{F}}{}) \arrow[r,"\cong"]\arrow[d] & R^1{\pi_E}_*(\mathscr{B})\arrow[d,equal] \\
  {\pi_E}_*(\tensor[^{\tau^i}]{\mathscr{G}}{}(\alpha)/\tensor[^{\tau^i}]{\mathscr{G}}{})\arrow[r]
  & R^1{\pi_E}_*(\tensor[^{\tau^i}]{\mathscr{G}}{})\arrow[r,"\cong"] & R^1{\pi_E}_*(\mathscr{B})\\
  {\pi_E}_*(\tensor[^{\tau^{i+1}}]{\mathscr{F}}{}(\alpha)/\tensor[^{\tau^{i+1}}]{\mathscr{F}}{})\arrow[r]\arrow[u,"\cong"']
  & R^1{\pi_E}_*(\tensor[^{\tau^{i+1}}]{\mathscr{F}}{}) \arrow[r,"\cong"]\arrow[u] & R^1{\pi_E}_*(\mathscr{B})\arrow[u,equal]
\end{tikzcd}\]
The isomorphisms in the left column are from the isomorphisms
\[\tensor[^{\tau^i}]{\mathscr{F}}{}(\alpha)/\tensor[^{\tau^i}]{\mathscr{F}}{}\cong
\tensor[^{\tau^i}]{\mathscr{G}}{}(\alpha)/\tensor[^{\tau^i}]{\mathscr{G}}{}
\cong\tensor[^{\tau^{i+1}}]{\mathscr{F}}{}(\alpha)/\tensor[^{\tau^{i+1}}]{\mathscr{F}}{}\]
Therefore, the first row and the last row have the same kernel and image. This proves the first statement.

The second statment follows from injectivity of the morphism ${\pi_E}_*(\mathscr{B}(\alpha)/\mathscr{B})\to
R^1{\pi_E}_*\mathscr{B}$ and the identifications of the two morphisms ${\pi_E}_*(\tensor[^{\tau^i}]{\mathscr{F}}{}(\alpha)/\tensor[^{\tau^i}]{\mathscr{F}}{})\to R^1{\pi_E}_*(\mathscr{B})$ and ${\pi_E}_*(\tensor[^{\tau}]{\mathscr{F}}{}(\alpha)/\tensor[^{\tau}]{\mathscr{F}}{})\to R^1{\pi_E}_*(\mathscr{B})$
\end{proof}

\subsection{Deformation in the direction of major importance}\label{(Section)deformation in the direction of major importance}
\begin{Lemma}\label{cokernel of varying lines}
Let $V,W_0$ be two vector spaces over $E$ and assume $\dim_E W_0=1$. Let $\delta:V\to W_0$ be a nonzero linear map. Put $P=\ProjSym V^*$. Let $S$ be a scheme over $E$ with a morphism $f:S\to P$, and $\mathscr{L}=f^*\mathscr{O}_P(-1)$. Put $\mathscr{V}=\mathsf{p}_S^*V,\mathscr{W}_0=\mathsf{p}_S^*W_0$, Let $\Delta:\mathscr{L}\to \mathscr{V}\xrightarrow{\mathsf{p}_S^*\delta}\mathscr{W}_0$ be the composition induced by the canonical morphism and $\delta$. Then $\im\Delta=\mathscr{W}_0\otimes \mathscr{I}_{f^{-1}(H_\delta)}$, where $H_\delta$ is the hyperplane of $P$ corresponding to $\ker \delta$ and $\mathscr{I}_{f^{-1}(H_\delta)}\subset\mathscr{O}_S$ is the ideal sheaf of $f^{-1}(H_\delta)$.
\end{Lemma}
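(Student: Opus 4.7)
The plan is to reduce to the universal case $S=P$, $f=\Id_P$ and then obtain the general statement by pulling back. The entire argument breaks into two short steps, and there is really no serious obstacle beyond fixing conventions.

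First I would analyze the universal situation on $P=\ProjSym V^*$. Here the composition
\[\mathscr{O}_P(-1)\hookrightarrow V\otimes\mathscr{O}_P\xrightarrow{\delta\otimes\Id}W_0\otimes\mathscr{O}_P\]
is a morphism of invertible sheaves, i.e.\ a global section of $\SHom(\mathscr{O}_P(-1),W_0\otimes\mathscr{O}_P)=W_0\otimes\mathscr{O}_P(1)$. Under the canonical identification $H^0(P,\mathscr{O}_P(1))=V^*$, this section is nothing but $\delta\in V^*\otimes W_0$. In a local trivialization of $\mathscr{O}_P(-1)$ by a section $e$, the composition sends $e\mapsto\delta(e)$, which, after fixing a basis of $W_0$, is a nonzero linear form cutting out the hyperplane $H_\delta=\mathbb{P}(\ker\delta)$ as a reduced Cartier divisor. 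Therefore its image is precisely $W_0\otimes\mathscr{I}_{H_\delta}\subset W_0\otimes\mathscr{O}_P$, and I obtain a surjection
\[\mathscr{O}_P(-1)\twoheadrightarrow W_0\otimes\mathscr{I}_{H_\delta}.\]

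Next I would pull back along $f:S\to P$. Because $f^*$ is right exact it preserves surjections, so the displayed surjection yields a surjection $\mathscr{L}\twoheadrightarrow W_0\otimes f^*\mathscr{I}_{H_\delta}$. Composing with the canonical map $W_0\otimes f^*\mathscr{I}_{H_\delta}\to W_0\otimes\mathscr{O}_S=\mathscr{W}_0$ recovers $\Delta$, so $\im\Delta$ equals the image of this last arrow. By the very definition of the scheme-theoretic preimage, the image of $f^*\mathscr{I}_{H_\delta}\to\mathscr{O}_S$ is the ideal sheaf $\mathscr{I}_{f^{-1}(H_\delta)}$. Hence $\im\Delta=\mathscr{W}_0\otimes\mathscr{I}_{f^{-1}(H_\delta)}$, as claimed.

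The only point requiring care is the convention that $\ProjSym V^*$ parametrizes lines in $V$, so that $\mathscr{O}_P(-1)\hookrightarrow V\otimes\mathscr{O}_P$ is the tautological embedding and linear functionals on $V$ are global sections of $\mathscr{O}_P(1)$; once this is pinned down the whole computation is purely formal.
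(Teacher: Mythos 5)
Your proof is correct and follows essentially the same approach as the paper: reduce to the universal case $S=P$, $f=\Id$, where $\Delta$ identifies $\mathscr{O}_P(-1)$ with $\mathscr{W}_0 \otimes \mathscr{I}_{H_\delta}$, and then pull back. The paper asserts the reduction step without comment, while you spell out why pullback works (right-exactness of $f^*$ together with the fact that $\mathscr{I}_{f^{-1}(H_\delta)}$ is by definition the image of $f^*\mathscr{I}_{H_\delta}\to\mathscr{O}_S$); this is the right justification.
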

\begin{proof}
It suffices to prove the lemma for $S=P, f=\Id$. In this case, since $\mathscr{L}\to\mathscr{V}$ is the canonical morphism, $\Delta$ identifies $\mathscr{L}$ as the subsheaf $\mathscr{W}_0\otimes \mathscr{I}_{H_\delta}$ of $\mathscr{W}_0$.
\end{proof}

Denote $H^\sharp$ to be the hyperplane of $P^\sharp$ corresponding to the kernel of the morphism ${\pi_E}_*(\tensor[^{\tau}]{\mathscr{F}}{}(\alpha)/\tensor[^{\tau}]{\mathscr{F}}{})\to{\pi_E}_*(\mathscr{B}(\alpha)/\mathscr{B})$. Recall that $f^\sharp:\Spec A\to P^\sharp$ is the morphism corresponding to the upper modification of the pair $(\mathscr{E}_n,\widetilde{\mathscr{F}})$ at $\alpha\bcirc\mathsf{p}_A$.

The following proposition deals with deformation in the direction of major importance. Recall that we assume $n\ge h$.

\begin{Proposition}\label{deformation in the major direction}
We have an isomorphism $R^1{\pi_A}_*\mathscr{E}_n^+\cong\bigoplus_{i=0}^{h-1}\mathscr{O}_A/\mathscr{J}^{q^i}$, where $\mathscr{J}$ is the ideal sheaf of ${f^\sharp}^{-1}(H^\sharp)$.
\end{Proposition}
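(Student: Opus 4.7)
The plan is to compute $R^1\pi_{A*}\mathscr{E}_n^+$ as the cokernel of a connecting homomorphism. The short exact sequence
\[0\to\mathscr{E}_n\to\mathscr{E}_n^+\to\bigoplus_{i=0}^{n-1}\mathscr{E}_n^{\sharp i}/\mathscr{E}_n\to 0\]
(the quotient splits as displayed because the graphs $\Gamma_{\Fr_X^i\bcirc\alpha\bcirc\mathsf{p}_A}$ are pairwise disjoint by condition (\ref{Frobenius shifts of zero and pole mutually disjoint})) has each summand with finite support over $\Spec A$, so its $\pi_{A*}$ long exact sequence reduces to
\[\bigoplus_{i=0}^{n-1}\pi_{A*}(\mathscr{E}_n^{\sharp i}/\mathscr{E}_n)\xrightarrow{\delta}R^1\pi_{A*}\mathscr{E}_n\to R^1\pi_{A*}\mathscr{E}_n^+\to 0.\]

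I would next identify both sides of $\delta$. By flat base change and \cref{cohomology of shtukas over a field} applied to $\tensor*[^{\tau^n}]{\mathscr{F}}{}$ — whose maximal trivial subsheaf vanishes and whose quotient by $(\cdot)^{\I}$ identifies with $\mathscr{B}$ via $n$ applications of $\Phi_E^*\mathscr{B}\simto\mathscr{B}$ — we get $R^1\pi_{A*}\mathscr{E}_n\cong A\otimes_E W$, a free $A$-module of rank $h$, where $W=R^1\pi_{E*}\mathscr{B}$. By \cref{injectivity of the boundary map for generic point of X}, the coboundary maps $\pi_{E*}(\mathscr{B}(\Fr_X^i\bcirc\alpha)/\mathscr{B})\to W$ for $i=0,\dots,h-1$ have linearly independent images; picking a nonzero generator $e_i$ of each gives a basis of $W$. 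On the source side, the identities $\mathscr{E}_n^{\sharp i}=(\Phi_A^*)^i\mathscr{E}_{n-i}^{\sharp}$ and $\mathscr{E}_n=(\Phi_A^*)^i\mathscr{E}_{n-i}$ yield $\pi_{A*}(\mathscr{E}_n^{\sharp i}/\mathscr{E}_n)=(\Fr_A^*)^i\mathscr{L}^\sharp$, where $\mathscr{L}^\sharp=(f^\sharp)^*\mathscr{O}_{P^\sharp}(-1)$.

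The core step is computing the image of each $\delta_i$ inside $A\otimes_E W$. It factors as $(\Fr_A^*)^i\mathscr{L}^\sharp\hookrightarrow A\otimes_E\Fr_E^{*i}V\to A\otimes_E W$, where the second arrow is the $A$-linear extension of an $E$-map. Applying $\Phi_E^{*i}$ to \cref{identification of coboundary maps} for $\tensor*[^{\tau^{n-i}}]{\mathscr{F}}{}$ and using $\Phi_E^{*i}(\mathscr{B}(\alpha)/\mathscr{B})\cong\mathscr{B}(\Fr_X^i\bcirc\alpha)/\mathscr{B}$, this $E$-map factors through $\pi_{E*}(\mathscr{B}(\Fr_X^i\bcirc\alpha)/\mathscr{B})$, whose coboundary image in $W$ is the line $E\cdot e_i$. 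At the base level, \cref{cokernel of varying lines} identifies the image of $\mathscr{L}^\sharp\to A\otimes V\to A\otimes W_0$ with $\mathscr{J}\cdot e_0$; since $\mathscr{J}$ is principal (being pulled back from a hyperplane in the projective space $P^\sharp$), we have $(\Fr_A^*)^i\mathscr{J}=\mathscr{J}^{q^i}$ as an ideal of $A$. Combining, the image of $\delta_i$ equals $\mathscr{J}^{q^i}\cdot e_i$.

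The total image of $\delta$ is therefore $\sum_{i=0}^{n-1}\mathscr{J}^{q^i}\cdot e_i$. For $i\geq h$, expand $e_i=\sum_{j<h}c_{ij}e_j$ along the basis; since $\mathscr{J}^{q^i}\subset\mathscr{J}^{q^j}$ whenever $i\geq j$, the term $\mathscr{J}^{q^i}\cdot e_i$ already lies in $\bigoplus_{j<h}\mathscr{J}^{q^j}\cdot e_j$. Hence $\im\delta=\bigoplus_{i=0}^{h-1}\mathscr{J}^{q^i}\cdot e_i$ as a direct sum along the basis, giving $R^1\pi_{A*}\mathscr{E}_n^+\cong\bigoplus_{i=0}^{h-1}\mathscr{O}_A/\mathscr{J}^{q^i}$. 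The main difficulty I anticipate is the image computation for $\delta_i$: one must verify that the Frobenius identifications deposit each $\delta_i$ into the specific line $E\cdot e_i$ (and not all into $E\cdot e_0$), which relies on the Frobenius-equivariance packaged in \cref{identification of coboundary maps} together with careful bookkeeping using the iso $\Phi_E^*\mathscr{B}\simto\mathscr{B}$.
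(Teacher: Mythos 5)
Your proposal is correct and follows essentially the same approach as the paper: you use the Frobenius-twist identity $\mathscr{E}_n^{\sharp i}\cong(\Phi_A^*)^i\mathscr{E}_{n-i}^\sharp$, \cref{cokernel of varying lines} for the base-level coboundary image, \cref{identification of coboundary maps} to pin the hyperplane as $H^\sharp$ for every index, and \cref{injectivity of the boundary map for generic point of X} for the direct-sum decomposition $W=\bigoplus_{i<h}W_i$, then absorb the terms with $i\ge h$. The only cosmetic difference is that you package the computation as the cokernel of the connecting map in a single long exact sequence for $\mathscr{E}_n\hookrightarrow\mathscr{E}_n^+$, whereas the paper first computes each $R^1{\pi_A}_*\mathscr{E}_n^{\sharp i}=\mathscr{W}/\mathscr{W}_i\otimes\mathscr{J}^{q^i}$ and then invokes right exactness of $R^1{\pi_A}_*$ to pass to the pushout $\mathscr{E}_n^+=\sum_i\mathscr{E}_n^{\sharp i}$; these are the same calculation.
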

\begin{proof}
Let $W=R^1{\pi_E}_*\mathscr{B}$, $W_0=\im( {\pi_E}_*(\mathscr{B}(\alpha)/\mathscr{B})\to R^1{\pi_E}_*\mathscr{B})$. We have an isomorphism $\Fr_E^*W\cong W$ induced by $\Phi_E^*\mathscr{B}\cong \mathscr{B}$. For $i=1,2,\dots,n-1$, put $W_i=(\Fr_E^*)^i W_0\subset W$. Let $\mathscr{W}=\mathsf{p}_A^*W, \mathscr{W}_i=\mathsf{p}_A^*W_i$.

For $j=1,2,\dots,n$, let $V^j={\pi_E}_*(\tensor[^{\tau^j}]{\mathscr{F}}{}(\alpha)/\tensor[^{\tau^j}]{\mathscr{F}}{})$. Let $\delta^j:V^j\to W$ be the morphism ${\pi_E}_*(\tensor[^{\tau^j}]{\mathscr{F}}{}(\alpha)/\tensor[^{\tau^j}]{\mathscr{F}}{})\to R^1{\pi_E}_*\mathscr{B}$. By \cref{identification of coboundary maps}, we have $\rank\delta^j=1$ and  $\im\delta^j=W_0$ for $j=1,2,\dots,n$. By flat base change $\mathscr{W}=R^1{\pi_A}_*\mathscr{E}_j$. Since $\mathscr{E}_j^\sharp$ is the upper modification of $\mathscr{E}_j$ at $\alpha\bcirc\mathsf{p}_A$ corresponding to $f^\sharp$, we have ${\pi_A}_*(\mathscr{E}_j^\sharp/\mathscr{E}_j)={f^\sharp}^*\mathscr{O}_{P^\sharp}(-1)$, ${\pi_A}_*(\mathscr{E}_j(\alpha\bcirc\mathsf{p}_A)/\mathscr{E}_j)=\mathsf{p}_A^*V^j$ and the morphism ${\pi_A}_*(\mathscr{E}_j^\sharp/\mathscr{E}_j)\to {\pi_A}_*(\mathscr{E}_j(\alpha\bcirc\mathsf{p}_A)/\mathscr{E}_j)$ is the canonical morphism. Thus we can apply \cref{cokernel of varying lines} to deduce that the image of the coboundary map ${\pi_A}_*(\mathscr{E}_j^\sharp/\mathscr{E}_j)\to R^1{\pi_A}_*\mathscr{E}_j$ is $\mathscr{W}_0\otimes \mathscr{I}_{{f^\sharp}^{-1}(H_{\delta^j})}$, where $H_{\delta^j}$ is the hyperplane of $P^\sharp$ corresponding to the $\ker\delta^j$, and $\mathscr{I}_{{f^\sharp}^{-1}(H_{\delta^j})}\subset\mathscr{O}_A$ is the ideal sheaf of ${f^\sharp}^{-1}(H_{\delta^j})$. By \cref{identification of coboundary maps}, we see that $H_{\delta^j}=H^\sharp$ for $j=1,2,\dots,n$. So $R^1{\pi_A}_*\mathscr{E}_j^\sharp\cong \mathscr{W}/\mathscr{W}_0\otimes \mathscr{J}$.

Recall that $\mathscr{E}_n^{\sharp i}\cong (\Phi_A^*)^i\mathscr{E}_{n-i}^\sharp$ by \cref{infinitesimal neighborhood of moduli space}. Since $\dim X=1$, by base change for cohomology, we have
\begin{align*}
&R^1{\pi_A}_*\mathscr{E}_n^{\sharp i}= R^1{\pi_A}_*(\Phi_A^*)^i\mathscr{E}_{n-i}^\sharp=(\Fr_A^*)^i R^1{\pi_A}_*\mathscr{E}_{n-i}^\sharp \\=&(\Fr_A^*)^i(\mathscr{W}/\mathscr{W}_0\otimes \mathscr{J})=\mathscr{W}/\mathscr{W}_i\otimes \mathscr{J}^{q^i}
\end{align*}
Since $R^1{\pi_A}_*$ is right exact, it commutes with push out. So $\mathscr{E}_n^+=\sum_{i=1}^{n-1}\mathscr{E}_n^{\sharp i}$ implies that
\[R^1{\pi_A}_*\mathscr{E}_n^+ =\mathscr{W}/\sum_{i=0}^{n-1}\mathscr{W}_i\otimes \mathscr{J}^{q^i}\]
Note that $W_i=(\Fr_E^*)^i W_0=\im({\pi_E}_*(\mathscr{B}(\Fr_X^i\bcirc\alpha)/\mathscr{B})\to R^1{\pi_E}_*\mathscr{B})$. \cref{injectivity of the boundary map for generic point of X} implies that $W=\bigoplus_{i=0}^{h-1}W_i$. So $\mathscr{W}=\bigoplus_{i=0}^{h-1}\mathscr{W}_i$. For $j>h$, we have
\[\mathscr{W}_j\otimes \mathscr{J}^{q^j}\subset \mathscr{W}\otimes\mathscr{J}^{q^j}\subset\sum_{i=0}^{h-1}\mathscr{W}_i\otimes \mathscr{J}^{q^i}\]
Hence the assumption $n\ge h$ implies that
\[\sum_{i=0}^{n-1}\mathscr{W}_i\otimes \mathscr{J}^{q^i}=\sum_{i=0}^{h-1}\mathscr{W}_i\otimes \mathscr{J}^{q^i}\]
Therefore,
\[R^1{\pi_A}_*\mathscr{E}_n^+=\bigoplus_{i=0}^{h-1}\mathscr{W}_i/\mathscr{W}_i\otimes \mathscr{J}^{q^i}
\cong \bigoplus_{i=0}^{h-1}\mathscr{O}_A/\mathscr{J}^{q^i}\]
\end{proof}

\subsection{Deformation in the direction of minor importance}\label{(Section)deformation in the direction of minor importance}
The following lemma deals with deformation in the direction of minor importance. Recall that we assume $n\ge h$.
\begin{Lemma}\label{deformation in the minor direction}
The inclusion $\mathscr{E}_n^\pm\to\mathscr{E}_n^+$ induces an isomorphism $R^1{\pi_A}_*\mathscr{E}_n^\pm\to R^1{\pi_A}_*\mathscr{E}_n^+$.
\end{Lemma}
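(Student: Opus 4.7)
The plan is to reduce the lemma to the vanishing of a coboundary map, which will be forced by the hypothesis $\mathscr{F}^{\II}=0$ via a pole-side analog of the argument in \cref{deformation in the major direction}. First, observe that $\mathscr{E}_n^\pm$ admits the description $\bigcap_{i=0}^{n-1}(\mathscr{E}_n^+)^{\flat i}$, where $(\mathscr{E}_n^+)^{\flat i}$ is the lower modification of $\mathscr{E}_n^+$ at $\Fr_X^i\bcirc\beta\bcirc\mathsf{p}_A$ determined by $\widetilde{\mathscr{F}}\subset\mathscr{E}_n^+$. (These modifications are well-defined because $\mathscr{E}_n^+$ agrees with $\mathscr{E}_n$ on a neighborhood of each pole graph, all of which are disjoint from the zero graphs by condition (\ref{Frobenius shifts of zero and pole mutually disjoint}).) The short exact sequence
\[0\to\mathscr{E}_n^\pm\to\mathscr{E}_n^+\to\mathscr{Q}\to 0,\qquad \mathscr{Q}:=\mathscr{E}_n^+/\mathscr{E}_n^\pm,\]
has $\mathscr{Q}$ supported on $\bigsqcup_{i=0}^{n-1}\Gamma_{\Fr_X^i\bcirc\beta\bcirc\mathsf{p}_A}$, so $R^1\pi_{A*}\mathscr{Q}=0$; the resulting surjection $R^1\pi_{A*}\mathscr{E}_n^\pm\twoheadrightarrow R^1\pi_{A*}\mathscr{E}_n^+$ is an isomorphism precisely when the connecting map $\partial:\pi_{A*}\mathscr{Q}\to R^1\pi_{A*}\mathscr{E}_n^\pm$ vanishes.

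To establish $\partial=0$, I would mirror the proof of \cref{deformation in the major direction} on the pole side. The component of $\partial$ corresponding to the $i$-th pole comes from the short exact sequence $0\to(\mathscr{E}_n^+)^{\flat i}\to\mathscr{E}_n^+\to\mathscr{E}_n^+/(\mathscr{E}_n^+)^{\flat i}\to 0$ and the inclusion $(\mathscr{E}_n^+)^{\flat i}\supset\mathscr{E}_n^\pm$. Using the identifications (\ref{isomorphism of fiber at pole}) and a Serre-dual of \cref{identification of coboundary maps}, this coboundary should factor through a coboundary map whose source identifies with a pushforward built from $\mathscr{F}^{\II}$—playing the role dual to that of $\mathscr{B}=\mathscr{F}/\mathscr{F}^{\I}$ in the zero case, where the coboundary was shown to factor through $\pi_{E*}(\mathscr{B}(\alpha)/\mathscr{B})\to R^1\pi_{E*}\mathscr{B}$. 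Since $\mathscr{F}^{\II}=0$ by hypothesis, this intermediate factor is identically zero, whence $\partial=0$.

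The main obstacle is the rigorous formulation of the dual of \cref{identification of coboundary maps}. One must set up the Frobenius-compatibility diagrams of Section 5.3 with the roles of zero and pole (correspondingly of $\mathscr{B}$ and $\mathscr{A}:=\mathscr{F}^{\II}$) exchanged; these diagrams in turn rest on a pole-side analog of \cref{injectivity of the boundary map for generic point of X}, now for the sheaves $\mathscr{A}(-\sum_{i=0}^{k-1}\Fr_X^i\bcirc\beta)$ rather than the $\mathscr{B}_k$. Carrying out this dual bookkeeping is the only nontrivial work; once it is in place, the hypothesis $\mathscr{F}^{\II}=0$ immediately annihilates the factorization of each component of $\partial$, giving the desired isomorphism.
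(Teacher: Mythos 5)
Your opening moves are sound and coincide with the paper's: the identification $\mathscr{E}_n^\pm=\bigcap_{i=0}^{n-1}(\mathscr{E}_n^+)^{\flat i}$ and the reduction to killing the connecting map $\partial$ are exactly the paper's first step, since $\mathscr{E}_n^+/\mathscr{E}_n^\pm\cong\mathscr{E}_n/\mathscr{E}_n^-$ (the lower modifications happen away from the zeros, so $\mathscr{E}_n^+$ and $\mathscr{E}_n$ agree there), and the paper's map $i_n$ is precisely this identification. From here, though, your proposed mechanism for $\partial=0$ is not viable as stated. The zero-side argument in \cref{deformation in the major direction} works because $R^1\pi_*$ is right exact and therefore commutes with the pushout $\mathscr{E}_n^+=\sum\mathscr{E}_n^{\sharp i}$, and because the target $R^1\pi_{A*}\mathscr{E}_n$ is the flat pullback of $R^1\pi_{E*}(\tensor[^{\tau^n}]{\mathscr{F}}{})$, so the $\Spec E$-level coboundary computations of \cref{identification of coboundary maps} and \cref{injectivity of the boundary map for generic point of X} apply after base change. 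Neither feature survives on the pole side: lower modifications form an intersection, with which $R^1\pi_*$ does not commute (the natural thing to compute there is $\pi_*$, not $R^1\pi_*$), and the connecting map you want to annihilate targets $R^1\pi_{A*}\mathscr{E}_n^\pm$ — or, after the reduction, $R^1\pi_{A*}\mathscr{E}_n^-$ — neither of which is a flat pullback from $\Spec E$, so there is no identification of coboundaries available to "factor through $\mathscr{F}^{\II}$." Your heuristic that $\mathscr{F}^{\II}=0$ is the operative hypothesis is correct, but it enters quite differently.

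The paper's route is: prove $d_n^-=0$ directly for $n=h$ by a dimension count — $j_h^-$ is surjective with free target $R^1\pi_{A*}\mathscr{E}_h$, and its closed fiber is injective because $h^1(\mathscr{F}\cap\tensor[^{\tau^h}]{\mathscr{F}}{})=h^1(\tensor[^{\tau^h}]{\mathscr{F}}{})=h$ by \cref{h^0 and h^1 of a shtuka} and \cref{Remark: exact sequence for reducible shtuka} (this is where $\mathscr{F}^{\II}=0$ is used, to guarantee $\max\{h,\,0-\chi\}=h$ for the sheaf of Euler characteristic $-h$) — and then bootstrap from $n=h$ to general $n\ge h$ by passing to the versal deformation $A_n$, using the explicit module structure from \cref{deformation in the major direction} together with the algebraic \cref{isomorphism on smaller neighborhood implies isomorphism}. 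Your proposal does not address the $n>h$ bootstrap at all, and the ``dual bookkeeping'' you defer to as ``the only nontrivial work'' is in fact the crux and is replaced in the paper by a different argument. So while your first paragraph is correct, the remainder has a genuine gap.
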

\begin{proof}
Consider the following commutative diagram
\[\begin{tikzcd}
{\pi_A}_*(\mathscr{E}_n^+/\mathscr{E}_n^\pm) \arrow[r,"d_n^\pm"] & R^1{\pi_A}_*\mathscr{E}^\pm_n \arrow[r,"j_n^\pm"] & R^1{\pi_A}_*\mathscr{E}^+_n \arrow[r] & 0 \\
{\pi_A}_*(\mathscr{E}_n/\mathscr{E}_n^-) \arrow[r,"d_n^-"]\arrow[u,"i_n"] & R^1{\pi_A}_*\mathscr{E}^-_n \arrow[r, "j_n^-"]\arrow[u] & R^1{\pi_A}_*\mathscr{E}_n \arrow[r]\arrow[u] & 0
\end{tikzcd}\]
where the horizontal sequences are exact. Since $\alpha\bcirc\mathsf{p}_A,\Fr_X\bcirc\alpha\bcirc\mathsf{p}_A,\dots,\Fr_X^{n-1}\bcirc\alpha\bcirc\mathsf{p}_A$ and $\beta\bcirc\mathsf{p}_A,\Fr_X\bcirc\beta\bcirc\mathsf{p}_A,\dots,\Fr_X^{n-1}\bcirc\beta\bcirc\mathsf{p}_A$ have disjoint graphs, the map $i_n$ is an isomorphism. By commutativity of the left square, to prove that $d_n^\pm=0$, it suffices to prove that $d_n^-=0$.

We prove that it is indeed the case if $n=h$.

It suffices to prove that $j_h^-$ is an isomorphism. Since $\mathscr{E}_h/\mathscr{E}_h^-$ is torsion, $j_h^-$ is surjective. Recall that $\mathscr{E}_h=(\Id_X\times\mathsf{p}_A)^*(\tensor[^{\tau^h}]{\mathscr{F}}{})$. By flat base change, $R^1{\pi_A}_*\mathscr{E}_h\cong \mathsf{p}_A^*R^1{\pi_E}_*(\tensor[^{\tau^h}]{\mathscr{F}}{})$ is free. Hence it remains to show that $e_A^*R^1{\pi_A}_*\mathscr{E}_h^-$ and $e_A^*R^1{\pi_A}_*\mathscr{E}_h$ have the same dimension over $E$, where $e_A:\Spec E\to\Spec A$ is the closed point. By construction, $e_A^*\mathscr{E}_h=\tensor[^{\tau^h}]{\mathscr{F}}{}, e_A^*\mathscr{E}_h^-=\mathscr{F}\cap\tensor[^{\tau^h}]{\mathscr{F}}{}$. Since $\dim X=1$, by base change of cohomology, $\dim_E e_A^*R^1{\pi_A}_*\mathscr{E}_h=h^1(\tensor[^{\tau^h}]{\mathscr{F}}{}), \dim_E e_A^*R^1{\pi_A}_*\mathscr{E}_h^-=h^1(\mathscr{F}\cap\tensor[^{\tau^h}]{\mathscr{F}}{})$. Since $\chi(\tensor[^{\tau^h}]{\mathscr{F}}{})=0, \chi(\mathscr{F}\cap\tensor[^{\tau^h}]{\mathscr{F}}{})=-h,\mathscr{F}^{\II}=0,\mathscr{F}/\mathscr{F}^{\I}\cong\mathscr{B}$, we have $h^1(\tensor[^{\tau^h}]{\mathscr{F}}{})=h^1(\mathscr{F}\cap\tensor[^{\tau^h}]{\mathscr{F}}{})=h$ by \cref{h^0 and h^1 of a shtuka} and \cref{Remark: exact sequence for reducible shtuka}.

Now we prove that $j_n^\pm$ is an isomorphism for all $n\ge h$.

Recall that $P^\natural=P^\sharp\times P^\flat$. As in \cref{Section: the category of Artinian E-algebras for deformation}, let $p^\natural$ be the $E$-point of $P^\natural$ corresponding to the upper and lower modifications in the shtuka $\mathscr{F}$. For $j\ge 0$, let $\Spec A_j$ be the $(q^j-1)$-th infinitesimal neighborhood of $p^\natural$ in $P^\natural$. By base change of cohomology, the statement follows from the special case when $A=A_n$ and $\widetilde{\mathscr{F}}$ is the deformation of $\mathscr{F}$ corresponding to the natural closed immersion $f_n:\Spec A_n\to P^\natural$.

Recall that $H^\sharp$ denotes the hyperplane of $P^\sharp$ corresponding to the kernel of the morphism ${\pi_E}_*(\tensor[^{\tau}]{\mathscr{F}}{}(\alpha)/\tensor[^{\tau}]{\mathscr{F}}{})\to{\pi_E}_*(\mathscr{B}(\alpha)/\mathscr{B})$. Let $x\in A_n$ be a generator of the ideal of $A_n$ corresponding to $f_n^{-1}(H^\sharp\times P^\flat)$. Then \cref{deformation in the major direction} implies that
\[R^1{\pi_{A_n}}_*\mathscr{E}_n^+\cong \bigoplus_{i=0}^{h-1}A_n/x^{q^i}\]
By our construction, $\mathscr{E}_n^\pm$ and $\mathscr{E}_h^\pm$ are canonically isomorphic on $X\times\Spec A_h$. Thus the case $n=h$ above implies that $j_n^\pm$ induces an isomorphism on $\Spec A_h$. The $A_n$-module $R^1{\pi_{A_n}}_*\mathscr{E}_n^\pm$ is finitely generated since $\mathscr{E}_n^\pm$ is coherent. Applying \cref{isomorphism on smaller neighborhood implies isomorphism}, we deduce that $j_n^\pm$ is an isomorphism.
\end{proof}

\begin{Lemma}\label{isomorphism on smaller neighborhood implies isomorphism}
Let $R=E[[x_0,x_1,\dots,x_k]]$ and $\mathfrak{m}$ be its maximal ideal. Let $R_l=R/\mathfrak{m}^l$ for each $l\ge 0$. Suppose there is a surjective morphism of finitely generated $R_b$-modules $\xi:U\to \bigoplus_{i=1}^m R_b/x_0^{d_i}$ which induces an isomorphism on $\Spec R_a$ for some $a\le b$, and $0<d_i<a$ for all $i=1,2,\dots,m$. Then $\xi$ is an isomorphism.
\end{Lemma}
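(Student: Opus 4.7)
The plan is to realize $U$ as a quotient $R_b^m\twoheadrightarrow U$ whose kernel $N$ lies inside $N_0:=\bigoplus_i x_0^{d_i}R_b$, and then to show $N=N_0$ by extracting from the hypothesis a change-of-basis matrix that is invertible over $R_b$.

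First, I would choose lifts $u_1,\dots,u_m\in U$ of the canonical generators $e_1,\dots,e_m$ of $V=\bigoplus_i R_b/x_0^{d_i}$; the given iso $U/\mathfrak{m}^a U\simto V/\mathfrak{m}^a V$ together with Nakayama ensures that the $u_i$ generate $U$, giving a surjection $\phi:R_b^m\twoheadrightarrow U$, $e_i\mapsto u_i$, with kernel $N\subset N_0$. The hypothesis rewrites as $N+\mathfrak{m}^a R_b^m=N_0+\mathfrak{m}^a R_b^m$, so for each $i$ we may write $x_0^{d_i}e_i=n_i+r_i$ with $n_i\in N$, $r_i\in\mathfrak{m}^a R_b^m$, and expand $n_i=\sum_j x_0^{d_j}a_{ij}e_j$ with $a_{ij}\in R_b$.

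Comparing coordinates then yields $x_0^{d_i}(1-a_{ii})\in\mathfrak{m}^a$ and $x_0^{d_j}a_{ij}\in\mathfrak{m}^a$ for $j\ne i$. The key step, and the main obstacle, is to upgrade these to $a_{ii}\equiv 1$ and $a_{ij}\equiv 0\pmod{\mathfrak{m}}$. For this I would lift to $R=E[[x_0,\dots,x_k]]$ (harmless since $a\le b$) and exploit the fact that the associated graded of $R$ is the polynomial ring $E[x_0,\dots,x_k]$, a domain; by comparing lowest-degree homogeneous components, multiplication by $x_0^{d}$ is injective on homogeneous pieces, so $x_0^d g\in\mathfrak{m}^a$ forces $g\in\mathfrak{m}^{a-d}$. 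Applied with $d=d_i<a$, this gives $1-a_{ii}\in\mathfrak{m}^{a-d_i}\subset\mathfrak{m}$ and $a_{ij}\in\mathfrak{m}^{a-d_j}\subset\mathfrak{m}$, so the matrix $A=(a_{ij})\in\Mat_m(R_b)$ is congruent modulo $\mathfrak{m}$ to a diagonal matrix with nonzero diagonal and is therefore invertible over the local ring $R_b$.

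Finally, the $m\times m$ matrix whose $i$-th row is $n_i$ is precisely $AD$, where $D=\diag(x_0^{d_1},\dots,x_0^{d_m})$. Invertibility of $A$ implies that the $R_b$-span of $\{n_i\}$ coincides with the row span of $D$, namely $N_0$. Hence $N\supset N_0$, and combined with $N\subset N_0$ we conclude $N=N_0$, i.e., $\xi$ is an isomorphism. The entire argument uses the strict inequality $d_i<a$ exactly once, in the power-series divisibility step that makes $A$ a perturbation of the identity.
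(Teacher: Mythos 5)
Your proof is correct and follows essentially the same route as the paper. Both arguments reduce to showing that the coefficient matrix relating $\{n_i\}$ to $\{x_0^{d_i}e_i\}$ is congruent to the identity modulo $\mathfrak{m}$ via the cancellation property $x_0^d g\in\mathfrak{m}^a\Rightarrow g\in\mathfrak{m}^{a-d}$; the paper finishes by applying Nakayama to the inclusion $\ker\zeta\subset\ker(\xi\circ\zeta)$ whereas you invert the matrix $A$ over the local ring $R_b$ --- the same conclusion in slightly different packaging --- and you additionally make explicit the associated-graded justification for the divisibility step that the paper asserts without comment.
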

\begin{proof}
For $i=1,2,\dots, m$, let $\bar{e}_i$ be the element of $\bigoplus_{i=1}^m R_b/x_0^{d_i}$ which is 1 on the $i$-th component and 0 elsewhere. Pick $u_i\in U$ such that $\xi(u_i)=\bar{e}_i$. Since $U$ is finitely generated, by Nakayama's lemma, $u_1,u_2,\dots,u_m$ generate $U$. Consider morphisms of $R_b$-modules
\[\begin{tikzcd}
  R_b^{\oplus m}\arrow[r,"\zeta"]& U\arrow[r,"\xi"]&\bigoplus_{i=1}^m R_b/x_0^{d_i}
\end{tikzcd}\]
where $\zeta$ is given by $\zeta(e_i)=u_i$ on a basis $\{e_1,\dots,e_m\}$ of $R_b^{\oplus m}$. Since $\xi$ induces an isomorphism on $\Spec R_a$, we have $x_0^{d_i}e_i\in \ker\zeta+\mathfrak{m}^a R_b^{\oplus m}$ for each $i$. Hence for each $i$ there exists $z_i=\sum_{j=1}^m z_{ij} e_j\in \ker\zeta$ such that $z_i-x_0^{d_i}e_i\in \mathfrak{m}^a R_b^{\oplus m}$. Since $\ker\zeta\subset\ker\xi\bcirc\zeta=\bigoplus_{j=1}^m R_b\cdot x_0^{d_j}e_j$, we can write $z_{ij}=r_{ij}x_0^{d_j}$ for some $r_{ij}\in R_b$. So $z_i-x_0^{d_i}e_i=(r_{ii}-1)x_0^{d_i}e_i+\sum_{j\ne i}r_{ij}x_0^{d_j}e_j$. Since $a>d_j$ for each $j$, we see that $r_{ii}-1\in \mathfrak{m}$ for all $i$ and $r_{ij}\in\mathfrak{m}$ for all $j\ne i$. Thus $z_i-x_0^{d_i}e_i\in \mathfrak{m}\cdot\ker\xi\bcirc\zeta$ for all $i$. Therefore, we have $\ker\xi\bcirc\zeta=\ker\zeta+\mathfrak{m}\cdot\ker\xi\bcirc\zeta$. By Nakayama's lemma, we deduce that $\ker \zeta=\ker \xi\bcirc\zeta$. Note that $\zeta$ is surjective since $u_1,u_2,\dots,u_m$ generate $U$. The statement follows.
\end{proof}

\subsection{Description of $R^1{\pi_A}_*\widetilde{\mathscr{F}}$}
Combining the results in \cref{(Section)deformation in the direction of major importance,(Section)deformation in the direction of minor importance}, we get the result for deformation in both directions.

Recall that $P^\sharp=\mathbb{P}^\vee(\tensor[^\tau]{\mathscr{F}}{}(\alpha)/\tensor[^\tau]{\mathscr{F}}{})$, $P^\flat=\mathbb{P}(\tensor[^\tau]{\mathscr{F}}{}/\tensor[^\tau]{\mathscr{F}}{}(-\beta))$, $P^\natural=P^\sharp\times P^\flat$. We denoted $H^\sharp$ to be the hyperplane of $P^\sharp$ corresponding to the kernel of the morphism ${\pi_E}_*(\tensor[^{\tau}]{\mathscr{F}}{}(\alpha)/\tensor[^{\tau}]{\mathscr{F}}{})\to{\pi_E}_*(\mathscr{B}(\alpha)/\mathscr{B})$. Recall that $\mathscr{E}_n^\pm=\widetilde{\mathscr{F}}$, and $f^\natural:\Spec A\to P^\natural=P^\sharp\times P^\natural$ is the morphism corresponding to the upper modification of the pair $(\mathscr{E}_n,\widetilde{\mathscr{F}})$ at $\alpha\bcirc\mathsf{p}_A$ and the lower modification of the pair $(\mathscr{E}_n,\widetilde{\mathscr{F}})$ at $\beta\bcirc\mathsf{p}_A$.

\begin{Proposition}\label{deformation in both directions}
  We have an isomorphism $R^1{\pi_A}_*\widetilde{\mathscr{F}}\cong\bigoplus_{i=0}^{h-1}\mathscr{O}_A/\mathscr{K}^{q^i}$, where $\mathscr{K}$ is the ideal sheaf of ${f^\natural}^{-1}(H^\sharp\times P^\flat)$.
\end{Proposition}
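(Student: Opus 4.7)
The plan is to chain together the two previous results in Sections 5.4 and 5.5. Recall that $\mathscr{E}_n^\pm = \widetilde{\mathscr{F}}$ by construction, so $R^1{\pi_A}_*\widetilde{\mathscr{F}} = R^1{\pi_A}_*\mathscr{E}_n^\pm$. By \cref{deformation in the minor direction}, the inclusion $\mathscr{E}_n^\pm \hookrightarrow \mathscr{E}_n^+$ induces an isomorphism $R^1{\pi_A}_*\mathscr{E}_n^\pm \xrightarrow{\sim} R^1{\pi_A}_*\mathscr{E}_n^+$. By \cref{deformation in the major direction}, we have an isomorphism $R^1{\pi_A}_*\mathscr{E}_n^+ \cong \bigoplus_{i=0}^{h-1}\mathscr{O}_A/\mathscr{J}^{q^i}$, where $\mathscr{J}$ is the ideal sheaf of ${f^\sharp}^{-1}(H^\sharp)$.

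It therefore suffices to identify $\mathscr{J}$ with $\mathscr{K}$. This is immediate from the fact that $f^\natural = f^\sharp \times f^\flat : \Spec A \to P^\sharp \times P^\flat$: the preimage under $f^\natural$ of the hyperplane $H^\sharp \times P^\flat \subset P^\natural$ coincides as a closed subscheme of $\Spec A$ with ${f^\sharp}^{-1}(H^\sharp)$, because $H^\sharp \times P^\flat$ is the pullback of $H^\sharp$ along the projection $P^\natural \to P^\sharp$ and $f^\sharp$ is the composition of $f^\natural$ with this projection. Consequently $\mathscr{K} = \mathscr{J}$, and the desired isomorphism follows by composing the isomorphisms above.

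There is no substantive new obstacle here: the genuine technical work was already done in \cref{deformation in the major direction} (computing the cokernel in the $\alpha$-direction via \cref{cokernel of varying lines} and the decomposition $W = \bigoplus_{i=0}^{h-1}W_i$ coming from \cref{injectivity of the boundary map for generic point of X}) and in \cref{deformation in the minor direction} (showing that the lower modifications contribute trivially to $R^1$, via the reduction to $A = A_h$ and the application of \cref{isomorphism on smaller neighborhood implies isomorphism}). The only thing to verify in the present proposition is the compatibility of the ideal sheaves under the splitting $f^\natural = f^\sharp \times f^\flat$, which is formal.
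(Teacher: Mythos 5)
Your proof is correct and follows exactly the paper's argument, which likewise combines \cref{deformation in the minor direction} and \cref{deformation in the major direction}. The only addition you make — spelling out that $\mathscr{J}=\mathscr{K}$ because $f^\sharp$ factors through the projection $P^\natural\to P^\sharp$ — is a formality the paper leaves implicit, and it is handled correctly.
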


\begin{proof}
  The statement follows from \cref{deformation in the major direction} and \cref{deformation in the minor direction}.
\end{proof}

\section{Multiplicity: Result}
In this section, we use results in the previous section to obtain the multiplicities of cohomology of shtukas at horospherical divisors. The goal is to prove \cref{divisor of determinant of cohomology of shtukas}, which describes the determinant of cohomology of the universal shtuka.
\subsection{Definition and Basic Properties for Horocycles}
Fix an integer $d\ge 2$.

Let  $E$ be an algebraically closed field over $\mathbb{F}_q$. Fix two morphisms $\alpha,\beta:\Spec E\to X$ over $\mathbb{F}_q$ satisfying condition (\ref{Frobenius shifts of zero and pole mutually disjoint}).

For a scheme $S$ over $\Spec E$, let $\mathsf{p}_S:S\to\Spec E$ denote the structure morphism.

For all $r\ge 1$, let $\operatorname{Sht}_E^r$ be the moduli stack of shtukas of rank $r$ over $\Spec E$ with zero $\alpha$ and pole $\beta$.

Let $\mathcal{M}$ be the moduli stack of shtukas $\mathscr{F}$ of rank $d$ over $\Spec E$ with zero $\alpha$ and pole $\beta$ satisfying $\chi(\mathscr{F})=0$.

Let $\mathfrak{Vect}_X^r$ denote the set of isomorphism classes of locally free sheaves of rank $r$ on $X$. In particular, we have $\mathfrak{Vect}_X^1=\Pic(X)$.

\begin{Definition}
A \emph{trivial shtuka} of rank $r$ over an $\mathbb{F}_q$-scheme $S$ is a locally free sheaf $\mathscr{E}$ on $X\times S$ of rank $r$ together with an isomorphism $\Phi_S^*\mathscr{E}\xrightarrow{\sim}\mathscr{E}$.
\end{Definition}

Let $\mathcal{T}^r$ denote the stack classifying trivial shtukas of rank $r$ over $\Spec E$. Theorem 2 of Section 3 of Chapter 1 of~\cite{L.Lafforgue97} says that
\[\mathcal{T}^r=\coprod_{\mathscr{E}\in\mathfrak{Vect}_X^r}\mathcal{T}_\mathscr{E}\]
where $\mathcal{T}_{\mathscr{E}}$ is the quotient stack $[\Spec E/\Aut\mathscr{E}]$.

\begin{Definition}
Given $1\le r\le d-1$ and $\mathscr{E}\in\mathfrak{Vect}_X^r$, we define $\mathcal{R}_{\mathscr{E}}^{\I}$ to be the stack which to every scheme $S$ over $\Spec E$ associates the groupoid of exact sequences
\[\begin{tikzcd}
0 \arrow[r]& \mathscr{A} \arrow[r]& \mathscr{F} \arrow[r]& \mathscr{B} \arrow[r]& 0
\end{tikzcd}\]
where

(i) $\mathscr{A}\in\Sht_E^{d-r}(S), \mathscr{F}\in\mathcal{M}(S),\mathscr{B}\in\mathcal{T}^r(S)$;

(ii) the morphisms $\mathscr{A}\to\mathscr{F}$ and $\mathscr{F}\to\mathscr{B}$ are morphisms of shtukas.
\end{Definition}

\begin{Definition}
Given $1\le r\le d-1$ and $\mathscr{E}\in\mathfrak{Vect}_X^r$, we define $\mathcal{R}_{\mathscr{E}}^{\II}$ to be the stack which to every scheme $S$ over $\Spec E$ associates the groupoid of exact sequences
\[\begin{tikzcd}
0 \arrow[r]& \mathscr{A} \arrow[r]& \mathscr{F} \arrow[r]& \mathscr{B} \arrow[r]& 0
\end{tikzcd}\]
where

(i) $\mathscr{A}\in\mathcal{T}^r(S), \mathscr{F}\in\mathcal{M}(S),\mathscr{B}\in\Sht_E^r(S)$;

(ii) the morphisms $\mathscr{A}\to\mathscr{F}$ and $\mathscr{F}\to\mathscr{B}$ are morphisms of shtukas.
\end{Definition}

The following proposition is an immediate consequence of Theorem 5 of Section 1 of Chapter II of~\cite{L.Lafforgue97}.
\begin{Proposition}
The morphism $\mathcal{R}^{\I}_\mathscr{B}\to \mathcal{M}$ (resp. $\mathcal{R}^{\II}_\mathscr{A}\to\mathcal{M}$) is representable, quasi-finite, non-ramified and separated for any locally free sheaf $\mathscr{B}$ (resp. $\mathscr{A}$) on $X$  with $1\le\rank\mathscr{B}\le d-1$ (resp.  $1\le\rank\mathscr{A}\le d-1$).
\end{Proposition}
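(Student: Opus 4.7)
The plan is to deduce the statement from Theorem 5 of Chapter II, \S1 of~\cite{L.Lafforgue97}, which establishes exactly these four properties for the horospherical strata of the moduli of shtukas. Most of the work is translation: I must identify $\mathcal{R}^{\I}_{\mathscr{B}}$ and $\mathcal{R}^{\II}_{\mathscr{A}}$ with the parametrizations Lafforgue considers, and then extract the four properties as direct consequences.

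First I would carry out the identification. An $S$-point of $\mathcal{R}^{\I}_{\mathscr{B}}$ is an $\mathscr{F}\in\mathcal{M}(S)$ together with a sub-bundle $\mathscr{A}\subset\mathscr{F}$ of corank $r$, preserved by the shtuka structure, such that $\mathscr{F}/\mathscr{A}$ is locally free and, as a shtuka, trivial and $\mathscr{O}_X$-isomorphic to $\widetilde{\mathscr{B}}$. The case of $\mathcal{R}^{\II}_{\mathscr{A}}$ is exchanged with a case of $\mathcal{R}^{\I}$ via Construction~C, which swaps sub- and quotient-shtukas as well as the roles of zero and pole, so it suffices to handle $\mathcal{R}^{\I}$. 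This is precisely the filtered-shtuka data that Lafforgue's theorem organizes into a horocycle over $\mathcal{M}$.

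Next I would extract each property. For \emph{representability}, Grothendieck's Quot scheme attached to the universal $\mathscr{F}$ parametrizes subsheaves of the right Hilbert polynomial; the shtuka-stability condition (that the image of $\Phi^*\mathscr{A}$ lies in $\mathscr{A}(\beta)$) and the condition that the quotient is $\mathscr{O}_X$-isomorphic to $\widetilde{\mathscr{B}}$ are respectively closed and locally closed, which puts us in a scheme relative to $\mathcal{M}$ after rigidifying $\Aut\mathscr{B}$ via the presentation $\mathcal{T}^r=\coprod_{\mathscr{E}}[\Spec E/\Aut\mathscr{E}]$. For \emph{quasi-finiteness}, on a geometric fibre over $\mathscr{F}$ any $\Fr$-equivariant surjection $\mathscr{F}\to\widetilde{\mathscr{B}}$ factors through $\mathscr{F}/\mathscr{F}^{\I}$ by~\cref{maximal trivial sub and maximal trivial quotient}, and the set of $\Fr$-equivariant surjections of the fixed locally free sheaf $\mathscr{F}/\mathscr{F}^{\I}$ onto $\widetilde{\mathscr{B}}$ is finite.

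For \emph{non-ramification} I would run standard deformation theory: given a small thickening $A\to A_0$ and a deformation of $\mathscr{F}$ to $\Spec A$, the lifts of an exact sequence over $\Spec A_0$ form a torsor under a $\Hom$-group between trivial shtukas, which is controlled by $\Fr$-invariants and vanishes because of condition~(\ref{zero and pole not related by Frobenius}); in particular the relative tangent space to $\mathcal{M}$ vanishes. For \emph{separatedness}, I would apply the valuative criterion: two lifts over a DVR $R$ that agree generically produce two saturated subbundles of $\mathscr{F}_R$ coinciding at the generic fibre, hence equal, and the two trivializations of the common quotient then agree because $\Hom$ between trivial shtukas over $R$ is a finite discrete set. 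The main point where care is needed is the treatment of the trivialization datum (the $\Fr$-equivariant identification with $\widetilde{\mathscr{B}}$) in both representability and separatedness; once this is rigidified via the $\Aut$-quotient baked into $\mathcal{T}^r$, all four properties are immediate from Lafforgue's theorem.
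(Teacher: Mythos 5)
Your proposal is correct and takes essentially the same approach as the paper: the paper simply states that the proposition is an immediate consequence of Theorem 5 of Section 1 of Chapter II of~\cite{L.Lafforgue97}, and you identify that same theorem as the source. The extra hands-on sketches you give for each of the four properties (Quot schemes, factoring through $\mathscr{F}/\mathscr{F}^{\I}$, deformation torsors, the valuative criterion) are reasonable but unnecessary once you invoke Lafforgue's theorem, which already packages them.
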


For a locally free sheaf $\mathscr{B}$ (resp. $\mathscr{A}$)  on $X$ with $1\le \rank\mathscr{B}\le d-1$ (resp. $1\le\rank\mathscr{A}\le d-1$.), we define the horocycle $\mathcal{Z}^{\I}_\mathscr{B}$ (resp. $\mathcal{Z}^{\II}_\mathscr{A}$) to be the closure of the image of the natural morphism $\mathcal{R}^{\I}_\mathscr{B}\to \mathcal{M}$ (resp $\mathcal{R}^{\II}_\mathscr{A}\to\mathcal{M}$).

 The following proposition is an immediate consequence of Theorem 11 of Section 1 of Chapter II of~\cite{L.Lafforgue97}.

\begin{Proposition}
The morphism
\[\mathcal{R}^{\I}_\mathscr{B}\to \Sht^{d-\rank \mathscr{B}}_E\times \Spec E/\Aut \mathscr{B}\]
\[\text(resp. )\quad \mathcal{R}^{\II}_\mathscr{A}\to \operatorname{Sht}^{d-\rank \mathscr{A}}_E\times \Spec E/\Aut \mathscr{A}\]
is of finite type and smooth of pure relative dimension $\rank \mathscr{B}$ (resp. $\rank\mathscr{A}$).
\end{Proposition}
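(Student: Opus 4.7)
The plan is to identify the morphism with the natural forgetful morphism on a stack of extensions of shtukas and then invoke the cited Theorem 11 of Chapter II, Section 1 of~\cite{L.Lafforgue97}. A point of $\mathcal{R}^{\I}_{\mathscr{B}}$ over $S$ is an exact sequence $0\to\mathscr{A}\to\mathscr{F}\to\mathscr{B}\to 0$ of shtukas in which $\mathscr{A}\in\Sht_E^{d-r}(S)$ carries the full zero $\alpha$ and pole $\beta$ and $\mathscr{B}\in\mathcal{T}^r(S)$ is trivial (has neither zero nor pole). The morphism under consideration forgets the middle term, so its fiber over a geometric point $(\mathscr{A},\mathscr{B})$ is precisely the groupoid of such extensions in the category of shtukas, and the claim reduces to showing that this extension groupoid is smooth of pure dimension $\rank\mathscr{B}=r$.

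First I would unwind what such an extension is. As a locally free sheaf on $X\times S$, the middle term $\mathscr{F}$ is parameterized by a torsor under $\Ext^1_{X\times S}(\mathscr{B},\mathscr{A})=H^1(X\times S,\SHom(\mathscr{B},\mathscr{A}))$. To upgrade such a sheaf-theoretic extension to an extension of shtukas one has to choose a lift $\Phi^*\mathscr{F}\to\mathscr{F}(\beta)$ compatible with the given maps on $\mathscr{A}$ and $\mathscr{B}$. Because $\mathscr{B}$ is trivial and $\mathscr{A}$ carries all of the pole and zero data, the compatibility condition is linear and of Artin--Schreier type: it identifies the locus of shtuka extensions as the equalizer of the identity and a $q$-linear operator inside the $\Ext^1$ torsor, fitting into an exact sequence whose first term is $\Hom(\mathscr{B},\mathscr{A})$ and whose cohomology computes both the tangent space and the obstruction space of the extension stack.

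Then I would compute the relative dimension from the above exact sequence using \cref{cohomology of shtukas over a field} and \cref{h^0 and h^1 of a shtuka} applied to the shtuka $\SHom(\mathscr{B},\mathscr{A})$ (whose maximal trivial sub and quotient are controlled by the triviality of $\mathscr{B}$ and the structure of $\mathscr{A}$): the result is that the tangent space has dimension exactly $r=\rank\mathscr{B}$, while the obstruction group vanishes, giving smoothness. Representability, finite type, and non-ramifiedness were recorded in the preceding proposition. The main obstacle in executing this plan rigorously is to check that the Artin--Schreier sequence above has the desired shape independently of the point $(\mathscr{A},\mathscr{B})$, so that the stack is smooth of \emph{pure} dimension $r$; this uniformity is what Lafforgue's Theorem 11 supplies in its Hecke-theoretic formulation, and once one has matched the two descriptions the proposition follows directly. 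The Type $\II$ case is dual and handled symmetrically by swapping the roles of sub and quotient and replacing $\mathscr{A}$ by its dual.
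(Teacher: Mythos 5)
The paper gives no argument: it states the proposition as an immediate consequence of Theorem 11 of Chapter II, Section 1 of \cite{L.Lafforgue97}. Your proposal correctly identifies that citation as the load-bearing step, so the core approach agrees with the paper. Your supplementary Artin--Schreier sketch, however, mis-attributes the dimension count: \cref{cohomology of shtukas over a field} and \cref{h^0 and h^1 of a shtuka} compute the ordinary $H^0$ and $H^1$ of a shtuka over a field, not the Frobenius-twisted extension groups that classify extensions of a trivial shtuka $\mathscr{B}$ by a shtuka $\mathscr{A}$; in general $h^0$ and $h^1$ of $\SHom(\mathscr{B},\mathscr{A})$ depend on $\deg\mathscr{A}$, $\deg\mathscr{B}$, and the genus of $X$, and do not equal $\rank\mathscr{B}$. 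The number $\rank\mathscr{B}$ in Lafforgue's theorem arises from the lengths of the elementary modifications at the zero and pole (each of degree one), not from those lemmas. Also, the ``preceding proposition'' concerns the morphism $\mathcal{R}^{\I}_\mathscr{B}\to\mathcal{M}$, not the one to $\Sht^{d-\rank\mathscr{B}}_E\times[\Spec E/\Aut\mathscr{B}]$, so the finite-type claim in the present statement also rests on Theorem 11 rather than being inherited. You do flag the uniformity gap at the end, which keeps the proposal honest; since both you and the paper ultimately rest the proof on Lafforgue's theorem, the argument stands, but the sketch as written would need substantial reworking before it could carry the proof on its own.
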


It follows from the above proposition that $\mathcal{R}^{\I}_\mathscr{B}$ and $\mathcal{R}^{\II}_\mathscr{A}$ are reduced. Hence $\mathcal{Z}^{\I}_\mathscr{B}$  and $\mathcal{Z}^{\II}_\mathscr{A}$ are reduced substacks of $\mathcal{M}$.

Since $\operatorname{Sht}^{r}_E$ has pure dimension $2r-2$ over $\Spec E$ and the morphism $\mathcal{R}^{\I}_\mathscr{B}\to\mathcal{M}$ (resp. $\mathcal{R}^{\II}_\mathscr{A}\to\mathcal{M}$) is quasi-finite for each $\mathscr{B}$ (resp. $\mathscr{A}$), we get the following corollary.

\begin{Corollary}\label{codimension of horocycles of type I or type II}
$\mathcal{Z}^{\I}_\mathscr{B}$ (resp. $\mathcal{Z}^{\II}_\mathscr{A}$) has pure codimension $\rank\mathscr{B}$ (resp. $\rank\mathscr{A}$) in $\mathcal{M}$. In particular, $\mathcal{Z}^{\I}_\mathscr{B}$ (resp. $\mathcal{Z}^{\II}_\mathscr{A}$) is a horospherical divisor when $\mathscr{B}$ (resp. $\mathscr{A}$) has rank one.
\end{Corollary}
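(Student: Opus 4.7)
The plan is a dimension count combining the two preceding propositions with the stated fact $\dim \Sht^r_E = 2r - 2$. I will carry it out for $\mathcal{Z}^{\I}_\mathscr{B}$; the case of $\mathcal{Z}^{\II}_\mathscr{A}$ will be symmetric (or reducible to type I via Construction C and duality, which swaps $\alpha$ and $\beta$ and interchanges $\mathscr{A}$ with $\mathscr{B}$).

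First I would compute $\dim \mathcal{R}^{\I}_\mathscr{B}$ by pulling back dimension along the smooth morphism $\mathcal{R}^{\I}_\mathscr{B} \to \Sht^{d-\rank\mathscr{B}}_E \times \Spec E/\Aut \mathscr{B}$ of pure relative dimension $\rank \mathscr{B}$ from the immediately preceding proposition. Using $\dim \Sht^{d-\rank\mathscr{B}}_E = 2(d-\rank\mathscr{B})-2$ and $\dim(\Spec E/\Aut\mathscr{B}) = -\dim \Aut\mathscr{B}$, this yields that $\mathcal{R}^{\I}_\mathscr{B}$ has pure dimension $2d - 2 - \rank\mathscr{B} - \dim \Aut\mathscr{B}$.

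Next I would invoke the other proposition (representable, quasi-finite, separated) to transfer this dimension to the image. A representable quasi-finite morphism of stacks preserves dimension on the image, and the closure of a constructible locus preserves dimension; so $\dim \mathcal{Z}^{\I}_\mathscr{B}$ differs from $\dim \mathcal{R}^{\I}_\mathscr{B}$ only by the shift in inertia between source and image. The key observation is that a generic $\mathscr{F} \in \mathcal{Z}^{\I}_\mathscr{B}$ carries a canonical trivial quotient $\mathscr{F}/\mathscr{F}^{\I} \cong \mathscr{B}$ (by \cref{maximal trivial sub and maximal trivial quotient}), and the induced $\Aut \mathscr{B}$-action on $\mathscr{F}$ provides an extra $\dim \Aut \mathscr{B}$ of inertia beyond the generic scalar $\mathbb{G}_m$ of $\mathcal{M}$. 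This precisely cancels the stacky factor $-\dim \Aut\mathscr{B}$ in the source, yielding $\dim \mathcal{Z}^{\I}_\mathscr{B} = 2d - 2 - \rank \mathscr{B}$, i.e.\ codimension $\rank \mathscr{B}$ in $\mathcal{M}$ (of pure dimension $2d - 2$). The horospherical divisor claim is then immediate: substituting $\rank = 1$ gives codimension one.

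The hard part will be the stacky bookkeeping: cleanly identifying $\Aut \mathscr{B}$ as extra inertia at the generic point of $\mathcal{Z}^{\I}_\mathscr{B}$ (via the canonical maximal trivial quotient $\mathscr{F}/\mathscr{F}^{\I}$), and verifying that this automorphism group acts on $\mathscr{F}$ in a way that matches the stacky factor $\Spec E/\Aut \mathscr{B}$ in the smoothness proposition. Without this identification, the naive count would leave a spurious $\dim \Aut \mathscr{B}$ in the codimension, so the honest work is in justifying that the source-to-target dimension shift in the representable morphism $\mathcal{R}^{\I}_\mathscr{B}\to\mathcal{M}$ is exactly $\dim\Aut\mathscr{B}$ along the open locus where $\mathscr{F}^{\I}$ is a genuine subshtuka with trivial quotient uniquely isomorphic to $\mathscr{B}$.
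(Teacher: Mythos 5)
Your overall strategy is the same as the paper's: combine the smoothness proposition and the quasi-finiteness proposition with $\dim\Sht^r_E=2r-2$. But you introduce a spurious term $\dim\Aut\mathscr{B}$ and then a wrong mechanism to cancel it, and both steps are mistakes.

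The term $\dim\Aut\mathscr{B}$ is simply zero. Here $\mathscr{B}$ is a locally free sheaf on the projective curve $X$ defined over $\mathbb{F}_q$, so $\End_{\mathscr{O}_X}(\mathscr{B}) = H^0(X,\mathscr{E}\kern -2pt nd\,\mathscr{B})$ is a finite-dimensional $\mathbb{F}_q$-vector space, hence a finite set, and $\Aut\mathscr{B}$ is a finite group. (This is also what makes $\mathcal{T}_\mathscr{B}=[\Spec E/\Aut\mathscr{B}]$ a Deligne--Mumford stack.) Therefore $\Spec E/\Aut\mathscr{B}$ has dimension $0$, and $\dim\mathcal{R}^{\I}_\mathscr{B}=2(d-\rank\mathscr{B})-2+\rank\mathscr{B}=2d-2-\rank\mathscr{B}$ with no correction. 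Since $\mathcal{R}^{\I}_\mathscr{B}\to\mathcal{M}$ is representable and quasi-finite, the closure of its image has the same dimension, giving codimension $\rank\mathscr{B}$ directly.

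The cancellation you propose does not work. An automorphism of the trivial quotient $\mathscr{F}/\mathscr{F}^{\I}\cong\mathscr{B}$ does not lift to an automorphism of $\mathscr{F}$, so there is no ``induced $\Aut\mathscr{B}$-action on $\mathscr{F}$.'' Moreover, the stacky inertia of $\mathcal{M}$ at a generic point of $\mathcal{Z}^{\I}_\mathscr{B}$ is a finite group (any stack of Drinfeld shtukas is Deligne--Mumford; the scalar automorphisms compatible with the shtuka structure form $\mathbb{F}_q^\times$, not $\mathbb{G}_m$), so it cannot absorb a putative positive-dimensional factor. And finally, your own first sentence of that paragraph — that a representable quasi-finite morphism preserves dimension of the image — is correct and already shows there is no inertia shift to exploit; the two halves of your argument contradict each other. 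The fix is simply to observe $\dim\Aut\mathscr{B}=0$ and drop the inertia discussion.
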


\begin{Definition}
Given $d\ge 2, i,j\ge 1, i+j\le d$ and $\mathscr{A}\in\mathfrak{Vect}_X^i,\mathscr{B}\in\mathfrak{Vect}_X^j$, we define $\mathcal{R}^{\I\wedge\II}_{\mathscr{A},\mathscr{B}}$ be the stack which to every scheme $S$ over $\Spec E$ associates the groupoid of the following data:

(i) an exact sequence of shtukas $0\to\mathscr{N}\to\mathscr{F}\to\mathscr{Q}\to 0$, where $\mathscr{N}\in\Sht_E^{d-j}(S), \mathscr{F}\in\mathcal{M}(S), \mathscr{Q}\in\mathcal{T}_{\mathscr{B}}(S)$.

(ii) an exact sequence of shtukas $0\to\mathscr{S}\to\mathscr{N}\to\mathscr{M}\to 0$, where $\mathscr{S}\in\mathcal{T}_{\mathscr{A}}(S), \mathscr{M}\in\Sht_E^{d-i-j}(S)$ and $\mathscr{N}$ is as in (i).

The above data (i), (ii) are equivalent to the following data:

(i') an exact sequence of shtukas $0\to\mathscr{S}\to\mathscr{F}\to\mathscr{L}\to 0$, where $\mathscr{S}\in\mathcal{T}_\mathscr{A}(S),\mathscr{F}\in\mathcal{M}(S),\mathscr{L}\in\Sht_E^{d-i}(S)$;

(ii') an exact sequence of shtukas $0\to\mathscr{M}\to\mathscr{L}\to\mathscr{Q}\to 0$, where $\mathscr{M}\in\Sht_E^{d-i-j}(S),\mathscr{Q}\in\mathcal{T}_\mathscr{B}(S)$ and $\mathscr{L}$ is as in (i').
\end{Definition}

Theorem 11 of Section 1 of Chapter II of~\cite{L.Lafforgue97} implies the following proposition. See also the remark on top of page 103 of~\cite{L.Lafforgue97}.

\begin{Proposition}
The canonical morphism $\mathcal{R}^{\I\wedge\II}_{\mathscr{A},\mathscr{B}}\to\Sht_E^{d-\rank\mathscr{A}-\rank\mathscr{B}}$ is of finite type and smooth of relative dimension $\rank\mathscr{A}+\rank\mathscr{B}$.
\end{Proposition}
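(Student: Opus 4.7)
The plan is to factor the morphism $\mathcal{R}^{\I\wedge\II}_{\mathscr{A},\mathscr{B}}\to\Sht_E^{d-i-j}$ (writing $i=\rank\mathscr{A}$, $j=\rank\mathscr{B}$) through an auxiliary stack, and then apply the preceding proposition twice, once with ambient rank $d$ and once with ambient rank $d-i$. The key structural observation is the equivalent formulation of the data as $(i'),(ii')$, which exhibits $\mathcal{R}^{\I\wedge\II}_{\mathscr{A},\mathscr{B}}$ as a fiber product of horocycle-type stacks over a common middle term $\mathscr{L}$.

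Concretely, I would introduce the stack $\mathcal{R}'$ which to every $S/\Spec E$ associates the groupoid of short exact sequences of shtukas
\[0\to\mathscr{M}\to\mathscr{L}\to\mathscr{Q}\to 0\]
with $\mathscr{M}\in\Sht_E^{d-i-j}(S)$, $\mathscr{L}\in\Sht_E^{d-i}(S)$, $\mathscr{Q}\in\mathcal{T}_{\mathscr{B}}(S)$. This is literally the construction of $\mathcal{R}^{\I}_{\mathscr{B}}$ with the moduli stack of rank $d$ shtukas with $\chi=0$ replaced by $\Sht_E^{d-i}$; since the cited theorem of Lafforgue is formulated for arbitrary rank, the preceding proposition applies verbatim in this substituted setting, yielding that the projection $\mathcal{R}'\to\Sht_E^{d-i-j}$ that remembers only $\mathscr{M}$ is of finite type and smooth of pure relative dimension $j$.

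Forgetting the extension $(i')$ while retaining $(ii')$ defines a natural morphism $\mathcal{R}^{\I\wedge\II}_{\mathscr{A},\mathscr{B}}\to\mathcal{R}'$. I claim the square
\[
\begin{tikzcd}
\mathcal{R}^{\I\wedge\II}_{\mathscr{A},\mathscr{B}} \arrow[r] \arrow[d] & \mathcal{R}^{\II}_{\mathscr{A}} \arrow[d] \\
\mathcal{R}' \arrow[r] & \Sht_E^{d-i}
\end{tikzcd}
\]
is Cartesian, where the right vertical map sends $(i')$ to its quotient $\mathscr{L}$ and the bottom horizontal map sends $(ii')$ to its source $\mathscr{L}$. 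This is a direct unpacking of the equivalence of the data $(i),(ii)$ with $(i'),(ii')$: a compatible pair on the right and bottom is a pair of short exact sequences sharing a common middle term $\mathscr{L}\in\Sht_E^{d-i}(S)$, which is exactly what packages into $\mathcal{R}^{\I\wedge\II}_{\mathscr{A},\mathscr{B}}(S)$. Invoking the preceding proposition for $\mathcal{R}^{\II}_{\mathscr{A}}\to\Sht_E^{d-i}$, which is of finite type and smooth of relative dimension $i$, and base change, the morphism $\mathcal{R}^{\I\wedge\II}_{\mathscr{A},\mathscr{B}}\to\mathcal{R}'$ is also of finite type and smooth of relative dimension $i$.

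Composing the two morphisms gives the statement: $\mathcal{R}^{\I\wedge\II}_{\mathscr{A},\mathscr{B}}\to\Sht_E^{d-i-j}$ is of finite type and smooth of pure relative dimension $i+j=\rank\mathscr{A}+\rank\mathscr{B}$. The only substantive step is the Cartesian property of the square, which is essentially formal from the equivalence of $(i),(ii)$ with $(i'),(ii')$, so I expect no serious obstacle here. A minor bookkeeping subtlety is that one must apply the cited results of Lafforgue with ambient rank $d-i$ rather than $d$; this is legitimate because those results hold for arbitrary rank.
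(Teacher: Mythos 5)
Your proof is correct, but it takes a genuinely different route from the paper. The paper simply invokes Theorem 11 of Section 1 of Chapter II of Lafforgue (and a remark on p.\ 103) to establish the proposition directly, exactly as it does for the preceding proposition about $\mathcal{R}^{\I}_{\mathscr{B}}$ and $\mathcal{R}^{\II}_{\mathscr{A}}$. You instead \emph{derive} the statement from the preceding proposition by exhibiting $\mathcal{R}^{\I\wedge\II}_{\mathscr{A},\mathscr{B}}$ as a fiber product of two horocycle-type correspondences over the common middle term $\mathscr{L}$, using the equivalent packaging $(i'),(ii')$ of the data; the two factors are then handled by the preceding proposition (one verbatim, one after replacing the ambient moduli stack $\mathcal{M}$ of rank-$d$, $\chi=0$ shtukas by $\Sht_E^{d-i}$ with no Euler-characteristic constraint). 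This is a legitimate and somewhat more self-contained argument, showing the second proposition as a formal consequence of the first; but note that the step where you apply the preceding proposition to $\mathcal{R}'$ is not literally covered by its statement in the paper (the middle term there is an object of $\mathcal{M}$, not an arbitrary shtuka of arbitrary rank), so you are implicitly re-invoking the generality of Lafforgue's Theorem 11 anyway, just at a different point of the argument. You correctly flag this as a bookkeeping issue, and it is indeed harmless since the cited theorem is stated for arbitrary rank and the $\chi$-constraint only selects a connected component. One further small point you may want to make explicit: the preceding proposition's target is $\Sht^{d-r}_E\times\Spec E/\Aut\mathscr{E}$, but your diagram targets $\Sht^{d-i}_E$; since $\Aut\mathscr{E}$ for a vector bundle on $X$ over $\mathbb{F}_q$ is a finite group, the projection that drops the $\Spec E/\Aut\mathscr{E}$ factor is smooth of relative dimension $0$, so the relative dimension count $i+j$ is unaffected. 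With those two small clarifications your argument is complete, and it offers some insight into the structure that the paper's bare citation does not.
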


\begin{Corollary}\label{dimension of horocycles of type I+II}
$\mathcal{R}^{\I\wedge\II}_{\mathscr{A},\mathscr{B}}$ has dimension $(2d-2)-(\rank\mathscr{A}+\rank\mathscr{B})$ over $\Spec E$.
\end{Corollary}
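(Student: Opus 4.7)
The plan is to obtain this corollary as an immediate dimension count along the smooth morphism furnished by the preceding proposition. Two inputs are already on the table: first, the fact recalled earlier in this section that $\Sht_E^r$ has pure dimension $2r-2$ over $\Spec E$; second, the preceding proposition, which asserts that the canonical morphism $\mathcal{R}^{\I\wedge\II}_{\mathscr{A},\mathscr{B}} \to \Sht_E^{d - \rank\mathscr{A} - \rank\mathscr{B}}$ is smooth of pure relative dimension $\rank\mathscr{A} + \rank\mathscr{B}$. Because smoothness of stacks with pure-dimensional base preserves pure-dimensionality and adds dimensions fiberwise, the whole computation reduces to adding the two numbers.

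Writing $i = \rank\mathscr{A}$, $j = \rank\mathscr{B}$, $r = d - i - j$, the arithmetic is
\[
\dim \mathcal{R}^{\I\wedge\II}_{\mathscr{A},\mathscr{B}}
= (2r - 2) + (i+j)
= 2(d-i-j) - 2 + i + j
= (2d - 2) - (i+j),
\]
which is exactly the claimed formula. No further input is needed beyond the two facts already quoted.

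There is essentially no obstacle here: the entire geometric content of the corollary is packaged into the preceding proposition (itself a direct reference to Theorem 11 of Chapter II, Section 1 of~\cite{L.Lafforgue97}), and the computation above is a one-line addition. The only mild caveat worth flagging is the degenerate case $i+j = d$, where the target $\Sht_E^0$ needs a convention, but that case lies outside the range in which the relative-dimension statement of the preceding proposition is intended, so it does not affect the argument in the generic range.
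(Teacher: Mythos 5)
Your proof is correct and matches the paper's intent exactly: the paper leaves this corollary unproved precisely because it is the one-line dimension count you carried out, combining the purity and relative dimension of the smooth morphism $\mathcal{R}^{\I\wedge\II}_{\mathscr{A},\mathscr{B}}\to\Sht_E^{d-\rank\mathscr{A}-\rank\mathscr{B}}$ from the preceding proposition with the fact that $\Sht_E^r$ has pure dimension $2r-2$.
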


\begin{Lemma}\label{horospherical divisor of type I not contained in other horocycles of type I}
Let $\mathscr{B}\in\Pic(X)$. Then for any $1\le i\le d-1$, $\mathscr{B}'\in\mathfrak{Vect}_X^i$ such that $\mathscr{B}'\not\cong\mathscr{B}$, the horocycle $\mathcal{Z}^{\I}_{\mathscr{B}'}$ does not contain any irreducible component of the horocycle $\mathcal{Z}^{\I}_\mathscr{B}$.
\end{Lemma}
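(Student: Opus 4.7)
My plan is to split the argument into two cases based on $i = \rank \mathscr{B}'$.

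When $i \ge 2$, the horocycle $\mathcal{Z}^{\I}_{\mathscr{B}'}$ has pure codimension $i \ge 2$ in $\mathcal{M}$ by \cref{codimension of horocycles of type I or type II}, while $\mathcal{Z}^{\I}_{\mathscr{B}}$ has pure codimension $1$. Consequently no irreducible component of $\mathcal{Z}^{\I}_{\mathscr{B}}$ can be contained in $\mathcal{Z}^{\I}_{\mathscr{B}'}$ on dimension grounds, and the lemma holds in this case.

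The interesting case is $i = 1$, when both horocycles are pure of codimension $1$. Here I would argue by contradiction: suppose an irreducible component $Z$ of $\mathcal{Z}^{\I}_{\mathscr{B}}$ sits inside $\mathcal{Z}^{\I}_{\mathscr{B}'}$. Matching pure codimensions forces $Z$ to be an irreducible component of $\mathcal{Z}^{\I}_{\mathscr{B}'}$ as well. The image of $\mathcal{R}^{\I}_{\mathscr{B}} \to \mathcal{M}$ is constructible and dense in its closure $\mathcal{Z}^{\I}_{\mathscr{B}}$, so the generic point $\eta$ of $Z$ belongs to this image; for the same reason $\eta$ also lies in the image of $\mathcal{R}^{\I}_{\mathscr{B}'} \to \mathcal{M}$. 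Hence the shtuka $\mathscr{F}_\eta$ classified by $\eta$ admits shtuka surjections onto both of the rank-one trivial shtukas $\mathscr{B}$ and $\mathscr{B}'$.

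The crux is that both surjections necessarily factor through the maximal trivial quotient $\mathscr{T}_0 := \mathscr{F}_\eta / \mathscr{F}_\eta^{\I}$ supplied by \cref{maximal trivial sub and maximal trivial quotient}, which is a locally free sheaf carrying a trivial Frobenius structure. If $\mathscr{T}_0$ had rank one, the two induced surjections $\mathscr{T}_0 \twoheadrightarrow \mathscr{B}$ and $\mathscr{T}_0 \twoheadrightarrow \mathscr{B}'$ between line bundles would both be isomorphisms, yielding $\mathscr{B} \cong \mathscr{T}_0 \cong \mathscr{B}'$, contradicting $\mathscr{B} \not\cong \mathscr{B}'$. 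Hence $\mathscr{T}_0$ has rank $r \ge 2$, and the canonical shtuka surjection $\mathscr{F}_\eta \twoheadrightarrow \mathscr{T}_0$ places $\eta$ in $\mathcal{Z}^{\I}_{\mathscr{T}_0}$. Because $\mathcal{Z}^{\I}_{\mathscr{T}_0}$ is closed, $Z = \overline{\{\eta\}} \subseteq \mathcal{Z}^{\I}_{\mathscr{T}_0}$; but \cref{codimension of horocycles of type I or type II} says the latter has pure codimension $r \ge 2$, contradicting that $Z$ has codimension $1$.

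The conceptual heart is the passage from ``$\mathscr{F}_\eta$ has two non-isomorphic trivial line-bundle quotients'' to ``the maximal trivial quotient has rank $\ge 2$'', which hinges on \cref{maximal trivial sub and maximal trivial quotient} to identify $\mathscr{T}_0$ as universal among trivial-shtuka quotients; once this is in place the contradiction is purely dimensional. A minor technical point is that the generic points of irreducible components of the horocycles actually lie in the images of the parametrizing stacks, which is a standard consequence of Chevalley-type constructibility applied to the quasi-finite representable morphisms $\mathcal{R}^{\I}_\bullet \to \mathcal{M}$.
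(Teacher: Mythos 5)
Your proposal is correct, and the $i\ge 2$ case and the contradiction skeleton agree with the paper. The interesting comparison is in the $i=1$ case, where you take a genuinely different — and arguably cleaner — route to the rank-$\ge 2$ trivial quotient. The paper sets $\mathscr{S}'' = \mathscr{S}\cap\mathscr{S}'$ (the intersection of the two saturated kernels of rank $d-1$), observes $\rank\mathscr{S}''=d-2$, and asserts directly that $\mathscr{F}_\eta/\mathscr{S}''$ is a rank-two trivial shtuka, then concludes by the codimension bound. Your argument instead appeals to the universal property of $\mathscr{F}_\eta^{\I}$ from \cref{maximal trivial sub and maximal trivial quotient}: since each kernel $\mathscr{S},\mathscr{S}'$ lies in the poset $S_1$ (the quotient is locally free and carries a trivial Frobenius), both contain $\mathscr{F}_\eta^{\I}$, so both quotients factor through $\mathscr{T}_0 = \mathscr{F}_\eta/\mathscr{F}_\eta^{\I}$. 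If $\rank\mathscr{T}_0=1$ you force $\mathscr{B}\cong\mathscr{B}'$; otherwise $\mathscr{T}_0$ is a trivial shtuka of rank $r\ge 2$ by construction, and the codimension contradiction follows. This has the advantage that the triviality of the relevant quotient comes for free from the proposition rather than needing to be re-verified for the ad hoc intersection $\mathscr{S}''$; you get a stronger conclusion ($r$ could be larger than 2) at no extra cost.

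Two small gaps you should fill. First, you silently assume $r\le d-1$ so that $\mathcal{Z}^{\I}_{\mathscr{T}_0}$ is defined; this holds because $\mathscr{F}_\eta$ cannot itself be a trivial shtuka — its zero and pole have disjoint graphs by condition (\ref{Frobenius shifts of zero and pole mutually disjoint}), so $\Phi^*\det\mathscr{F}_\eta\not\cong\det\mathscr{F}_\eta$ — hence $\mathscr{F}_\eta^{\I}\ne 0$ and $r<d$. Second, $\mathscr{T}_0$ is a trivial shtuka over $\Spec L$, whereas the horocycles $\mathcal{Z}^{\I}_{\bullet}$ are indexed by bundles on $X$; you need the decomposition $\mathcal{T}^r=\coprod_{\mathscr{E}\in\mathfrak{Vect}_X^r}\mathcal{T}_{\mathscr{E}}$ to identify $\mathscr{T}_0$ with some $\mathscr{E}\otimes L$ and conclude that $\eta$ lands in $\mathcal{Z}^{\I}_{\mathscr{E}}$ (the same decomposition for $r=1$ underlies your reduction from $\mathscr{B}\otimes L\cong\mathscr{B}'\otimes L$ to $\mathscr{B}\cong\mathscr{B}'$). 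Both are routine and consistent with the paper's setup, but should be stated.
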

\begin{proof}
Let $\mathcal{Y}$ be an irreducible component of $\mathcal{Z}^{\I}_\mathscr{B}$. Suppose $\mathcal{Y}$ is contained in $\mathcal{Z}^{\I}_{\mathscr{B}'}$ for some $\mathscr{B}'\in\mathfrak{Vect}_X^i$ such that $\mathscr{B}'\not\cong\mathscr{B}$. For dimensional reasons, we have $\rank\mathscr{B}'=1$. Choose a morphism $\eta:\Spec L\to\mathcal{Y}$, where $L$ is a field containing $E$, such that its image is dense in $\mathcal{Y}$. We can assume that $L$ is large enough so that $\eta$ has lifts to $\mathcal{R}^{\I}_\mathscr{B}$ and $\mathcal{R}^{\I}_{\mathscr{B}'}$. Then we have two exact sequences for the shtuka $\mathscr{F}$ over $\Spec L$
\[\begin{tikzcd}0\arrow[r]& \mathscr{S} \arrow[r] & \mathscr{F} \arrow[r] & \mathscr{Q} \arrow[r] & 0\end{tikzcd}\]
\[\begin{tikzcd}0\arrow[r]& \mathscr{S}'  \arrow[r] & \mathscr{F} \arrow[r] & \mathscr{Q}' \arrow[r] & 0\end{tikzcd}\]
where $\mathscr{Q}\in\mathcal{T}_{\mathscr{B}}(L)$ and $\mathscr{Q}'\in\mathcal{T}_{\mathscr{B}'}(L)$. Since $\mathscr{B}\not\cong\mathscr{B}'$, we have $\mathscr{S}\ne\mathscr{S}'$. Since $\mathscr{S}$ and $\mathscr{S}'$ are saturated in $\mathscr{F}$, so is $\mathscr{S}''=\mathscr{S}\cap\mathscr{S}'$. Thus $\rank \mathscr{S}''=\rank\mathscr{S}-1$. There is also an exact sequence of shtukas
\[\begin{tikzcd}0\arrow[r]& \mathscr{S}'' \arrow[r] & \mathscr{F} \arrow[r] & \mathscr{Q}'' \arrow[r] & 0\end{tikzcd}\]
where $\mathscr{Q}''$ is a trivial shtuka over $\Spec L$ of rank 2. This shows that the image of $\eta$ is contained in $\mathcal{Z}^{\I}_{\mathscr{B}''}$ for some $\mathscr{B}''\in\mathfrak{Vect}_X^2$. Since $\mathcal{Z}^{\I}_{\mathscr{B}''}$ is closed and the image of $\eta$ is dense in $\mathcal{Y}$, we see that $\mathcal{Y}$ is contained in $\mathcal{Z}^{\I}_{\mathscr{B}''}$. By \cref{codimension of horocycles of type I or type II}, $\mathcal{Y}$ has codimension 1 in $\mathcal{M}$, while $\mathcal{Z}^{\I}_{\mathscr{B}''}$ has codimension 2 in $\mathcal{M}$, a contradiction.
\end{proof}

The following statement is Proposition 1.1 of~\cite{Drinfeld1}.
\begin{Proposition}\label{descent of coherent sheaves for projective scheme over finite field}
  Let $S$ be a projective scheme over $\mathbb{F}_q$. Let $L$ be an algebraically closed field over $\mathbb{F}_q$. Then the functor $\mathscr{F}\mapsto\mathscr{F}\otimes L$ is an equivalence between the category of coherent sheaves $\mathscr{F}$ on $S$ and the category of coherent sheaves $\mathscr{M}$ on $S\otimes L$ equipped with an isomorphism $(\Id_S\otimes\Fr_L)^*\mathscr{M}\simto\mathscr{M}$. \qed
\end{Proposition}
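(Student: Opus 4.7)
The plan is to prove the equivalence by verifying the functor is fully faithful and essentially surjective, reducing essential surjectivity to a descent statement for finite-dimensional $L$-vector spaces combined with Serre's theorem on generation by global sections.

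For full faithfulness, I use that since $S$ is projective over $\mathbb{F}_q$, for coherent sheaves $\mathscr{F},\mathscr{G}$ on $S$ the group $\Hom_S(\mathscr{F},\mathscr{G})=H^0(S,\SHom(\mathscr{F},\mathscr{G}))$ is a finite-dimensional $\mathbb{F}_q$-vector space. Flat base change along $\Spec L\to\Spec\mathbb{F}_q$ gives
\[
\Hom_{S\otimes L}(\mathscr{F}\otimes L,\,\mathscr{G}\otimes L)\;=\;\Hom_S(\mathscr{F},\mathscr{G})\otimes_{\mathbb{F}_q}L.
\]
A morphism on the right is a morphism in the target category precisely when it is invariant under the induced $\Fr_L$-action on the tensor product, and since $L$ is algebraically closed, $\{x\in L:x^q=x\}=\mathbb{F}_q$. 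Taking invariants therefore recovers $\Hom_S(\mathscr{F},\mathscr{G})$, so the functor is fully faithful.

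The key technical input is the following descent lemma for vector spaces: if $V$ is a finite-dimensional $L$-vector space equipped with a $\Fr_L$-semilinear automorphism $\phi$, then $V^\phi$ is an $\mathbb{F}_q$-vector space of dimension $\dim_L V$, and the natural map $V^\phi\otimes_{\mathbb{F}_q}L\simto V$ is an isomorphism. This is a classical consequence of Lang's theorem applied to the $\Fr$-twisted $GL_n(L)$-torsor of $\mathbb{F}_q$-structures on $V$, and is where the hypothesis that $L$ is algebraically closed enters in an essential way. This lemma is the main (and only) nontrivial external input in the argument.

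For essential surjectivity, I fix a very ample line bundle $\mathscr{O}_S(1)$ and, given $(\mathscr{M},\phi)$ on $S\otimes L$, apply Serre's theorem to choose $n$ large enough that $\mathscr{M}(n)$ is globally generated with vanishing higher cohomology. The space $V_n:=H^0(S\otimes L,\mathscr{M}(n))$ inherits a $\Fr_L$-semilinear automorphism from $\phi$, and the lemma supplies a canonical $\mathbb{F}_q$-structure $V_n^\phi\otimes_{\mathbb{F}_q}L\simto V_n$. The tautological evaluation $V_n\otimes_L\mathscr{O}_{S\otimes L}(-n)\twoheadrightarrow\mathscr{M}$ is Frobenius-equivariant, and its kernel again carries a Frobenius structure; repeating the argument on the kernel for some $n'\gg n$ yields a presentation
\[
V_{n'}\otimes_L\mathscr{O}_{S\otimes L}(-n')\longrightarrow V_n\otimes_L\mathscr{O}_{S\otimes L}(-n)\longrightarrow\mathscr{M}\longrightarrow 0.
\]
The first two terms descend to $S$ by the lemma, and the first arrow is Frobenius-equivariant, hence descends by the full faithfulness already proved. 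Taking its cokernel on $S$ produces a coherent sheaf $\mathscr{F}$ whose base change $\mathscr{F}\otimes L$ is isomorphic to $\mathscr{M}$ compatibly with $\phi$, completing the proof.
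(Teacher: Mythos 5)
The paper itself gives no proof: it cites this as Proposition 1.1 of Drinfeld's paper and marks the statement with a $\qed$. Your argument is correct, and it is the standard one (and, as far as I can tell, essentially Drinfeld's): reduce to the descent lemma for finite-dimensional $L$-vector spaces equipped with a bijective $\Fr_L$-semilinear endomorphism, obtained from Lang's theorem, and use Serre's theorem to produce a Frobenius-equivariant two-step presentation of $\mathscr{M}$ by sums of twists $\mathscr{O}_{S\otimes L}(-n)$, which then descends termwise.

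Two small remarks on where the hypotheses actually enter. In the full faithfulness step, the identity $\{x\in L:x^q=x\}=\mathbb{F}_q$, and more generally $(M\otimes_{\mathbb{F}_q}L)^{\Id\otimes\Fr_L}=M$ for any $\mathbb{F}_q$-vector space $M$, hold for \emph{every} field $L$ containing $\mathbb{F}_q$, since $x^q-x$ already has all $q$ of its roots in $\mathbb{F}_q$; so algebraic closedness is not needed there, contrary to what your phrasing suggests. It genuinely enters in the essential surjectivity step via Lang's theorem, where it is worth noting explicitly that the theorem, though usually stated over $\bar{\mathbb{F}}_q$, holds over any algebraically closed $L\supset\mathbb{F}_q$: the Lang map $g\mapsto g^{-1}\Fr(g)$ on $GL_n$ is a finite \'etale surjection defined over $\mathbb{F}_q$, hence surjective on $L$-points. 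You also tacitly use that $L$ is perfect so that $\Id_S\otimes\Fr_L$ is an isomorphism of schemes, which makes the semilinear map on $H^0(S\otimes L,\mathscr{M}(n))$ bijective; this too is covered by the hypothesis. None of this affects the validity of the proof.
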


\begin{Lemma}\label{non-containment for subshtuka of the same rank}
  Let $L$ be a field over $\mathbb{F}_q$. Let $\mathscr{F}$ be a shtuka over $\Spec L$ with zero $\alpha$ and pole $\beta$ satisfying condition (\ref{Frobenius shifts of zero and pole mutually disjoint}). Let $\mathscr{G}$ be a subshtuka of $\mathscr{F}$ of the same rank with the same zero and pole. Then $\Phi_L^*\mathscr{G}\not\subset\mathscr{F}$ and $\mathscr{G}\not\subset\Phi_L^*\mathscr{F}$.
\end{Lemma}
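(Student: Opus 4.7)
The approach is to reduce to the rank-one case via determinants, and then use condition (\ref{Frobenius shifts of zero and pole mutually disjoint}) to pin down the support of the divisor $D$ for which $\mathscr{G}=\mathscr{F}(-D)$.

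\emph{Reduction to rank one.} The inclusion $\mathscr{G}\hookrightarrow\mathscr{F}$ descends on determinants to a rank-one subshtuka $\det\mathscr{G}\hookrightarrow\det\mathscr{F}$ with the same zero $\alpha$ and pole $\beta$, and the putative containments $\Phi^*\mathscr{G}\subset\mathscr{F}$ and $\mathscr{G}\subset\Phi^*\mathscr{F}$ inside $\mathscr{F}(\beta)$ descend to the analogous containments of the determinants inside $(\det\mathscr{F})(\beta)$. So we may assume $d=1$ and write $\mathscr{G}=\mathscr{F}(-D)$ for an effective divisor $D$ on $X\otimes L$. Comparing the restriction of the Frobenius of $\mathscr{F}$ to $\Phi^*\mathscr{G}$ (whose image in $\mathscr{F}(\beta)$ is $\mathscr{F}(\beta-\alpha-\Phi^*D)$) with the Frobenius of $\mathscr{G}$ (whose image is $\mathscr{G}(\beta-\alpha)=\mathscr{F}(\beta-\alpha-D)$), the subshtuka compatibility forces $\Phi^*D=D$.

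\emph{Locating $\supp D$.} After base-changing $L$ to its perfection (which preserves both (\ref{Frobenius shifts of zero and pole mutually disjoint}) and the hypothetical containments), we may assume $L$ is perfect, so that $\Phi$ is an automorphism of $X\otimes L$. Since $X\otimes L$ is one-dimensional, $\supp D$ is a finite set of closed points, and $\Phi$ permutes them. If $\Gamma_\alpha$ were one of them, its backward $\Phi$-orbit $\{\Phi^{-n}\Gamma_\alpha=\Gamma_{\Fr_X^n\bcirc\alpha}:n\ge 0\}$ would also lie in $\supp D$; but the members of this orbit are mutually distinct by (\ref{Frobenius shifts of zero and pole mutually disjoint}), an infinite set inside a finite one---impossible. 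The same argument rules out $\Gamma_\beta$, so $\supp D\cap(\Gamma_\alpha\cup\Gamma_\beta)=\emptyset$.

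\emph{Conclusion and main obstacle.} If $\Phi^*\mathscr{G}\subset\mathscr{F}$ inside $\mathscr{F}(\beta)$, then $\mathscr{F}(\beta-\alpha-D)\subset\mathscr{F}$, so the divisor $\alpha+D-\beta$ is effective; but $\alpha\ne\beta$ (the case $i=j=0$ of (\ref{Frobenius shifts of zero and pole mutually disjoint})) together with $\beta\notin\supp D$ forces the coefficient of $\Gamma_\beta$ in $\alpha+D$ to be zero, a contradiction. Symmetrically, $\mathscr{G}\subset\Phi^*\mathscr{F}$ yields $D+\beta-\alpha$ effective, contradicted by $\alpha\notin\supp D$. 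The hardest step is the middle one: it is where condition (\ref{Frobenius shifts of zero and pole mutually disjoint}) enters essentially (through the infinite Frobenius orbit of $\Gamma_\mu$) and where the reduction to perfect $L$ is convenient.
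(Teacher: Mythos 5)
Your proof is correct and follows the same strategy as the paper: reduce to rank one via determinants, derive the divisor relation $\Phi_L^*D=D$ (the paper writes this as $\Fr_X\bcirc\beta+\Phi^*W=\beta+W$ with $D=\Gamma_\beta+W$), and extract a contradiction from condition~(\ref{Frobenius shifts of zero and pole mutually disjoint}). The only difference is in the final step: the paper directly invokes \cref{difference between divisor and its Frobenius}, whose inductive proof works over an arbitrary field, whereas you pass to the perfection of $L$ so that $\Phi$ becomes an automorphism and then argue via the finiteness of $\supp D$ — a harmless repackaging of the same descending-chain idea.
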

\begin{proof}
   We apply $d$-th exterior power to all sheaves involved to reduce the problem to the case $d=1$.

   Suppose $\Phi_L^*\mathscr{G}\subset \mathscr{F}$. Then we have $\mathscr{F}=\mathscr{G}(\beta+W)$ for some effective divisor $W$ of $X\otimes L$. From the isomorphisms $\Phi_L^*\mathscr{G}\cong\mathscr{G}(\beta-\alpha)$ and $\Phi_L^*\mathscr{F}\cong\mathscr{F}(\beta-\alpha)$ we deduce that $\beta-\alpha+\Fr_X\bcirc\beta+\Phi^*W=\beta+W+\beta-\alpha$. Hence $\beta+W=\Fr_X\bcirc\beta+\Phi^*W$. Applying \cref{difference between divisor and its Frobenius} to the two morphisms $\beta,\Fr_X\bcirc\beta:\Spec L\to X$, we see that $\beta=\Fr_X^i\bcirc\beta$ for some $i\ge 1$, a contradiction to condition (\ref{Frobenius shifts of zero and pole mutually disjoint}).

   The proof of the second statement is similar.
\end{proof}

\begin{Lemma}\label{subshtuka of the same rank}
  Let $L$ be an algebraically closed field. Let $\mathscr{F}$ be a shtuka of rank $d$ over $\Spec L$ with zero and pole satisfying condition (\ref{Frobenius shifts of zero and pole mutually disjoint}). Let $\mathscr{G}$ be a subshtuka of $\mathscr{F}$ of the same rank with the same zero and pole. Then $\mathscr{F}/\mathscr{G}$ is supported on $D\otimes L$ for some finite subscheme $D\subset X$. Moreover, for any structure of level $D$ on $\mathscr{F}$, $\mathscr{G}$ is obtained from $\mathscr{F}$ by applying Construction E in \cref{(Section)general constructions for shtukas} with respect to that level structure and an $\mathscr{O}_D$-submodule $\mathscr{R}\subset\mathscr{O}_D^d$.
\end{Lemma}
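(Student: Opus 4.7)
Write $\mathscr{H} := \mathscr{F}/\mathscr{G}$, a coherent torsion sheaf on $X \otimes L$. The plan is to establish a canonical isomorphism $\Phi^*\mathscr{H} \simto \mathscr{H}$ coming from the shtuka structures on $\mathscr{F}$ and $\mathscr{G}$, then apply \cref{descent of coherent sheaves for projective scheme over finite field} to descend $\mathscr{H}$ to a coherent sheaf $\mathscr{H}_0$ on $X$; the second assertion will then follow by descending the corresponding quotient map under the given level-$D$ trivialization $\iota$.

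The crux is to show that $\supp\mathscr{H}$ lies in $U := X \otimes L \setminus (\Gamma_\alpha \cup \Gamma_\beta)$. Fix $x \in \Gamma_\beta \setminus \Gamma_\alpha$ and suppose $\length\mathscr{H}_x = a \ge 1$. Inside $\mathscr{F}(\beta)_x$ we have the two chains $(\Phi^*\mathscr{G})_x \subset (\Phi^*\mathscr{F})_x \subset \mathscr{F}(\beta)_x$ and $(\Phi^*\mathscr{G})_x \subset \mathscr{G}(\beta)_x \subset \mathscr{F}(\beta)_x$. The shtuka definition for $\mathscr{F}$ (resp. $\mathscr{G}$) forces $\length(\mathscr{F}(\beta)_x/(\Phi^*\mathscr{F})_x) = d - 1$ (resp. $\length(\mathscr{G}(\beta)_x/(\Phi^*\mathscr{G})_x) = d - 1$), while $\length(\mathscr{F}(\beta)_x/\mathscr{G}(\beta)_x) = a$. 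A direct length count then yields $\length((\Phi^*\mathscr{F})_x/(\Phi^*\mathscr{G})_x) = a$, i.e. $\length(\Phi^*\mathscr{H})_x = a$. Since $L$ is perfect, $\Phi$ is an automorphism of $X \otimes L$, so $\Phi^\sharp$ canonically identifies $(\Phi^*\mathscr{H})_x$ with $\mathscr{H}_{\Phi(x)}$, giving $\length\mathscr{H}_{\Phi(x)} = a$ where $\Phi(x) = \Gamma_{\beta \bcirc \Fr_L^{-1}}$. By condition (\ref{Frobenius shifts of zero and pole mutually disjoint}), all iterates $\Phi^n(x)$ for $n \ge 1$ lie in $U$, where the shtuka maps of $\mathscr{F}$ and $\mathscr{G}$ are local isomorphisms; the same identification then propagates to $\length\mathscr{H}_{\Phi^n(x)} = a$ for all $n \ge 0$. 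Because $L$ is algebraically closed with $\Fr_L$ of infinite order on the image of $\beta^*: \mathscr{O}_X \to L$, the points $\Phi^n(x)$ are pairwise distinct, contradicting the finite support of $\mathscr{H}$. A symmetric calculation at $y \in \Gamma_\alpha \setminus \Gamma_\beta$, using the inclusions $(\Phi^*\mathscr{G})_y \subset (\Phi^*\mathscr{F})_y \subset \mathscr{F}_y$ with cokernels of length $1$ each, rules out support at $\Gamma_\alpha$.

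With $\supp\mathscr{H} \subset U$, the shtuka maps restrict to isomorphisms $\Phi^*\mathscr{F}|_U \simto \mathscr{F}|_U$ and $\Phi^*\mathscr{G}|_U \simto \mathscr{G}|_U$, hence $\Phi^*\mathscr{H}|_U \simto \mathscr{H}|_U$; this extends uniquely to a global isomorphism $\Phi^*\mathscr{H} \simto \mathscr{H}$ since both sides vanish outside $U$. Applying \cref{descent of coherent sheaves for projective scheme over finite field} to the projective scheme $X$ produces $\mathscr{H}_0$ coherent on $X$ with $\mathscr{H} \cong \mathscr{H}_0 \otimes L$, and since $\mathscr{H}$ is torsion, $\supp\mathscr{H}_0 =: D$ is finite in $X$ and $\supp\mathscr{H} = D \otimes L$. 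For the level-structure part, fix $\iota: \mathscr{F} \otimes \mathscr{O}_{D \times L} \simto \mathscr{O}_{D \times L}^d$ compatible with Frobenius. The quotient $\mathscr{F} \otimes \mathscr{O}_{D \times L} \twoheadrightarrow \mathscr{H}_0 \otimes \mathscr{O}_L$ pushed through $\iota^{-1}$ gives a surjection $\mathscr{O}_{D \times L}^d \twoheadrightarrow \mathscr{H}_0 \otimes \mathscr{O}_L$ that is $\Phi$-equivariant (the Frobenius structure on the source coming from the trivialization, on the target from the descent of $\mathscr{H}_0$, and the equivariance following from the Frobenius compatibility of $\iota$ and of the shtuka morphism $\mathscr{F} \twoheadrightarrow \mathscr{H}$); hence by descent it arises from a surjection $\mathscr{O}_D^d \twoheadrightarrow \mathscr{H}_0$ over $\mathbb{F}_q$ with kernel some $\mathscr{O}_D$-submodule $\mathscr{R}$. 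Construction E applied to $(\mathscr{F}, \iota, \mathscr{R})$ then produces $\ker(\mathscr{F} \to \mathscr{H}) = \mathscr{G}$, as desired.

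The main obstacle I anticipate is the local length computation at $\Gamma_\beta$ (and symmetrically at $\Gamma_\alpha$): one must carefully organize the four lattices $(\Phi^*\mathscr{F})_x, (\Phi^*\mathscr{G})_x, \mathscr{F}_x, \mathscr{G}_x$ inside $\mathscr{F}(\beta)_x$ and exploit the fact that both shtukas have the same zero and pole, so their $(d-1)$-length contributions at $\Gamma_\beta$ cancel in the comparison. The ensuing infinite-$\Phi$-orbit argument relies essentially on condition (\ref{Frobenius shifts of zero and pole mutually disjoint}), which both keeps the iterates of $x$ inside $U$ and forces them to be pairwise distinct.
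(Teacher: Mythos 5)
Your proof is correct but organizes the argument quite differently from the paper. The paper introduces auxiliary sheaves $\mathscr{F}'=\Phi_L^*\mathscr{F}+\mathscr{F}$, $\mathscr{G}'=\Phi_L^*\mathscr{G}+\mathscr{G}$, invokes \cref{non-containment for subshtuka of the same rank} to get $\mathscr{F}\cap\mathscr{G}'=\mathscr{G}$ and $\Phi_L^*\mathscr{F}\cap\mathscr{G}'=\Phi_L^*\mathscr{G}$, hence injections $\mathscr{H}\hookrightarrow\mathscr{F}'/\mathscr{G}'\hookleftarrow\Phi_L^*\mathscr{H}$ which are isomorphisms by an $h^0$-count, producing the descent datum $\Phi_L^*\mathscr{H}\simto\mathscr{H}$ in one global stroke. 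You instead work stalkwise at the legs, using the determinant constraint on the shtuka maps to force $\length_x(\Phi^*\mathscr{H})=\length_x\mathscr{H}$ for $x\in\Gamma_\alpha\cup\Gamma_\beta$, and then an infinite-orbit propagation (which, like the paper's non-containment lemma, ultimately runs on condition $(\ast)$) to conclude that $\supp\mathscr{H}$ avoids the legs; the isomorphism $\Phi^*\mathscr{H}\simto\mathscr{H}$ then comes for free from the shtuka maps being local isomorphisms on the complement. Both routes feed into the same descent lemma (\cref{descent of coherent sheaves for projective scheme over finite field}). Your version is self-contained and makes the role of $(\ast)$ visible in the geometry of the orbit $\Phi^n(x)$; the paper's is shorter because it offloads the pointwise input into a reusable lemma. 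For the level-structure claim you descend the quotient map $\mathscr{O}_{D\otimes L}^d\twoheadrightarrow\mathscr{H}_0\otimes L$, while the paper descends the subobject $\mathscr{P}=\mathscr{G}/\mathscr{F}(-D\otimes L)\subset\mathscr{O}_{D\otimes L}^d$; these are equivalent.

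Two small points to tighten. First, at $y\in\Gamma_\alpha$ your displayed chain $(\Phi^*\mathscr{G})_y\subset(\Phi^*\mathscr{F})_y\subset\mathscr{F}_y$ ``with cokernels of length 1 each'' is not what you want: the length of $(\Phi^*\mathscr{F})_y/(\Phi^*\mathscr{G})_y$ is exactly the quantity being computed. The correct parallelogram is $(\Phi^*\mathscr{G})_y\subset\mathscr{G}_y\subset\mathscr{F}_y$ and $(\Phi^*\mathscr{G})_y\subset(\Phi^*\mathscr{F})_y\subset\mathscr{F}_y$, with the shtuka arrows $(\Phi^*\mathscr{G})_y\subset\mathscr{G}_y$ and $(\Phi^*\mathscr{F})_y\subset\mathscr{F}_y$ each of colength $1$; the length count then reads off $\length(\Phi^*\mathscr{H})_y=\length\mathscr{H}_y$. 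Second, when you extend the isomorphism $\Phi^*\mathscr{H}|_U\simto\mathscr{H}|_U$ by zero, you assert that ``both sides vanish outside $U$''; the vanishing of $\Phi^*\mathscr{H}$ on $\Gamma_\alpha\cup\Gamma_\beta$ is not a formal consequence of $\supp\mathscr{H}\subset U$ (since $\supp\Phi^*\mathscr{H}=\Phi^{-1}(\supp\mathscr{H})$, which a priori need not avoid the legs), but it does follow from the same stalkwise equality $\length_x(\Phi^*\mathscr{H})=\length_x\mathscr{H}=0$ you already established; it would be worth saying so explicitly.
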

\begin{proof}
  Let $\mathscr{F}'=\Phi_L^*\mathscr{F}+\mathscr{F},\mathscr{G}'=\Phi_L^*\mathscr{G}+\mathscr{G}$. \cref{non-containment for subshtuka of the same rank} shows that $\mathscr{F}\cap\mathscr{G}'=\mathscr{G}$ and $\Phi_L^*\mathscr{F}\cap\mathscr{G}'=\Phi_L^*\mathscr{G}$. Thus the morphisms $\mathscr{F}/\mathscr{G}\to\mathscr{F}'/\mathscr{G}'$ and $\Phi_L^*(\mathscr{F}/\mathscr{G})\to\mathscr{F}'/\mathscr{G}'$ are injective. The sheaves $\mathscr{F}/\mathscr{G}, \mathscr{F}'/\mathscr{G}', \Phi_L^*(\mathscr{F}/\mathscr{G})$ are torsion sheaves on $X\otimes L$, and we have $h^0(\mathscr{F}/\mathscr{G})=h^0(\mathscr{F}'/\mathscr{G}')=h^0(\Phi_L^*(\mathscr{F}/\mathscr{G}))$. Hence the morphisms $\mathscr{F}/\mathscr{G}\to\mathscr{F}'/\mathscr{G}'$ and $\Phi_L^*(\mathscr{F}/\mathscr{G})\to\mathscr{F}'/\mathscr{G}'$ are isomorphisms. So we have $\Phi_L^*(\mathscr{F}/\mathscr{G})\cong\mathscr{F}/\mathscr{G}$.
  \cref{descent of coherent sheaves for projective scheme over finite field} gives an isomorphism $\mathscr{F}/\mathscr{G}\cong\mathscr{M}\otimes L$ for some coherent sheaf $\mathscr{M}$ on $X$. Since $\mathscr{F}$ and $\mathscr{G}$ have the same rank, $\mathscr{M}$ is supported on a finite subscheme $D\subset X$.

  Equip $\mathscr{F}$ with a structure of level $D$. Let $\mathscr{P}=\mathscr{G}/\mathscr{F}(-D\otimes L)\subset\mathscr{F}/\mathscr{F}(-D\otimes L)\cong\mathscr{O}_{D\otimes L}^d$. Since $\mathscr{F}(-D\otimes L)$ is a subshtuka of $\mathscr{G}$, we get an isomorphism $\Phi_L^*\mathscr{P}\simto\mathscr{P}$ which is compatible with the natural isomorphism $\Phi_L^*\mathscr{O}_{D\otimes L}^d\simto\mathscr{O}_{D\otimes L}^d$. \cref{descent of coherent sheaves for projective scheme over finite field} gives an isomorphism $\mathscr{P}\cong\mathscr{R}\otimes L$ for some $\mathscr{O}_D$-submodule $\mathscr{R}\subset\mathscr{O}_D^d$. We see that $\mathscr{G}$ is obtained from $\mathscr{F}$ by applying Construction E with respect to $\mathscr{R}$.
\end{proof}

\begin{Lemma}\label{horospherical divisor of type I not contained in horocycles of type II}
Let $\mathscr{B}\in\Pic(X)$. Then for any $1\le i\le d-1, \mathscr{A}\in\mathfrak{Vect}_X^i$, the horocycle $\mathcal{Z}^{\II}_{\mathscr{A}}$ does not contain any irreducible component of the horocycle $\mathcal{Z}^{\I}_\mathscr{B}$.
\end{Lemma}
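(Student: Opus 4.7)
By \cref{codimension of horocycles of type I or type II}, $\dim\mathcal{Y}=2d-3$ and $\dim\mathcal{Z}^{\II}_\mathscr{A}=2d-2-\rank\mathscr{A}$, so the hypothetical containment $\mathcal{Y}\subset\mathcal{Z}^{\II}_\mathscr{A}$ forces $\rank\mathscr{A}=1$. I therefore take $\mathscr{A}\in\Pic(X)$, assume $\mathcal{Y}\subset\mathcal{Z}^{\II}_\mathscr{A}$ for contradiction, and choose $\eta:\Spec L\to\mathcal{Y}$ with dense image and $L$ algebraically closed, large enough to lift $\eta$ to both $\mathcal{R}^{\I}_\mathscr{B}(L)$ and $\mathcal{R}^{\II}_\mathscr{A}(L)$. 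This produces, for the same shtuka $\mathscr{F}$, two exact sequences of shtukas
\[0\to\mathscr{S}\to\mathscr{F}\to\mathscr{Q}\to 0,\qquad 0\to\mathscr{S}'\to\mathscr{F}\to\mathscr{Q}'\to 0,\]
with $\mathscr{Q}\in\mathcal{T}_\mathscr{B}(L)$ and $\mathscr{S}'\in\mathcal{T}_\mathscr{A}(L)$ both of rank one, and $\mathscr{S},\mathscr{Q}'$ of rank $d-1$. I then split the analysis on whether $\mathscr{S}'\subset\mathscr{S}$.

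If $\mathscr{S}'\subset\mathscr{S}$ (case A), the triple $(\mathscr{F},\mathscr{S}',\mathscr{S})$ lifts $\eta$ to $\mathcal{R}^{\I\wedge\II}_{\mathscr{A},\mathscr{B}}(L)$, a stack of dimension $2d-4<\dim\mathcal{Y}$ by \cref{dimension of horocycles of type I+II}, so case A cannot cover $\mathcal{Y}$ at the generic point. In the remaining case $\mathscr{S}'\not\subset\mathscr{S}$ (case B), $\mathscr{S}\cap\mathscr{S}'=0$ (being torsion-free of rank zero), and the nonzero composition $\mathscr{S}'\hookrightarrow\mathscr{F}\twoheadrightarrow\mathscr{Q}$ is a morphism between rank-one trivial shtukas; by \cref{descent of coherent sheaves for projective scheme over finite field} it descends to a nonzero and hence injective morphism $\mathscr{A}\to\mathscr{B}$ of line bundles on $X$. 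If no such morphism exists, case B is impossible and we are done. Otherwise $\mathscr{A}\cong\mathscr{B}(-D)$ for some effective divisor $D\ge 0$, and $\mathscr{N}:=\mathscr{S}\oplus\mathscr{S}'\subset\mathscr{F}$ is a locally free full-rank subshtuka with torsion quotient $\mathscr{F}/\mathscr{N}\cong(\mathscr{B}/\mathscr{A})\otimes L$ of length $\deg D$.

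To conclude, I bound the dimension of the case-B locus in $\mathcal{M}$. The decomposable shtuka $\mathscr{N}$ ranges, as $\mathscr{S}$ varies, over a locus of the moduli of rank-$d$ shtukas over $\Spec E$ (with zero $\alpha$, pole $\beta$, Euler characteristic $-\deg D$) of dimension at most $\dim\Sht_E^{d-1}=2d-4$. After fixing a level-$D$ structure on $\mathscr{F}$, \cref{subshtuka of the same rank} identifies $\mathscr{N}\subset\mathscr{F}$ with a discrete $\mathscr{O}_D$-submodule $\mathscr{R}\subset\mathscr{O}_D^d$; dually, applying the same lemma to $\mathscr{F}^\vee\subset\mathscr{N}^\vee$ in combination with Construction~C of \cref{(Section)general constructions for shtukas} identifies the super-shtukas $\mathscr{F}\supset\mathscr{N}$ with the prescribed torsion quotient with a discrete $\mathscr{O}_D$-submodule as well. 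Hence both projections of the moduli of such pairs $(\mathscr{N},\mathscr{F})$ are generically quasi-finite, and the image of the decomposable $\mathscr{N}$-locus in $\mathcal{M}$ has dimension at most $2d-4<2d-3=\dim\mathcal{Y}$, contradicting $\mathcal{Y}\subset\mathcal{Z}^{\II}_\mathscr{A}$.

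The main obstacle is the dual-Construction-E step in the final paragraph: one must verify carefully, using Construction~C in conjunction with \cref{subshtuka of the same rank}, that for a generic decomposable $\mathscr{N}$ the super-shtukas $\mathscr{F}\supset\mathscr{N}$ with quotient $(\mathscr{B}/\mathscr{A})\otimes L$ do form a discrete set. Once this quasi-finiteness is in hand the dimension count is immediate, so the essential work is a duality argument in the same spirit as the constructions developed in \cref{(Section)general constructions for shtukas}.
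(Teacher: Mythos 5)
Your proof is correct and rests on the same two pillars as the paper's: the dimension estimate for $\mathcal{R}^{\I\wedge\II}_{\mathscr{A},\mathscr{B}}$ from \cref{dimension of horocycles of type I+II} to rule out the case where the trivial sub meets the kernel of the projection onto the trivial quotient, followed by a rigidity-plus-dimension-count argument built on \cref{subshtuka of the same rank}. The difference is the choice of auxiliary decomposable shtuka. The paper forms the \emph{super}-shtuka $\mathscr{Q}\oplus(\mathscr{B}\otimes L)$ by combining the two quotient maps into an injection $\mathscr{F}\hookrightarrow\mathscr{Q}\oplus(\mathscr{B}\otimes L)$; then \cref{subshtuka of the same rank} applies \emph{directly} (with $\mathscr{F}$ itself playing the role of the subsheaf $\mathscr{G}$ in that lemma), so one reads off immediately that $\mathscr{F}$ is obtained from the decomposable shtuka by Construction~E with finitely many choices of $\mathscr{R}\subset\mathscr{O}_D^d$, yielding the bound $\dim\mathcal{Y}\le\dim\Sht_E^{d-1}=2d-4$. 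You instead form the \emph{sub}-shtuka $\mathscr{N}=\mathscr{S}\oplus\mathscr{S}'\subset\mathscr{F}$ and therefore need quasi-finiteness in the wrong direction for the statement of \cref{subshtuka of the same rank}; your fix — dualize via Construction~C and apply the lemma to $\mathscr{F}^\vee\subset\mathscr{N}^\vee$, where the zero $\beta$ and pole $\alpha$ still satisfy condition~$(\ast)$ — is sound (and note $D$ is genuinely fixed here since it is the support of $\mathscr{B}/\mathscr{A}$), but it adds a layer of work that the paper avoids by the more convenient choice of decomposition. A small inessential point: for the final contradiction you only need quasi-finiteness of the projection to the $\mathscr{N}$-side, not of both projections.
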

\begin{proof}
Let $\mathcal{Y}$ be an irreducible component of $\mathcal{Z}^{\I}_\mathscr{B}$. Suppose $\mathcal{Y}$ is contained in $\mathcal{Z}^{\II}_{\mathscr{A}}$ for some locally free sheaf $\mathscr{A}$ on $X$ with $1\le\rank\mathscr{A}\le d-1$. For dimensional reasons, we have $\rank\mathscr{A}=1$. Choose a morphism $\eta:\Spec L\to\mathcal{Y}$, where $L$ is an algebraically closed field containing $E$, such that its image is dense in $\mathcal{Y}$. We can assume that $L$ is large enough so that $\eta$ has lifts to $\mathcal{R}^{\II}_\mathscr{A}$ and $\mathcal{R}^{\I}_{\mathscr{B}}$. Then we get two exact sequences for the shtuka $\mathscr{F}$ over $\Spec L$
\[\begin{tikzcd}
  0\arrow[r]&\mathscr{A}\otimes L\arrow[r]&\mathscr{F}\arrow[r]&\mathscr{Q}\arrow[r]&0
\end{tikzcd}\]
\[\begin{tikzcd}
  0\arrow[r]&\mathscr{S}\arrow[r]&\mathscr{F}\arrow[r]&\mathscr{B}\otimes L\arrow[r]&0
\end{tikzcd}\]
where $\mathscr{Q},\mathscr{S}\in\Sht_E^{d-1}(L)$.

By \cref{dimension of horocycles of type I+II}, $\mathcal{R}^{\I\wedge\II}_{\mathscr{A},\mathscr{B}}$ has dimension $2d-4$ over $\Spec E$, so its image in $\mathcal{M}$ does not contain the image of $\eta$. Therefore the composition $\mathscr{A}\otimes L\to\mathscr{F}\to\mathscr{B}\otimes L$ is nonzero, hence injective. So the morphism $\mathscr{F}\to\mathscr{Q}\oplus(\mathscr{B}\otimes L)$ is injective, and its cokernel is supported on $D\otimes L$ for some finite subscheme $D\subset X$ by \cref{subshtuka of the same rank}. Since $L$ is algebraically closed, we can equip $\mathscr{Q}\oplus(\mathscr{B}\otimes L)$ with a structure of level $D$. By \cref{subshtuka of the same rank}, $\mathscr{F}$ is obtained from $\mathscr{Q}\oplus(\mathscr{B}\otimes L)$ by applying construction E with respect to that level structure and an $\mathscr{O}_D$-submodule $\mathscr{R}\subset\mathscr{O}_D^d$.

Now we see that the image of $\eta$ is contained in the image of morphism $\Sht_E^{d-1}\to\Sht_E^d$ which sends a shtuka $\mathscr{G}$ of rank $d-1$ to the shtuka $\mathscr{G}\oplus(\mathscr{B}\otimes E)$. Since $\dim\Sht^{d-1}_E=2d-4=\dim\mathcal{Z}^{\I}_\mathscr{B}-1$, we get a contradiction.
\end{proof}

\begin{Lemma}\label{existence of very generic closed points on horospherical divisors}
Let $\mathscr{B}\in\Pic(X)$. Let $\mathsf{q}:\mathcal{M}'\to\mathcal{M}$ be a surjective \'etale morphism where $\mathcal{M}'$ is a scheme. Choose $\mathcal{W}$ to be an irreducible component of $\mathsf{q}^{-1}(\mathcal{Z}^{\I}_\mathscr{B})$.

Then for any open dense subscheme $\mathcal{U}$ of $\mathcal{W}$, we can find an extension field $F$ containing $E$ such that there exists an $F$-point on $\mathcal{U}\otimes_E F$ whose image in $\mathcal{M}\otimes_E F$ is contained in the image of the morphism $\mathcal{R}^{\I}_\mathscr{B}\otimes_E F\to\mathcal{M}\otimes_E F$, is not contained in $\mathcal{Z}^{\II}_\mathscr{A}\otimes_E F$ for any nonzero locally free sheaf $\mathscr{A}$ on $X$ with $1\le\rank\mathscr{A}\le d-1$, and is not contained in $\mathcal{Z}^{\I}_{\mathscr{B}'}\otimes_E F$ for any nonzero locally free sheaf $\mathscr{B}'$ on $X$ not isomorphic to $\mathscr{B}$ with $1\le\rank\mathscr{B}'\le d-1$.
\end{Lemma}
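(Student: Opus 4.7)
The plan is to take $F$ to be the function field of $\mathcal{W}$ and to use the tautological $F$-point of $\mathcal{W}\otimes_E F$ induced by the generic point $\eta\colon\Spec F\to\mathcal{W}$. A single generic point of $\mathcal{W}$ automatically lies outside every proper closed subset, so this one $F$-point would simultaneously satisfy all the ``not contained'' conditions in the lemma, despite the fact that there are infinitely many horocycles $\mathcal{Z}^{\I}_{\mathscr{B}'}$ and $\mathcal{Z}^{\II}_\mathscr{A}$ to avoid.

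First I would gather the needed facts about $\mathcal{W}$. Since $\mathsf{q}$ is étale and $\mathcal{W}$ is irreducible (and reduced, because $\mathcal{Z}^{\I}_\mathscr{B}$ is reduced and étale pullback preserves reducedness), $\mathsf{q}(\mathcal{W})$ is open in its closure, which is an irreducible component $\mathcal{Z}_0$ of $\mathcal{Z}^{\I}_\mathscr{B}$. By \cref{horospherical divisor of type I not contained in other horocycles of type I,horospherical divisor of type I not contained in horocycles of type II}, $\mathcal{Z}_0$ is contained in neither $\mathcal{Z}^{\I}_{\mathscr{B}'}$ (for $\mathscr{B}'\not\cong\mathscr{B}$ with $1\le\rank\mathscr{B}'\le d-1$) nor $\mathcal{Z}^{\II}_{\mathscr{A}}$ (for $1\le\rank\mathscr{A}\le d-1$), so the intersections $\mathsf{q}^{-1}(\mathcal{Z}^{\I}_{\mathscr{B}'})\cap\mathcal{W}$ and $\mathsf{q}^{-1}(\mathcal{Z}^{\II}_{\mathscr{A}})\cap\mathcal{W}$ are proper closed subsets of $\mathcal{W}$ for all such $\mathscr{B}'$ and $\mathscr{A}$. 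Separately, the image of the finite-type morphism $\mathcal{R}^{\I}_\mathscr{B}\to\mathcal{M}$ is constructible in $\mathcal{M}$ by Chevalley's theorem, and being dense in its closure $\mathcal{Z}^{\I}_\mathscr{B}$ it must contain an open dense subset of each irreducible component---in particular the generic point of $\mathcal{Z}_0$.

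Next I would set $F=\kappa(\eta)$ and let $\tilde{\eta}\colon\Spec F\to\mathcal{W}\otimes_E F$ be the $F$-point produced from $\eta$ by the universal property of the fiber product. Because $\mathcal{U}$ is open dense in $\mathcal{W}$, $\eta$ factors through $\mathcal{U}$, so $\tilde{\eta}$ is an $F$-point of $\mathcal{U}\otimes_E F$. The composition $\mathsf{q}\bcirc\eta$ is the generic point of $\mathcal{Z}_0$, and the resulting $F$-point of $\mathcal{M}\otimes_E F$ lies in the image of $\mathcal{R}^{\I}_\mathscr{B}\otimes_E F\to\mathcal{M}\otimes_E F$, because images of morphisms commute with base change on the level of underlying sets and the generic point of $\mathcal{Z}_0$ already lies in the image of $\mathcal{R}^{\I}_\mathscr{B}\to\mathcal{M}$. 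Moreover, if the image of $\tilde{\eta}$ were contained in $\mathcal{Z}^{\I}_{\mathscr{B}'}\otimes_E F$, then $\eta$ would factor through $\mathsf{q}^{-1}(\mathcal{Z}^{\I}_{\mathscr{B}'})\cap\mathcal{W}$, forcing $\mathcal{W}$ to lie inside this proper closed subset---a contradiction; the same argument rules out containment in $\mathcal{Z}^{\II}_{\mathscr{A}}\otimes_E F$.

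The hard part, to the extent there is one, is conceptual: since the isomorphism classes of $\mathscr{A}$ and $\mathscr{B}'$ form an infinite collection, there is no direct way to pick a point of $\mathcal{U}$ over $E$ that avoids all the corresponding codimension-one bad loci simultaneously by a finite argument. Passing to the function field of $\mathcal{W}$ sidesteps this entirely, since a single generic point of $\mathcal{W}$ avoids \emph{every} proper closed subset automatically; the individual non-containment statements then reduce directly to the two lemmas already proved.
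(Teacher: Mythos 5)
Your proof is correct and takes essentially the same approach as the paper, whose proof consists of a single sentence asserting that the two lemmas yield the result once $F$ is taken to be the function field of $\mathcal{W}$; you have spelled out the details the paper leaves implicit, in particular the use of Chevalley's theorem to see that the image of $\mathcal{R}^{\I}_\mathscr{B}\to\mathcal{M}$ is constructible and hence contains the generic point of the relevant component $\mathcal{Z}_0$, and the observation that the infinitely many bad loci $\mathcal{Z}^{\I}_{\mathscr{B}'}$, $\mathcal{Z}^{\II}_\mathscr{A}$ are all avoided at once because a generic point lies outside every proper closed subset.
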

\begin{proof}
\cref{horospherical divisor of type I not contained in other horocycles of type I} and \cref{horospherical divisor of type I not contained in horocycles of type II} show that the statement is true if $F$ is the field of rational functions of $\mathcal{W}$.
\end{proof}

\subsection{Definition of Multiplicity}\label{section: definition of multiplicity}
Let $\pi:X\times\mathcal{M}\to X$ be the projection.

For a coherent sheaf $\mathscr{E}$ on $\mathcal{M}$ which is zero outside horospherical divisors, we can define its multiplicity at horospherical divisors as follows. Since $\mathcal{M}$ is a Deligne-Mumford stack by Corollary 6 of Section 3 of Chapter of~\cite{L.Lafforgue97}, we can choose an \'etale surjective morphism $\mathsf{q}:\mathcal{M}'\to\mathcal{M}$ where $\mathcal{M}'$ is a scheme. Choose $\mathcal{Y}$ to be an irreducible component of a horospherical divisor, and $\mathcal{W}$ to be an irreducible component of the inverse image of $\mathcal{Y}$ in $\mathcal{M}'$. Let $\eta$ denote the generic point of $\mathcal{W}$. The local ring $\mathscr{O}_{\mathcal{M}',\eta}$ is a discrete valuation ring and $\mathsf{q}^*\mathscr{E}$ restricts to a torsion $\mathscr{O}_{\mathcal{M}',\eta}$-module of finite length. We see that the length does not depend on the choice of $\mathcal{M}'$ and $\mathcal{W}$, and we define it to be the multiplicity of $\mathscr{E}$ at $\mathcal{Y}$, denoted by $\mult_\mathcal{Y}\mathscr{E}$.

By \cref{cohomology of irreducible shtukas}, the zero morphism $0\to R\pi_*\mathscr{F}$ is a quasi-isomorphism on the complement of the horocycles. Since horocycles have positive codimension, by Theorem 3 of~\cite{KM}, we get a canonical isomorphism $\det (R\pi_*\mathscr{F})\xrightarrow{\sim}\mathscr{O}_\mathcal{M}(-\mathcal{D})$, where
\[\mathcal{D}=-\sum_{\mathcal{Y}}(\mult_{\mathcal{Y}}(\pi_*\mathscr{F})-\mult_{\mathcal{Y}}(R^1\pi_*\mathscr{F}))\]
where $\mathcal{Y}$ runs through all horospherical divisors.
Since $\mathscr{F}$ is locally free, its direct image $\pi_*\mathscr{F}$ is torsion free. By \cref{cohomology of irreducible shtukas}, $\pi_*\mathscr{F}$ is zero on the complement of horocycles. Hence $\pi_*\mathscr{F}=0$. So we get
\[\mathcal{D}=\sum_{\mathcal{Y}}\mult_{\mathcal{Y}}(R^1\pi_*\mathscr{F})\]

\subsection{Mittag-Leffler condition for automorphisms of modules}
\begin{Definition}
Let $(S_n)_{n\ge 1}$ be an inverse system of sets with respect to the natural order on positive integers. We say that it satisfies the \emph{Mittag-Leffler condition} if for each for each $n\ge 1$, there exists $N\ge n$ such that the images of $S_i$ and $S_j$ in $S_n$ coincide for all $i,j\ge N$.
\end{Definition}

\begin{Lemma}
  If a inverse system of sets $(S_n)_{n\ge 1}$ satisfies the Mittag-Leffler condition, and each $S_n$ is nonempty, then $\varprojlim_n S_n$ is nonempty. \qed
\end{Lemma}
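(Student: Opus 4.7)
The plan is to identify a canonical subsystem $(T_n)_{n\ge 1}$ with $T_n\subset S_n$ on which all transition maps are surjective, and then build an element of $\varprojlim_n S_n$ by successive lifting starting from any element of $T_1$.

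First, for each $n\ge 1$, invoke the Mittag-Leffler hypothesis to pick $N=N(n)\ge n$ such that $\im(S_i\to S_n)=\im(S_j\to S_n)$ for all $i,j\ge N$; let $T_n\subset S_n$ denote this common image (``the eventual image''). Since $S_{N(n)}$ is nonempty and surjects onto $T_n$, each $T_n$ is nonempty.

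Next, I would verify that the transition map $S_{n+1}\to S_n$ carries $T_{n+1}$ onto $T_n$. Given $x\in T_n$, pick any $i\ge\max(N(n),N(n+1))$. By the definition of $T_n$, there exists $y\in S_i$ mapping to $x$; its image $z\in S_{n+1}$ lies in $\im(S_i\to S_{n+1})=T_{n+1}$, and by functoriality $z\mapsto x$ in $S_n$. Thus $T_{n+1}\twoheadrightarrow T_n$.

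Finally, construct a coherent sequence $(x_n)_{n\ge 1}$ by induction: choose any $x_1\in T_1$, and given $x_n\in T_n$, use the surjectivity just established to pick $x_{n+1}\in T_{n+1}$ lifting $x_n$. The resulting element lies in $\varprojlim_n T_n\subset\varprojlim_n S_n$, which is therefore nonempty. There is no substantive obstacle; the only subtlety is a mild appeal to dependent choice in the inductive construction, which is standard and harmless for countable inverse limits.
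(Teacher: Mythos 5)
Your proof is correct and is the standard argument for this fact; the paper omits the proof precisely because it is classical. Passing to the eventual images $T_n$, checking that the transition maps $T_{n+1}\to T_n$ are surjective, and then lifting inductively from $T_1$ is exactly the expected route, and all the steps you give are sound.
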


Let $A$ be a complete Noetherian local ring with maximal ideal $\mathfrak{m}$. Put $A_n=A/\mathfrak{m}^n$. Let $M$ be a finitely generated $A$-module and $M_n=M\otimes_A A_n$. Let $G_n$ be the group of automorphisms of $M_n$. Then we have natural group homomorphisms $G_{n+1}\to G_n$ for all $n\ge 1$.

\begin{Lemma}\label{Mittag-Leffler condition for automorphisms of modules}
The inverse system $(G_n)_{n\ge 1}$ satisfies the Mittag-Leffler condition. In particular, $\varprojlim_n G_n$ is nonempty.
\end{Lemma}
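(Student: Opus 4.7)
The plan is to reduce the Mittag--Leffler property for the (nonabelian) groups $G_n$ to a corresponding stability property for the endomorphism rings $E_n := \End_{A_n}(M_n)$, and then use Nakayama's lemma to lift invertibility from $\End_A(M)$ back to $\Aut_A(M)$. First, I would establish ML for $(E_n)$: since $A_n$ is Artinian and $M_n$ is finitely generated over $A_n$, the module $E_n$ has finite length over $A_n$; the transition maps $E_{n+1}\to E_n$ are $A$-linear, so the images $\im(E_i\to E_n)$ for $i\ge n$ form a descending chain of $A_n$-submodules of the finite-length module $E_n$, which must stabilize.

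Next I would identify the stable image. Via the adjunction $E_n\cong\Hom_A(M,M_n)$ and the fact that $M$ is $\mathfrak{m}$-adically complete (being finitely generated over the complete local ring $A$), one gets $\varprojlim_n E_n = \Hom_A(M,M) =: E$, so the stable value of $\im(E_i\to E_n)$ equals $\im(E\to E_n)$; denote it $E_n^\infty$.

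Now I would transfer this to automorphisms. Let $\phi\in G_n\cap E_n^\infty$ and choose a lift $\tilde\phi\in E$. The image of $\tilde\phi$ in $\End_{A/\mathfrak{m}}(M/\mathfrak{m} M)$ is the further reduction of $\phi$, which is a unit; by Nakayama's lemma, $\tilde\phi$ is surjective on $M$, and since $M$ is finitely generated over the Noetherian ring $A$, a surjective endomorphism is automatically bijective. Thus $\tilde\phi\in G := \Aut_A(M)$, so $\phi$ lifts to $G$ and hence to $G_i$ for every $i$. Taking $i$ large enough that $\im(E_i\to E_n)=E_n^\infty$, the chain
\[\im(G_i\to G_n)\subseteq G_n\cap\im(E_i\to E_n)=G_n\cap E_n^\infty=\im(G\to G_n)\subseteq\im(G_i\to G_n)\]
collapses to equalities, which is the ML condition for $(G_n)$. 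The ``in particular'' statement is then immediate from the preceding general lemma.

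The main obstacle is the invertibility-lifting step: a priori an element of $\varprojlim_n E_n$ could be a unit modulo every $\mathfrak{m}^n$ without itself being a unit of $E$, and one needs both Nakayama's lemma (for surjectivity) and the Hopfian property of finitely generated modules over Noetherian rings (for injectivity) to rule this out.
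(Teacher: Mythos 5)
Your proof is correct, and it takes a genuinely different route from the one in the paper. Both arguments begin the same way: establish the Mittag--Leffler condition for the endomorphism rings $(E_n)$ by the Artinian finiteness of $E_n$ over $A/\mathfrak{m}$. The divergence is in how one propagates invertibility back to the groups $G_n$. The paper stays at finite truncation levels: it forms the stable subalgebras $E'_n$, observes that the unit groups $U_n\subset E'_n$ satisfy $U_n\subset G_n$ and $\im(G_m\to G_n)\subset U_n$ for $m$ large, and then proves $U_{n+1}\to U_n$ is surjective by a local lifting argument --- lift a one-sided inverse $v_n$ of $u_n$ arbitrarily to $v_{n+1}\in E'_{n+1}$, note that the error $f = u_{n+1}v_{n+1}-1$ maps $M_{n+1}$ into $\mathfrak{m}^n M_{n+1}$, hence $f^2 = 0$ in $E_{n+1}$, hence $u_{n+1}v_{n+1}$ is a unit; repeat on the left. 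By contrast, you pass all the way to the inverse limit: you identify $\varprojlim_n E_n$ with $\End_A(M)$ using the $\mathfrak{m}$-adic completeness of $M$ and the adjunction $E_n\cong\Hom_A(M,M_n)$, use the standard fact that for a countable ML system the projection $\varprojlim_n E_n\to E_n$ has image exactly the stable image, and then show the lifted endomorphism is an automorphism by Nakayama (surjectivity from surjectivity mod $\mathfrak{m}$) and the Cayley--Hamilton/Vasconcelos fact that a surjective endomorphism of a finitely generated module over a commutative ring is bijective. Your route is conceptually appealing --- it pins down the stable image of $G_i$ in $G_n$ as precisely $\im(\Aut_A(M)\to G_n)$ --- and it explains cleanly why invertibility survives in the limit, which is the obstacle you correctly identify. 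The trade-off is that you use the completeness hypothesis essentially (to get $\varprojlim M_n = M$), and you invoke a larger stock of standard facts (surjectivity of ML limit projections, completion of finitely generated modules, Cayley--Hamilton), whereas the paper's elementary nilpotency computation is self-contained and in fact nowhere uses that $A$ is complete. Both proofs are valid; the paper's is leaner, yours more structural.
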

\begin{proof}
Let $E_n$ be the endomorphism algebra of $M_n$. We have natural homomorphisms $E_{n+1}\to E_n$ for all $n\ge 1$. Since $A$ is Noetherian and $M_n$ is finitely generated over $A$, $E_n$ has finite dimension over $A/\mathfrak{m}$. Hence $E_n$ is an Artinian $A_n$-module. So the inverse system $(E_n)_{n\ge 1}$ satisfies the Mittag-Leffler condition. For each $n$, let $E'_n$ be the image of $E_m$ in $E_n$ for sufficiently large $m$. We see that $E'_n$ is a subalgebra of $E_n$, and the natural homomorphisms $E'_{n+1}\to E'_n$ are surjective for all $n\ge 1$.

Let $U_n$ be the group of units of $E'_n$. Then $U_n$ is a subgroup of $G_n$, and the image of $G_m$ in $G_n$ is contained in $U_n$ for sufficiently large $m$.

Now it suffices to show that the homomorphism $U_{n+1}\to U_n$ is surjective for each $n\ge 1$. Let $u_n\in U_n$. We can find $v_n\in U_n$ such that $u_nv_n=1$. Since $E'_{n+1}\to E'_n$ is surjective, we can choose lifts $u_{n+1}, v_{n+1}\in E'_{n+1}$ of $u_n,v_n$. Then $f=u_{n+1}v_{n+1}-1$ satisfies $f(M_{n+1})\subset\mathfrak{m}^nM_{n+1}$. Thus $f^2(M_{n+1})\subset\mathfrak{m}^{2n}M_{n+1}=0$. Hence $u_{n+1}v_{n+1}(1-f)=1$. This shows that $u_{n+1}$ has a right inverse. Similarly, we can show that $u_{n+1}$ has a left inverse. Thus $u_{n+1}\in U_{n+1}$.
\end{proof}

\subsection{Multiplicity at Horospherical Divisors}
Choose a surjective \'etale morphism $\mathsf{q}:\mathcal{M}'\to\mathcal{M}$ where $\mathcal{M}'$ is a scheme. Let $\pi':X\times\mathcal{M}'\to\mathcal{M}'$ be the projection. Let $\mathscr{F}'$ be the shtuka over $\mathcal{M}'$.

Let $\mathscr{A},\mathscr{B}\in\Pic(X)$. Let $\mathcal{V}$ be an irreducible component of $\mathsf{q}^{-1}(\mathcal{Z}^{\II}_\mathscr{A})$. Let $\xi$ be the generic point of $\mathcal{V}$. Let $\mathcal{W}$ be an irreducible component of $\mathsf{q}^{-1}(\mathcal{Z}^{\I}_\mathscr{B})$. Let $\eta$ be the generic point of $\mathcal{V}$.

\begin{Proposition}\label{H^1 at horospherical divisors}
We have isomorphisms
\[(R^1\pi'_*\mathscr{F}')_\xi\cong \bigoplus_{i=0}^{h^0(\mathscr{A})-1}\mathscr{O}_{\mathcal{M}',\xi}/\mathfrak{m}_\xi^{q^i}\]
\[(R^1\pi'_*\mathscr{F}')_\eta\cong \bigoplus_{i=0}^{h^1(\mathscr{B})-1}\mathscr{O}_{\mathcal{M}',\eta}/\mathfrak{m}_\eta^{q^i}\]
where $(R^1\pi'_*\mathscr{F}')_\xi$ is the stalk of $R^1\pi'_*\mathscr{F}'$ at $\xi$, $\mathfrak{m}_\xi$ is the ideal of the restriction of $\mathcal{V}$ to $\Spec\mathscr{O}_{\mathcal{M}',\xi}$, $(R^1\pi'_*\mathscr{F}')_\eta$ is the stalk of $R^1\pi'_*\mathscr{F}'$ at $\eta$, $\mathfrak{m}_\eta$ is the ideal of the restriction of $\mathcal{W}$ to $\Spec\mathscr{O}_{\mathcal{M}',\eta}$.
\end{Proposition}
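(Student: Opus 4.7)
The plan is to reduce each stalk description to a formal-neighborhood computation at a well-chosen closed point of the horocycle and then apply \cref{deformation in both directions}.

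For the type I statement, I use \cref{existence of very generic closed points on horospherical divisors} to find (after enlarging $E$ to an algebraically closed extension $F$) a closed $F$-point $s\in\mathcal{W}$ whose shtuka $\mathscr{F}_s$ satisfies $\mathscr{F}_s/\mathscr{F}_s^{\I}\cong\mathscr{B}\otimes F$, $\mathscr{F}_s^{\II}=0$, and lies on no other horocycle. Since the statement is \'etale-local, I may replace $\mathcal{M}'$ by an \'etale neighborhood of $s$. Setting $A_N:=\hat{\mathscr{O}}_{\mathcal{M}',s}/\mathfrak{m}_s^N$, each $A_N$ lies in $\ArtAlg_F^{(n)}$ for $n\gg 0$, and $\mathscr{F}'|_{\Spec A_N}$ is a deformation of $\mathscr{F}_s$ satisfying the hypotheses of Section~5. \cref{deformation in both directions} then gives
\[
R^1\pi_{A_N,*}(\mathscr{F}'|_{A_N})\;\cong\;\bigoplus_{i=0}^{h^1(\mathscr{B})-1}\mathscr{O}_{A_N}/\mathscr{K}^{q^i},
\]
where $\mathscr{K}$ is the ideal of $(f^\natural)^{-1}(H^\sharp\times P^\flat)$.

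The crux is to identify $\mathscr{K}$ with the local defining ideal of $\mathcal{W}$ in $\hat{\mathscr{O}}_{\mathcal{M}',s}$. By construction, $H^\sharp$ parametrizes those upper modifications of $\tensor*[^\tau]{\mathscr{F}_s}{}$ at $\alpha$ whose defining line in $\tensor*[^\tau]{\mathscr{F}_s}{}(\alpha)/\tensor*[^\tau]{\mathscr{F}_s}{}$ lies in the kernel of the projection to $\mathscr{B}(\alpha)/\mathscr{B}$, i.e., precisely the modifications that extend the surjection $\mathscr{F}_s\twoheadrightarrow\mathscr{B}\otimes F$ to the modified sheaf. Through the universal-deformation correspondence of \cref{infinitesimal neighborhood of moduli space}, a deformation $\widetilde{\mathscr{F}}$ over $\Spec A_N$ lies in $\mathcal{R}^{\I}_\mathscr{B}(A_N)$ iff $f^\natural$ factors through $H^\sharp\times P^\flat$; hence $V(\mathscr{K})$ is the formal-local pullback of $\mathcal{Z}^{\I}_\mathscr{B}$. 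Since $s$ avoids every other horocycle and horocycles have codimension one by \cref{codimension of horocycles of type I or type II}, near $s$ this pullback equals the single irreducible component $\mathcal{W}$, and $\mathscr{K}$ generates the height-one prime defining $\mathcal{W}$. Localizing at this prime, passing to $N\to\infty$, and invoking faithful flatness of completion produces the asserted description of $(R^1\pi'_*\mathscr{F}')_\eta$.

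For the type II statement, the analogous argument applies after dualizing: by Construction~C, the dual $\mathscr{F}_s^\vee$ has zero $\beta$, pole $\alpha$, and satisfies $(\mathscr{F}_s^\vee)^{\II}=0$, $\mathscr{F}_s^\vee/(\mathscr{F}_s^\vee)^{\I}\cong(\mathscr{A}\otimes F)^\vee$. Construction~C identifies the moduli of shtukas with zero $\beta$ and pole $\alpha$ with $\mathcal{M}$, and under this identification the type II horocycle $\mathcal{Z}^{\II}_\mathscr{A}$ corresponds to the type I horocycle $\mathcal{Z}^{\I}_{\mathscr{A}^\vee}$. Running the type I argument in the dual picture produces a formal-neighborhood description of $R^1\pi_*\widetilde{\mathscr{F}^\vee}$, and relative Grothendieck--Serre duality for $\pi:X\times\Spec A\to\Spec A$ (whose relative dualizing sheaf is $\pi_X^*\omega_X$), combined with the Serre-duality identity $h^0(\mathscr{A})=h^1(\mathscr{A}^\vee\otimes\omega_X)$ on $X$, translates this into the stated description of $(R^1\pi'_*\mathscr{F}')_\xi$. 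The main obstacle is the moduli-theoretic identification of $\mathscr{K}$ with the local horocycle ideal, which requires carefully unwinding the definition of $H^\sharp$ in terms of the reducibility datum $\mathscr{F}_s/\mathscr{F}_s^{\I}\cong\mathscr{B}$; the duality translation for type II is a secondary difficulty, and a direct symmetric analog of \cref{deformation in both directions} with the roles of $\pi_*,R^1\pi_*$ and $\mathscr{F}^{\I},\mathscr{F}^{\II}$ exchanged would be a cleaner alternative.
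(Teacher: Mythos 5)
Your proposal takes a genuinely different route from the paper, and it contains two real gaps.

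\textbf{The route.} You run the argument from the closed point outward: apply \cref{deformation in both directions} at a very generic closed point $s$, try to identify the deformation-theoretic ideal $\mathscr{K}$ with the formal-local ideal of the horocycle $\mathcal{W}$, pass to the inverse limit, and localize to the generic point $\eta$. The paper runs the argument in the opposite direction. It first notes that $(R^1\pi'_*\mathscr{F}')_\eta$ is a finitely generated torsion module over the DVR $\mathscr{O}_{\mathcal{M}',\eta}$, hence of the form $\bigoplus_{i=0}^{h-1}\mathscr{O}_{\mathcal{M}',\eta}/\mathfrak{m}_\eta^{m_i}$ for \emph{some} $h$ and $m_i$; it then spreads this decomposition out to formal completions at closed points on an open dense subset of $\mathcal{W}$; finally it picks a very generic closed point, applies \cref{deformation in both directions} there, and \emph{compares} the two decompositions to force $h=h^1(\mathscr{B})$ and $m_i=q^i$. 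This comparison entirely replaces the step you flag as the main obstacle: the paper never needs to show that $\mathscr{K}$ cuts out the horocycle.

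\textbf{Gap 1: the identification of $\mathscr{K}$ with the horocycle ideal.} You assert that ``a deformation $\widetilde{\mathscr{F}}$ over $\Spec A_N$ lies in $\mathcal{R}^{\I}_\mathscr{B}(A_N)$ iff $f^\natural$ factors through $H^\sharp\times P^\flat$,'' and conclude that $V(\mathscr{K})$ is the formal-local pullback of $\mathcal{Z}^{\I}_\mathscr{B}$. Neither step is proved, and both are delicate. The horocycle $\mathcal{Z}^{\I}_\mathscr{B}$ is the \emph{closure} of the image of $\mathcal{R}^{\I}_\mathscr{B}\to\mathcal{M}$, not the image, so lying on $\mathcal{Z}^{\I}_\mathscr{B}$ is not literally ``lying in $\mathcal{R}^{\I}_\mathscr{B}(A_N)$.'' And even for the nonclosed statement, lying in $\mathcal{R}^{\I}_\mathscr{B}(A_N)$ requires a surjection of shtukas from $\widetilde{\mathscr{F}}$ to a \emph{trivial} shtuka over $A_N$; since $A_N$ is Artinian rather than an algebraically closed field, \cref{descent of coherent sheaves for projective scheme over finite field} does not apply, and one needs a separate argument that the trivial quotient deforms together with $\widetilde{\mathscr{F}}$ exactly when $f^\natural$ lands in $H^\sharp\times P^\flat$. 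You correctly identify this as ``the main obstacle,'' but you do not resolve it; the paper's comparison strategy is precisely the device that makes it unnecessary.

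\textbf{Gap 2: passage to the inverse limit.} The phrase ``passing to $N\to\infty$, and invoking faithful flatness of completion'' hides a genuine difficulty: \cref{deformation in both directions} gives, for each $N$, some isomorphism $R^1\pi'_*\mathscr{F}'\otimes A_N\cong\bigoplus_i A_N/\mathscr{K}^{q^i}$, but these isomorphisms are not canonically compatible as $N$ varies, and an inverse limit of non-compatible isomorphisms need not yield an isomorphism over $\hat{\mathscr{O}}_{\mathcal{M}',s}$. The paper handles this with \cref{Mittag-Leffler condition for automorphisms of modules} (a Mittag-Leffler argument for the inverse system of automorphism groups of the finite-level modules) followed by the Artin--Rees lemma. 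This step is essential and cannot be skipped.

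\textbf{Type II.} Your duality translation via Construction~C is broadly consonant with the paper's one-line appeal to Grothendieck--Serre duality, though you introduce extra moduli-theoretic structure (identifying $\mathcal{Z}^{\II}_\mathscr{A}$ with $\mathcal{Z}^{\I}_{\mathscr{A}^\vee}$ under $\mathscr{F}\mapsto\mathscr{F}^\vee$) that the paper avoids by dualizing only the final module statement. Your suggestion that a symmetric analogue of \cref{deformation in both directions} would be cleaner is reasonable, but it is not what the paper does.

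In short: the comparison-of-two-decompositions step is the heart of the paper's proof, and the compatibility-of-isomorphisms step is a necessary technical bridge; your proposal replaces the first with an unproved (and genuinely nontrivial) moduli-theoretic identification and omits the second.
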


\begin{Theorem}\label{divisor of determinant of cohomology of shtukas}
We have a canonical isomorphism $\det (R\pi_*\mathscr{F}_0)\xrightarrow{\sim} \mathscr{O}_\mathcal{M}(-\mathcal{D})$, where
\[\mathcal{D}=\sum_{[\mathscr{B}]\in \Pic X}\frac{q^{h^1(\mathscr{B})}-1}{q-1}\cdot\mathcal{Z}^{\I}_\mathscr{B}
+\sum_{[\mathscr{A}]\in\Pic X}\frac{q^{h^0(\mathscr{A})}-1}{q-1}\cdot\mathcal{Z}^{\II}_\mathscr{A}\]
\end{Theorem}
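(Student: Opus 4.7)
The plan is to reduce the theorem to the local computation \cref{H^1 at horospherical divisors} via the Knudsen--Mumford formalism set up in \cref{section: definition of multiplicity}. That subsection, using \cref{cohomology of irreducible shtukas} and torsion-freeness of $\pi_*\mathscr{F}_0$, already established $\det(R\pi_*\mathscr{F}_0)\simto\mathscr{O}_\mathcal{M}(-\mathcal{D})$ with $\mathcal{D}=\sum_\mathcal{Y}\mult_\mathcal{Y}(R^1\pi_*\mathscr{F}_0)\cdot\mathcal{Y}$, the sum running over horospherical divisors $\mathcal{Y}$. \cref{codimension of horocycles of type I or type II} gives that $\mathcal{Z}^{\I}_\mathscr{B}$ and $\mathcal{Z}^{\II}_\mathscr{A}$ have codimension $\rank\mathscr{B}$ and $\rank\mathscr{A}$ respectively, so only $\mathscr{B},\mathscr{A}\in\Pic X$ contribute. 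For these rank-$1$ divisors, \cref{H^1 at horospherical divisors} identifies the stalk of $R^1\pi'_*\mathscr{F}'$ at the generic point as $\bigoplus_{i=0}^{h-1}\mathscr{O}/\mathfrak{m}^{q^i}$; since the $i$-th summand has length $q^i$ over the discrete valuation ring, the total length is $\sum_{i=0}^{h-1}q^i=\tfrac{q^h-1}{q-1}$. Taking $h=h^1(\mathscr{B})$ in the type-I case and $h=h^0(\mathscr{A})$ in the type-II case then yields the stated formula for $\mathcal{D}$.

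The serious content is thus \cref{H^1 at horospherical divisors}, whose proof I would approach as follows. I would first invoke \cref{existence of very generic closed points on horospherical divisors} to pass (after enlarging the field to some $F$) to a generic $F$-point of an irreducible component of $\mathsf{q}^{-1}(\mathcal{Z}^{\I}_\mathscr{B})$ whose image in $\mathcal{M}$ lies in the image of $\mathcal{R}^{\I}_\mathscr{B}$ but avoids every other horocycle; \cref{criterion for irreducibility of shtuka using maximal trivial sub and quotient} then forces the shtuka $\mathscr{F}$ over $F$ at this point to satisfy $\mathscr{F}^{\II}=0$ and $\mathscr{F}/\mathscr{F}^{\I}\cong\mathscr{B}$, which is the exact setting of Section 5. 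By \cref{infinitesimal neighborhood of moduli space} the formal neighborhood of this point in $\mathcal{M}'$ is identified with that of $p^\natural$ in $P^\natural$, and \cref{deformation in both directions} then gives $R^1\pi_*\widetilde{\mathscr{F}}\cong\bigoplus_{i=0}^{h-1}\mathscr{O}/\mathscr{K}^{q^i}$ where $\mathscr{K}$ is the ideal sheaf of ${f^\natural}^{-1}(H^\sharp\times P^\flat)$. The symmetric case of $\mathcal{Z}^{\II}_\mathscr{A}$ follows by applying duality (Construction C of \cref{(Section)general constructions for shtukas}), which interchanges the two types of horocycles and swaps $h^0$ with $h^1$ via Serre duality on $X$.

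The main obstacle I anticipate is identifying the ideal $\mathscr{K}$ from \cref{deformation in both directions} with the actual (reduced) local equation of $\mathcal{Z}^{\I}_\mathscr{B}$, rather than merely an ideal supported on it. The point is that the distinguished hyperplane $H^\sharp\subset P^\sharp$, cut out by the kernel of the coboundary ${\pi_E}_*(\tensor[^\tau]{\mathscr{F}}{}(\alpha)/\tensor[^\tau]{\mathscr{F}}{})\to{\pi_E}_*(\mathscr{B}(\alpha)/\mathscr{B})$, should parametrize precisely those first-order upper modifications at $\alpha$ which lift the quotient $\mathscr{F}\twoheadrightarrow\mathscr{B}$, so that ${f^\natural}^{-1}(H^\sharp\times P^\flat)$ is the locus where the deformed shtuka still admits $\mathscr{B}$ as a trivial quotient, i.e.\ still lies in $\mathcal{Z}^{\I}_\mathscr{B}$. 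Verifying that this scheme-theoretic preimage equals the formal completion of $\mathcal{Z}^{\I}_\mathscr{B}$ at the chosen point (and in particular that $\mathscr{K}$ is radical) reduces, by the genericity of the point, to a smoothness/reducedness statement for $\mathcal{R}^{\I}_\mathscr{B}\to\mathcal{M}$ near the generic point of $\mathcal{Z}^{\I}_\mathscr{B}$, which is built from the smoothness of $\mathcal{R}^{\I}_\mathscr{B}$ over $\Sht^{d-1}_E\times[\Spec E/\Aut\mathscr{B}]$ recalled in the previous subsection.
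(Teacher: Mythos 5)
Your first paragraph, reducing the Theorem to \cref{H^1 at horospherical divisors} via the Knudsen--Mumford formalism and the codimension bound in \cref{codimension of horocycles of type I or type II}, matches the paper. Your outline for \cref{H^1 at horospherical divisors} also tracks the paper's route: pass to a very generic closed point via \cref{existence of very generic closed points on horospherical divisors} (the relevant consequence is not \cref{criterion for irreducibility of shtuka using maximal trivial sub and quotient} per se, but rather that avoiding the $\mathcal{Z}^{\II}_{\mathscr{A}}$ and $\mathcal{Z}^{\I}_{\mathscr{B}'}$ forces $\mathscr{F}_w^{\II}=0$ and $\mathscr{F}_w/\mathscr{F}_w^{\I}\cong\mathscr{B}$), apply \cref{infinitesimal neighborhood of moduli space} and \cref{deformation in both directions}, and deduce the type-II case by Grothendieck--Serre duality. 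You also correctly single out the crux: showing that $\mathscr{K}$ is the \emph{reduced} local equation of the horocycle.

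Where you diverge from the paper is in how that crux is handled. You propose to prove directly that ${f^\natural}^{-1}(H^\sharp\times P^\flat)$ is the formal completion of $\mathcal{Z}^{\I}_\mathscr{B}$ at $w$, invoking smoothness of $\mathcal{R}^{\I}_\mathscr{B}\to\Sht^{d-1}_E\times[\Spec E/\Aut\mathscr{B}]$. This is a stronger statement than what is actually needed, and verifying it would require arguing that $H^\sharp$ parametrizes exactly the first-order modifications under which the trivial quotient $\mathscr{B}$ survives, and then matching this with the image of $\mathcal{R}^{\I}_\mathscr{B}$; neither half is spelled out. The paper instead proceeds more indirectly: it first records the decomposition $(R^1\pi'_*\mathscr{F}')_\eta\cong\bigoplus\mathscr{O}_{\mathcal{M}',\eta}/\mathfrak{m}_\eta^{m_i}$ over the DVR at the generic point $\eta$ of $\mathcal{W}$ and spreads it out to an open dense $\mathcal{U}\subset\mathcal{W}$, obtaining an isomorphism over the completion $A_w$ with $\mathfrak{p}_w^{m_i}$; then, using \cref{deformation in both directions}, \cref{Mittag-Leffler condition for automorphisms of modules} to choose compatible isomorphisms at each truncation, and Artin--Rees to pass to the limit, it obtains a second decomposition $\bigoplus A_w/\mathfrak{a}_w^{q^i}$. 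The radicality of $\mathfrak{a}_w$ comes for free because $H^\sharp\times P^\flat$ is a \emph{smooth} hypersurface through $p^\natural$ in $P^\natural\cong\Spf A_w$, so $\mathfrak{a}_w$ is prime; the support comparison of the two decompositions forces $\mathfrak{a}_w=\mathfrak{p}_w$, and uniqueness of invariant factors over the DVR $(A_w)_{\mathfrak{p}_w}$ then gives $\{m_i\}=\{q^i\}$. This sidesteps any need to characterize the preimage of $H^\sharp\times P^\flat$ as the horocycle itself. The two steps your sketch omits --- the generic-point decomposition plus spreading out, and the Mittag-Leffler/Artin-Rees passage to the completion --- are exactly what lets the paper avoid your harder identification; without some version of them, the deformation-theoretic computation at the closed point $w$ does not by itself determine the length at the generic point $\eta$, which is what the multiplicity is.
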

\begin{proof}
In \cref{section: definition of multiplicity}, the multiplicity of $\mathcal{D}$ at $\mathcal{Z}^{\I}_\mathscr{B}$ is shown to be the multiplicity of $R^1\pi'_*\mathscr{F}'$ at $\mathcal{W}$, which by \cref{H^1 at horospherical divisors} is equal to
\[\sum_{i=0}^{h^1(\mathscr{B})-1}q^i=\frac{q^{h^1(\mathscr{B})}-1}{q-1}\]
Similarly, we get the multiplicity of $\mathcal{D}$ at $\mathcal{Z}^{\II}_\mathscr{A}$.
\end{proof}

\begin{proof}[Proof of \cref{H^1 at horospherical divisors}]
The generic point of $\Spec \mathscr{O}_{\mathcal{M}',\eta}$ is not contained in any horocycles. Hence $(R^1\pi'_*\mathscr{F}')_\eta$ is a torsion module over $\mathscr{O}_{\mathcal{M}',\eta}$ by \cref{cohomology of irreducible shtukas}. It is finitely generated since $\mathscr{F}'$ is coherent. Since $\mathcal{W}$ has codimension 1 in $\mathcal{M}'$, $\mathscr{O}_{\mathcal{M}',\eta}$ is a discrete valuation ring with maximal ideal $\mathfrak{m}_\eta$. We can thus choose an isomorphism
\[(R^1\pi'_*\mathscr{F}')_\eta\cong \bigoplus_{i=0}^{h-1} \mathscr{O}_{\mathcal{M}',\eta}/\mathfrak{m}_\eta^{m_i}\]
where $m_i>0$ for $i=1,2,\dots,h$. So there exists an open dense subscheme $\mathcal{U}$ of $\mathcal{W}$ such that for any $w\in\mathcal{U}$, we have an isomorphism
\begin{equation}\label{isomorphism 1 for H^1 at a general point of horospherical divisor}
R^1\pi'_*\mathscr{F}'\otimes_{\mathscr{O}_{\mathcal{M}'}}A_w\cong \bigoplus_{i=0}^{h-1}A_w/\mathfrak{p}_w^{m_i}
\end{equation}
where $A_w$ is the completion of the local ring of $\mathcal{M}'$ at $w$, and $\mathfrak{p}_w$ is the ideal of the restriction of $\mathcal{W}$ to $\Spec A_w$.

The statement is true if it is true when $E$ is replaced by an extension. So we can assume $F=E$ in \cref{existence of very generic closed points on horospherical divisors}. Thus we can pick an $E$-point $w$ in $\mathcal{U}$ such that the shtuka $\mathscr{F}_w$ over $w$ satisfies $\mathscr{F}_w/(\mathscr{F}_w)^{\I}\cong \mathscr{B}\otimes E$ and $(\mathscr{F}_w)^{\II}=0$, where $(\mathscr{F}_w)^{\I}$ and $(\mathscr{F}_w)^{\II}$ are as in \cref{maximal trivial sub and maximal trivial quotient}, i.e., $(\mathscr{F}_w)^{\II}$ is the maximal trivial subshtuka and $\mathscr{F}_w/(\mathscr{F}_w)^{\I}$ is the maximal trivial quotient shtuka.

Let $\mathfrak{m}_w$ be the maximal ideal of $A_w$. Put $A_{w,n}=A_w/\mathfrak{m}_w^n$. \cref{deformation in both directions} shows that when $n\ge q^{h^1(\mathscr{B})}$, there exists an isomorphism
\[R^1\pi'_*\mathscr{F}'\otimes_{\mathscr{O}_{\mathcal{M}'}}A_{w,n}\cong \bigoplus_{i=0}^{h^1(\mathscr{B})-1}A_{w,n}/\mathfrak{a}_{w,n}^{q^i}\]
where $\mathfrak{a}_{w,n}$ is the ideal of $A_{w,n}$ corresponding to $H^\sharp\times P^\flat$ in \emph{loc. cit}. By \cref{Mittag-Leffler condition for automorphisms of modules}, we can choose the above isomorphisms so that they are compatible with respect to the homomorphisms $A_{w,n+1}\twoheadrightarrow A_{w,n}$. Now Artin-Rees lemma implies that
\begin{equation}\label{isomorphism 2 for H^1 at a general point of horospherical divisor}
R^1\pi'_*\mathscr{F}'\otimes_{\mathscr{O}_{\mathcal{M}'}}A_w\cong \bigoplus_{i=0}^{h^1(\mathscr{B})-1}A_w/\mathfrak{a}_w^{q^i}
\end{equation}
where $\mathfrak{a}_w$ is an ideal of $A_w$.

Comparing isomorphisms (\ref{isomorphism 1 for H^1 at a general point of horospherical divisor}) and (\ref{isomorphism 2 for H^1 at a general point of horospherical divisor}), we conclude that $h=h^1(\mathscr{B})$ and $m_i=q^i$ up to a permutation of $i$. This gives the second isomorphism in the statement.

The first isomorphism follows from the second one and Grothendieck-Serre duality.
\end{proof}

\section{Construction of rational functions on the moduli scheme of shtukas with all level structures}
Let $\Sht_{\all}^d$ be the moduli scheme of shtukas of rank $d$ equipped with structures of all levels compatible with each other.

In this section, we review the construction of the action of $GL_d(\mathbb{A})/k^\times$ on $\Sht_{\all}^d$ and the Tate central extension of $GL_d(\mathbb{A})$. Then we use them to obtain equivariant structures on the determinant line bundle. Finally, in \cref{rational functions and principal divisors on the moduli scheme of shtukas}, we  construct some rational functions and obtain some principal divisors on the moduli scheme of shtukas with all level structures.

\subsection{The action of $GL_d(\mathbb{A})/k^\times$ on $\Sht_{\all}^d$}\label{Section: adelic action on the moduli scheme of shtukas}
We recall the (left) action of $GL_d(\mathbb{A})/k^\times$ on $\Sht_{\all}^d$ defined in Section 3 of~\cite{Drinfeld1}.

We first define an action of the semigroup $GL_d^-(\mathbb{A})=\{g\in GL_d(\mathbb{A})|g^{-1}\in\Mat_{d\times d}(O)\}$.

Let $\mathscr{F}\in\Sht_{\all}^d(S)$ and let $g\in GL_d^-(\mathbb{A})$. We have $g^{-1}O^d\subset O^d$. Choose an open ideal $I\subset O$ such that $I\cdot O^d\subset g^{-1}O^d$. Let $D\subset X$ be the subscheme corresponding to $I$, and let $\mathscr{R}\subset\mathscr{O}_D^d$ be the $\mathscr{O}_D$-submodule corresponding to the submodule $g^{-1}O^d/I\cdot O^d\subset(O/I)^d$. Applying Construction E in \cref{(Section)general constructions for shtukas} to $\mathscr{F}$ and $\mathscr{R}$, we obtain a right shtuka $\mathscr{F}'$, which does not depend on the choice of $I$.

Let $D'$ be a finite subscheme of $X$ and let $J\subset O$ be the ideal corresponding to $D'$. In order to equip $\mathscr{F}$ with a structure of level $D'$, we choose $I$ such that $I\cdot O^d\subset J\cdot h^{-1}O^d$. Then the composition $g^{-1}O^d/I\cdot O^d\to g^{-1}O^d/J\cdot g^{-1}O^d\xrightarrow{g}(O/J)^d$ is surjective. Applying Construction E', we obtain a structure of level $D'$ on $\mathscr{F}'$.

It is easy to see that the above construction induces an action of the semigroup $GL_d^-(\mathbb{A})$ on $\Sht_{(\alpha,\beta),\all}^{d}$.

On the other hand, we have a canonical isomorphism $\mathbb{A}^\times/k^\times=\varprojlim\limits_{D}\Pic_D$. Construction D' gives the action of $\Pic_D(X)$ on $\Sht_{\all}^d$. Passing to projective limit, we get an action of $\mathbb{A}^\times/k^\times$ on  $\Sht_{\all}^d$. One can check that this action on $\mathbb{A}^\times\cap GL_d^-(\mathbb{A})$ coincides with the restriction of the action of $GL_d^-(\mathbb{A})$.

Since $GL_d(\mathbb{A})=\mathbb{A}^\times\cdot GL_d^-(\mathbb{A})$, we obtain an action of $GL_d(\mathbb{A})/k^\times$ on $\Sht_{\all}^d$.

\subsection{A canonical central extension of $GL_d(\mathbb{A})$}
We borrow the definition of relative determinant from Section 4 of~\cite{AdCK}. In particular, for any $g_1,g_2\in GL_d(\mathbb{A})$, we define
\[\det\nolimits_{g_1O^d}^{g_2O^d}:=\det(g_1O^d/g_1O^d\cap g_2O^d)^*\otimes\det(g_2O^d/g_1O^d\cap g_2O^d).\]
For $g_1,g_2,g_3\in GL_d(\mathbb{A})$, we have a canonical isomorphism
\[\det\nolimits_{g_1O^d}^{g_2O^d}\otimes\det\nolimits_{g_2O^d}^{g_3O^d}\cong\det\nolimits_{g_1O^d}^{g_3O^d}.\]

As in Section 5 of~\cite{AdCK}, we define a central extension
\[\begin{tikzcd}
1 \arrow[r]& \mathbb{F}_q^\times \arrow[r]& \widehat{GL_d}(\mathbb{A}) \arrow[r]& GL_d(\mathbb{A}) \arrow[r]& 1.
\end{tikzcd}\]
Here $\widehat{GL_d}(\mathbb{A})=\{(g,a)\in GL_d(\mathbb{A})\times\det_{O^d}^{g O^d}|a\ne 0\}$. The group structure on $\widehat{GL_d}(\mathbb{A})$ is given by  $(g,a)(h,b)=(gh,a\cdot(gb))$. Note that $a\in\det_{O^d}^{gO^d}$ and $gb\in\det_{gO^d}^{ghO^d}$, so $a\cdot(gb)\in\det_{O^d}^{gO^d}\otimes\det_{gO^d}^{ghO^d}\cong\det_{O^d}^{ghO^d}$.

\subsection{A $\widehat{GL_d}(\mathbb{A})$-equivariant structure on the determinant of cohomology of shtukas}\label{(Section)equivariant structure on the determinant of cohomology of shtukas}

Let $\mathscr{F}_{\all}$ be the universal shtuka over $\Sht_{\all}^d$.

Let $\widehat{GL_d}(\mathbb{A})$ act on $\Sht_{\all}^d$ through its quotient $GL_d(\mathbb{A})/k^\times$.

We define a $\widehat{GL_d}(\mathbb{A})$-equivariant structure on $\mathscr{L}:=\det R\pi_*\mathscr{F}_{\all}$. In other words, for each $u\in\widehat{GL_d}(\mathbb{A})$ we define an isomorphism $\phi_u:u^*\mathscr{L}\xrightarrow{\sim}\mathscr{L}$ such that the cocycle condition $\phi_v\bcirc v^*(\phi_u)=\phi_{uv}$ holds for any $u,v\in\widehat{GL_d}(\mathbb{A})$.

First we define a group
\[\widetilde{GL_d}(\mathbb{A})=\{(g,\phi)\in GL_d(\mathbb{A})\times\Isom(g^*\mathscr{L},\mathscr{L})\}\]
The group structure is given by $(g,\phi)(h,\psi)=(gh,\psi\bcirc h^*(\phi))$. We have a projection homomorphism $\widetilde{GL_d}(\mathbb{A})\to GL_d(\mathbb{A})$.

Note that a $\widehat{GL_d}(\mathbb{A})$-equivariant structure on $\mathscr{L}$ is the same as a homomorphism $f:\widehat{GL_d}(\mathbb{A})\to\widetilde{GL_d}(\mathbb{A})$ such that the following diagram commutes.
\begin{equation}\label{diagram for an equivalent definition of equivariant structure}
\begin{tikzcd}[column sep=tiny]
  \widehat{GL_d}(\mathbb{A})\arrow[rr,"f"]\arrow[rd] && \widetilde{GL_d}(\mathbb{A})\arrow[ld]\\
  &GL_d(\mathbb{A})
\end{tikzcd}
\end{equation}

Let $\widehat{GL_d^-}(\mathbb{A})$ be the preimage of $GL_d^-(\mathbb{A})$ in $\widehat{GL_d}(\mathbb{A})$. We first construct a homomorphism $f^-:\widehat{GL_d^-}(\mathbb{A})\to\widetilde{GL_d}(\mathbb{A})$.

For any $g\in GL_d^-(\mathbb{A})$, from the definition of the action of $g$ in \cref{Section: adelic action on the moduli scheme of shtukas}, we get an exact sequence of coherent sheaves on $X\times\Sht_{\all}^d$
\[\begin{tikzcd}
0 \arrow[r]& g^*\mathscr{F}_{\all} \arrow[r]& \mathscr{F}_{\all} \arrow[r]& (O^d/g^{-1}O^d)\boxtimes\mathscr{O}_{\Sht_{\all}^d} \arrow[r]& 0,
\end{tikzcd}\]
which induces an isomorphism
\[g^*\mathscr{L}\otimes\det\nolimits_{g^{-1}O^d}^{O^d}\cong\mathscr{L}\]
of invertible sheaves on $\Sht_{\all}^d$. Thus we have an inclusion
\[(\det\nolimits_{g^{-1}O^d}^{O^d}-\{0\})\subset\Isom(g^*\mathscr{L},\mathscr{L}).\]
Moreover, for any $g,h\in GL_d^-(\mathbb{A})$, the natural isomorphism
\[\det\nolimits_{h^{-1}g^{-1}O^d}^{h^{-1}O^d}\otimes\det\nolimits_{h^{-1}O^d}^{O^d}\cong\det\nolimits_{h^{-1}g^{-1}O^d}^{O^d}\]
is compatible with the composition
\[\Isom(h^*g^*\mathscr{L},h^*\mathscr{L})\times \Isom(h^*\mathscr{L},\mathscr{L})\to\Isom(h^*g^*\mathscr{L},\mathscr{L})\]

Now we define $f^-:\widehat{GL_d^-}(\mathbb{A})\to\widetilde{GL_d}(\mathbb{A})$. For $u=(g,a)\in\widehat{GL_d^-}(\mathbb{A})$, we define $f^-(u)$ to be $(g,\phi_u)$, where $\phi_u:g^*\mathscr{F}_{\all}\xrightarrow{\sim}\mathscr{F}_{\all}$ is $g^{-1}a\in\det_{g^{-1}O^d}^{O^d}$. The above discussion shows that $f^-$ is a homomorphism.

For any $w\in\widehat{GL_d}(\mathbb{A})$ there exists $u_1,v_1,u_2,v_2\in\widehat{GL_d^-}(\mathbb{A})$ such that $w=u_1v_1^{-1}=v_2^{-1}u_2$. Thus $f$ extends uniquely to a homomorphism $f:\widehat{GL_d}(\mathbb{A})\to\widetilde{GL_d}(\mathbb{A})$ by the following lemma. It is easy to see that $f$ makes diagram (\ref{diagram for an equivalent definition of equivariant structure}) commute.

\begin{Lemma}
  Let $G$ be a group and let $G^-$ be a subset $G$ which forms a semigroup under the group structure of $G$. Suppose that for any $g\in G$, there exists $u_1,v_1,u_2,v_2\in G^-$ such that $g=u_1v_1^{-1}=v_2^{-1}u_2$. Then any homomorphism from $G^-$ to a group $H$ extends uniquely to a homomorphism from $G$ to $H$. \qed
\end{Lemma}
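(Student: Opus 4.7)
The plan is to define the extension $\tilde f \colon G \to H$ by right fractions: for any $g \in G$, choose a presentation $g = u v^{-1}$ with $u, v \in G^-$ (which exists by hypothesis) and set $\tilde f(g) := f(u) f(v)^{-1}$. Uniqueness of the extension is then immediate: if $\tilde f$ is any group homomorphism extending $f$, then $\tilde f(g) = \tilde f(u) \tilde f(v)^{-1} = f(u) f(v)^{-1}$, so the formula is forced.

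The key technical ingredient, which I would isolate first, is a right-Ore-style ``common denominator'' consequence of the hypothesis: for any $a, b \in G^-$, there exist $p, q \in G^-$ with $a q = b p$. To see this, apply the hypothesis to the element $b^{-1} a \in G$, writing it as $p q^{-1}$ with $p, q \in G^-$; then $a q = b p$, and both sides lie in $G^-$ by the semigroup assumption on $G^-$. With this in hand, well-definedness of $\tilde f$ is straightforward. If $u v^{-1} = u' v'^{-1}$ with $u, u', v, v' \in G^-$, pick $p, q \in G^-$ with $v' q = v p$; then $u' q = u v^{-1} v' q = u v^{-1} v p = u p$ as well, so
\[
f(u') f(v')^{-1} = f(u' q) f(v' q)^{-1} = f(u p) f(v p)^{-1} = f(u) f(v)^{-1},
\]
where the outer equalities come from inserting and cancelling $f(q) f(q)^{-1}$ and $f(p) f(p)^{-1}$. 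The extension property $\tilde f|_{G^-} = f$ follows by writing $g = (gv) v^{-1}$ for any $v \in G^-$ and any $g \in G^-$, which gives $\tilde f(g) = f(gv) f(v)^{-1} = f(g)$.

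For the homomorphism property, given $g = u_1 v_1^{-1}$ and $h = u_2 v_2^{-1}$ with $u_i, v_i \in G^-$, apply the common-denominator lemma to obtain $p, q \in G^-$ with $u_2 q = v_1 p$, equivalently $v_1^{-1} u_2 = p q^{-1}$. Then
\[
g h = u_1 v_1^{-1} u_2 v_2^{-1} = u_1 p q^{-1} v_2^{-1} = (u_1 p)(v_2 q)^{-1}
\]
with $u_1 p, v_2 q \in G^-$, so
\[
\tilde f(g h) = f(u_1 p) f(v_2 q)^{-1} = f(u_1) f(p) f(q)^{-1} f(v_2)^{-1} = f(u_1) f(v_1)^{-1} f(u_2) f(v_2)^{-1} = \tilde f(g) \tilde f(h),
\]
where the middle step uses $f(v_1) f(p) = f(u_2) f(q)$, which is $f$ applied to the identity $u_2 q = v_1 p$ in $G^-$.

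The main obstacle is really just identifying and exploiting the right Ore condition implicit in the hypothesis; once the ``common denominator'' lemma is in place, well-definedness and the homomorphism property are both direct manipulations of images under $f$ of equalities that already hold inside $G^-$. Note that the left-fraction half of the hypothesis is actually not needed for this argument; the symmetric right-fraction property suffices.
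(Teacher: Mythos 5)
The paper states this lemma with no proof (the \verb|\qed| follows the statement directly), so there is no argument of the author's to compare against; your job was to supply the omitted proof, and you have done so correctly. Your argument is the standard Ore--fraction extension: extract the right Ore (``common denominator'') property from the hypothesis by applying it to $b^{-1}a$, define $\tilde f(uv^{-1}) = f(u)f(v)^{-1}$, and verify well-definedness, the extension property, and multiplicativity by reducing every comparison to an equality that already holds inside $G^-$ and applying $f$ there. I checked each step: the well-definedness computation ($u'q = up$, $v'q = vp$ forcing $f(u')f(v')^{-1} = f(u)f(v)^{-1}$), the extension via $g = (gv)v^{-1}$, and the homomorphism property via $v_1^{-1}u_2 = pq^{-1}$ are all correct, and every product you feed to $f$ does lie in $G^-$ by the semigroup hypothesis. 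Your closing observation is also right and worth retaining: only the right-fraction half $g = u_1v_1^{-1}$ of the hypothesis is used; by an entirely symmetric argument the left-fraction half alone would also suffice, but one never needs both simultaneously for this conclusion (the target $H$ is already a group, so there is no need to build $G$ as a two-sided localization).
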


\subsection{Rational functions}
Let $GL_d(\mathbb{A})_0=\{g\in GL_d(\mathbb{A})|\deg g=0\}$. Let $\widehat{GL_d}(\mathbb{A})_0$ be the preimage of $GL_d(\mathbb{A})_0$ under the homomorphism $\widehat{GL_d}(\mathbb{A})\to GL_d(\mathbb{A})$.

Let $E$ be an algebraically closed field and let $\alpha,\beta:\Spec E\to X$ be two morphisms satisfying condition (\ref{Frobenius shifts of zero and pole mutually disjoint}).

Let $\mathcal{M}$  denote the moduli stack of shtukas $\mathscr{F}$ of rank $d$ over $\Spec E$ with zero $\alpha$ and pole $\beta$ satisfying $\chi(\mathscr{F})=0$. Let $\mathcal{M}_{\all}$ denote the moduli scheme of shtukas satisfying the same condition as $\mathcal{M}$ and equipped with structures of all levels compatible with each other.

Let $\mathscr{F}_{0,\all}$ denote the universal shtuka over $\mathcal{M}_{\all}$. Let $\pi:X\times\mathcal{M}_{\all}\to\mathcal{M}_{\all}$ be the projection. Put $\mathscr{L}_0=\det R\pi_*\mathscr{F}_{0,\all}$.

As in \cref{divisor of determinant of cohomology of shtukas}, let
\[\mathcal{D}=\sum_{[\mathscr{B}]\in \Pic X}\frac{q^{h^1(\mathscr{B})}-1}{q-1}\cdot\mathcal{Z}^{\I}_\mathscr{B}
+\sum_{[\mathscr{A}]\in\Pic X}\frac{q^{h^0(\mathscr{A})}-1}{q-1}\cdot\mathcal{Z}^{\II}_\mathscr{A}\]
be the divisor of $\mathcal{M}$ . Let $\mathcal{D}_{\all}$ be the pullback of $\mathcal{D}$ from $\mathcal{M}$ to $\mathcal{M}_{\all}$.

\begin{Proposition}
  We have a canonical isomorphism $\mathscr{L}_0\cong\mathscr{O}_{\mathcal{M}_{\all}}(-\mathcal{D}_{\all})$.
\end{Proposition}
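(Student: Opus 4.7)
The proposition is essentially a formal pullback of \cref{divisor of determinant of cohomology of shtukas} along the level-forgetting morphism. My plan is as follows. Let $q\colon \mathcal{M}_{\all}\to \mathcal{M}$ denote the canonical projection, realized as the inverse limit of the finite-level forgetful maps $\mathcal{M}_D\to\mathcal{M}$, where $D$ ranges over finite subschemes of $X$ disjoint from $\alpha$ and $\beta$. Each such finite-level map is representable and étale (it is a torsor under $GL_d(\mathscr{O}_D)$ modulo center), so $q$ is pro-étale, in particular flat; moreover the universal property gives a canonical identification $\mathscr{F}_{0,\all}\cong(\Id_X\times q)^*\mathscr{F}_0$.

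Next I would invoke the base change property of the Knudsen-Mumford determinant of cohomology from~\cite{KM} (which commutes with arbitrary base change, and in particular with the flat morphism $q$). Applied to $q$ and the universal shtuka, this yields a canonical isomorphism
\[\mathscr{L}_0=\det R\pi_*\mathscr{F}_{0,\all}\simto q^*\det R\pi_*\mathscr{F}_0.\]
Combined with the canonical isomorphism $\det R\pi_*\mathscr{F}_0\simto \mathscr{O}_{\mathcal{M}}(-\mathcal{D})$ of \cref{divisor of determinant of cohomology of shtukas}, and with the definition $\mathcal{D}_{\all}=q^*\mathcal{D}$, this produces the required canonical isomorphism $\mathscr{L}_0\cong \mathscr{O}_{\mathcal{M}_{\all}}(-\mathcal{D}_{\all})$.

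Since all substantive content has been established earlier, the remaining work is purely formal. Two minor points deserve attention. First, the isomorphism in \cref{divisor of determinant of cohomology of shtukas} is defined (as in \cref{section: definition of multiplicity}) via the quasi-isomorphism $0\simto R\pi_*\mathscr{F}_0$ on the complement of the horocycles, supplied by \cref{cohomology of irreducible shtukas}; under $q$, this complement pulls back to the complement of the horocycles in $\mathcal{M}_{\all}$, so the trivialization is preserved and the resulting isomorphism on $\mathcal{M}_{\all}$ is independent of the chosen presentation of $q$ as an inverse limit. Second, one must identify $q^*\mathscr{O}_{\mathcal{M}}(-\mathcal{D})$ with $\mathscr{O}_{\mathcal{M}_{\all}}(-q^*\mathcal{D})$, which holds because $q$ is flat and the horospherical divisors are Cartier on the smooth Deligne-Mumford stack $\mathcal{M}$. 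No step presents a serious obstacle; the argument is a clean formal consequence of the main multiplicity theorem.
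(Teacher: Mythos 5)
Your proof is correct and takes essentially the same route as the paper: the paper's proof is a single sentence citing \cref{divisor of determinant of cohomology of shtukas} together with "functoriality of $\det$" from~\cite{KM}, and your writeup simply unpacks what that functoriality means — pulling back the universal shtuka along the level-forgetting morphism $q$, base change for the Knudsen–Mumford determinant, and the identification $q^*\mathscr{O}_{\mathcal{M}}(-\mathcal{D})\cong\mathscr{O}_{\mathcal{M}_{\all}}(-q^*\mathcal{D})$ via flatness. The extra care you take about the pro-\'etale presentation of $q$ and the preservation of the trivialization off the horocycles is sound but not different in substance from what the paper leaves implicit.
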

\begin{proof}
  The statement follows from \cref{divisor of determinant of cohomology of shtukas} and functoriality of $\det$ in~\cite{KM}.
\end{proof}

The $\widehat{GL_d}(\mathbb{A})$-equivariant structure in \cref{(Section)equivariant structure on the determinant of cohomology of shtukas} induces a $\widehat{GL_d}(\mathbb{A})_0$-equivariant structure on $\mathscr{L}_0$. We obtain the following theorem.

\begin{Theorem}\label{rational functions and principal divisors on the moduli scheme of shtukas}
  For any two elements $u,v\in\widehat{GL_d}(\mathbb{A})_0$, there is a canonical isomorphism $u^*\mathscr{L}_0\simto v^*\mathscr{L}_0$. In particular, the divisor $g^*\mathcal{D}_{\all}-h^*\mathcal{D}_{\all}$ is principal for any $g,h\in GL_d(\mathbb{A})_0$. \qed
\end{Theorem}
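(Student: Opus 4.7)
The plan is to derive both assertions directly from the $\widehat{GL_d}(\mathbb{A})$-equivariant structure on $\mathscr{L}=\det R\pi_*\mathscr{F}_{\all}$ constructed in \cref{(Section)equivariant structure on the determinant of cohomology of shtukas}. First I would verify that $\widehat{GL_d}(\mathbb{A})_0$ preserves the component $\mathcal{M}_{\all}\subset\Sht_{\all}^d$ cut out by $\chi(\mathscr{F})=0$: the Euler characteristic of a shtuka changes under Construction D' by $d$ times the degree of the acting idele and under Construction E for $g\in GL_d^-(\mathbb{A})$ by $\deg g$, so vanishing of $\deg$ is exactly what stabilizes the $\chi=0$ locus. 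Consequently the global equivariant structure restricts to a collection $\{\phi_u:u^*\mathscr{L}_0\simto\mathscr{L}_0\}_{u\in\widehat{GL_d}(\mathbb{A})_0}$ on $\mathcal{M}_{\all}$ still satisfying the cocycle condition.

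Given this restricted equivariant structure, the first assertion is immediate: for $u,v\in\widehat{GL_d}(\mathbb{A})_0$ the composition
\[\phi_v^{-1}\bcirc\phi_u:u^*\mathscr{L}_0\simto v^*\mathscr{L}_0\]
serves as the canonical isomorphism. For the second assertion, choose any lifts $u,v\in\widehat{GL_d}(\mathbb{A})_0$ of $g,h\in GL_d(\mathbb{A})_0$ along the surjection $\widehat{GL_d}(\mathbb{A})_0\twoheadrightarrow GL_d(\mathbb{A})_0$ whose kernel is the central $\mathbb{F}_q^\times$. Since the action of $\widehat{GL_d}(\mathbb{A})_0$ on $\mathcal{M}_{\all}$ factors through $GL_d(\mathbb{A})_0/k^\times$, the pullback divisors satisfy $u^*\mathcal{D}_{\all}=g^*\mathcal{D}_{\all}$ and $v^*\mathcal{D}_{\all}=h^*\mathcal{D}_{\all}$. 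Combining this with the canonical identification $\mathscr{L}_0\cong\mathscr{O}_{\mathcal{M}_{\all}}(-\mathcal{D}_{\all})$ from the preceding proposition, the isomorphism from the first assertion yields
\[\mathscr{O}_{\mathcal{M}_{\all}}(-g^*\mathcal{D}_{\all})\cong u^*\mathscr{L}_0\cong v^*\mathscr{L}_0\cong\mathscr{O}_{\mathcal{M}_{\all}}(-h^*\mathcal{D}_{\all}),\]
so $g^*\mathcal{D}_{\all}-h^*\mathcal{D}_{\all}$ is principal.

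The only substantive point is the restriction step in the first paragraph; once the equivariant structure is known to descend to $\mathcal{M}_{\all}$, the rest is formal. The central $\mathbb{F}_q^\times$ in the extension creates no difficulty, because rescaling each $\phi_u$ by a constant from $\mathbb{F}_q^\times$ neither destroys the existence of an isomorphism $u^*\mathscr{L}_0\cong v^*\mathscr{L}_0$ nor alters the divisor class; it only rescales by an element of $\mathbb{F}_q^\times$ the explicit rational function obtained by interpreting $\phi_v^{-1}\bcirc\phi_u$ as a nowhere-vanishing section of $\mathscr{O}_{\mathcal{M}_{\all}}(g^*\mathcal{D}_{\all}-h^*\mathcal{D}_{\all})$. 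This last interpretation is precisely how the theorem, together with \cref{divisor of determinant of cohomology of shtukas}, furnishes the promised rational functions on $\mathcal{M}_{\all}$ realizing horospherical divisor differences as principal divisors.
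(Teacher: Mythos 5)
Your argument is correct and matches the paper's intended proof: the theorem is stated as an immediate consequence of the preceding two observations (the equivariant structure on $\mathscr{L}$ descends to a $\widehat{GL_d}(\mathbb{A})_0$-equivariant structure on $\mathscr{L}_0$, and $\mathscr{L}_0\cong\mathscr{O}_{\mathcal{M}_{\all}}(-\mathcal{D}_{\all})$), which is exactly the chain you spell out. Your additional checks --- that degree zero is what stabilizes the $\chi=0$ locus, and that the central $\mathbb{F}_q^\times$ only rescales the resulting rational function --- are correct and simply make explicit what the paper leaves implicit.
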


\bibliography{../../../Bibliography/Reference}
\bibliographystyle{amsplain}
\end{document}